\newtheorem{theorem}{Theorem}[section]
\newtheorem{problem}[theorem]{Problem}
\newtheorem{remark}[theorem]{Remark}
\newproof{proof}{Proof}
\newcommand{\RR}{{\if mm {\rm I}\mkern -3mu{\rm R}\else \leavevmode
\hbox{I}\kern -.17em\hbox{R} \fi}}
\newcommand{\bu}{\mbox{\boldmath{$u$}}}
\newcommand{\bt}{\mbox{\boldmath{$t$}}}
\newcommand{\bcero}{\mbox{\boldmath{$0$}}}
\newcommand{\bx}{\mbox{\boldmath{$\bx$}}}
\newcommand{\by}{\mbox{\boldmath{$\by$}}}
\newcommand{\bv}{\mbox{\boldmath{$v$}}}
\newcommand{\var}{\varepsilon}
\newcommand{\ba}{\mbox{\boldmath{$a$}}}
\newcommand{\fb}{{{f}}}
\newcommand{\bg}{\mbox{\boldmath{$g$}}}
\renewcommand{\by}{\mbox{\boldmath{$y$}}}
\renewcommand{\bx}{\mbox{\boldmath{$x$}}}
\newcommand{\be}{\mbox{\boldmath{$e$}}}
\newcommand{\bn}{\mbox{\boldmath{$n$}}}
\newcommand{\bh}{{{h}}}
\newcommand{\bbf}{\mbox{\boldmath{$f$}}}
\newcommand{\bbh}{\mbox{\boldmath{$h$}}}
\newcommand{\btheta}{\mbox{\boldmath{$\theta$}}}
\newcommand{\beeta}{\mbox{\boldmath{$\eta$}}}
\newcommand{\bxi}{\mbox{\boldmath{$\xi$}}}
\newcommand{\bTheta}{\mbox{\boldmath{$\Theta$}}}
\newcommand{\bUcal}{\mbox{\boldmath{$\mathcal{U}$}}}
\newcommand{\Gamae}{\Gamma^{\varepsilon }}
\newcommand{\dep}{(t,\cdot)}
\renewcommand{\d}{\partial}
\newcommand{\eij}{e_{i||j}}
\newcommand{\deij}{\dot{e}_{i||j}}
\newcommand{\ekl}{e_{k||l}}
\newcommand{\dekl}{\dot{e}_{k||l}}
\newcommand{\eab}{e_{\alpha||\beta}}
\newcommand{\est}{e_{\sigma||\tau}}
\newcommand{\dest}{\dot{e}_{\sigma||\tau}}
\newcommand{\estres}{e_{\sigma||3}}
\newcommand{\destres}{\dot{e}_{\sigma||3}}
\newcommand{\eatres}{e_{\alpha||3}}
\newcommand{\edtres}{e_{3||3}}
\newcommand{\deab}{\dot{e}_{\alpha||\beta}}
\newcommand{\deatres}{\dot{e}_{\alpha||3}}
\newcommand{\dedtres}{\dot{e}_{3||3}}
\newcommand{\gab}{\gamma_{\alpha\beta}}
\newcommand{\rab}{\rho_{\alpha\beta}}
\newcommand{\rst}{\rho_{\sigma\tau}}
\renewcommand{\a}{a^{\alpha\beta\sigma\tau}}
\renewcommand{\b}{b^{\alpha\beta\sigma\tau}}
\renewcommand{\c}{c^{\alpha\beta\sigma\tau}}
\newcommand{\aeps}{a^{\alpha\beta\sigma\tau,\varepsilon}}
\newcommand{\beps}{b^{\alpha\beta\sigma\tau,\varepsilon}}
\newcommand{\ceps}{c^{\alpha\beta\sigma\tau,\varepsilon}}
\renewcommand{\ae}{\ a.e. \ t\in(0,T)}
\newcommand{\aes}{\ a.e. \ \textrm{in} \ (0,T)}
\newcommand{\forallt}{\ \forall  \ t\in[0,T]}
\newcommand{\Upskl}{\Upsilon_{kl}}
\newcommand{\Upsij}{\Upsilon_{ij}}
\newcommand{\Upsab}{\Upsilon_{\alpha\beta}}
\newcommand{\Upsst}{\Upsilon_{\sigma\tau}}
\newcommand{\Upsdtres}{\Upsilon_{3 3}}
\newcommand{\dUpskl}{\dot{\Upsilon}_{kl}}
\newcommand{\dUpsij}{\dot{\Upsilon}_{ij}}
\newcommand{\dUpsab}{\dot{\Upsilon}_{\alpha\beta}}
\newcommand{\dUpsst}{\dot{\Upsilon}_{\sigma\tau}}
\newcommand{\dUpsdtres}{\dot{\Upsilon}_{3 3}}
\newcommand{\deb}{\rightharpoonup}
\newcommand{\en}{ \ \textrm{in} \ }
\newcommand{\on}{ \ \textrm{on} \ }
\newcommand{\into}{\int_{\omega}}
\newcommand{\intO}{\int_{\Omega}}
\newcommand{\ten}{(a^{\alpha \sigma}a^{\beta \tau} + a^{\alpha \tau}a^{\beta\sigma})}
\newcommand{\WLO}{H^1(0,T;L^2(\Omega))}
\newcommand{\WLOt}{H^1(0,T;[L^2(\Omega)]^3)}
\newcommand{\WLo}{H^1(0,T;L^2(\omega))}
\newcommand{\WHO}{H^1(0,T;H^1(\Omega))}
\newcommand{\WHOt}{H^1(0,T;[H^1(\Omega)]^3)}
\newcommand{\WHo}{H^1(0,T;H^1(\omega))}
\newcommand{\WVO}{H^1(0,T;V(\Omega))}
\newcommand{\LLO}{L^2(0,T;L^2(\Omega))}
\newcommand{\LLo}{L^2(0,T;L^2(\omega))}
\newcommand{\LHO}{L^2(0,T;H^1(\Omega))}
\newcommand{\LHo}{L^2(0,T;H^1(\omega))}
\newcommand{\WHMO}{H^1(0,T;H^{-1}(\Omega))}
\newcommand{\nWHOt}{_{H^1(0,T;[H^1(\Omega)]^3)}}
\begin{document}

\begin{frontmatter}

\title{Asymptotic Analysis of a  Viscoelastic Flexural Shell Model}


\author[mymainaddress]{G.~Casti\~neira\corref{mycorrespondingauthor}}
\ead{gonzalo.castineira@usc.es}
\author[mysecondaryaddress]{\'A. Rodr\'{\i}guez-Ar\'os}
\cortext[mycorrespondingauthor]{Corresponding author}
\ead{angel.aros@udc.es}

\address[mymainaddress]{Facultade de Matem\'aticas, Lope G\'omez de Marzoa, s/n.
              Campus sur, 15782,  Departamento de Matem\'atica Aplicada,
              Universidade de Santiago de Compostela, Spain }
\address[mysecondaryaddress]{ E.T.S. N\'autica e M\'aquinas
              Paseo de Ronda, 51, 15011, Departamento de Matem\'aticas, Universidade da Coru\~na, Spain}

\begin{abstract}
We consider a family of linearly viscoelastic shells with thickness $2\varepsilon$, clamped along a portion of their lateral face, all having the same middle surface $S=\btheta(\bar{\omega})\subset\mathbb{R}^3$, where $\omega\subset\mathbb{R}^2$ is a bounded and connected open set with a Lipschitz-continuous boundary $\gamma$. 
We show that, if the applied body force density is $O(\varepsilon^2)$ with respect to $\varepsilon$ and surface tractions density is $O(\var^3)$, the solution of the scaled variational problem in curvilinear coordinates, $\bu(\varepsilon)$, defined over the fixed domain $\Omega=\omega\times(-1,1)$, converges to a limit $\bu$ in $\WHOt$ as $\var\to 0$. Moreover, we prove that this limit is independent of the transverse variable. Furthermore, the average $\bar{\bu}= \frac1{2}\int_{-1}^{1}\bu dx_3$, which belongs to the space $H^{1}(0,T; V_F(\omega))$, where
\begin{align*}
V_F(\omega)&:= \{ \beeta=(\eta_i) \in H^1(\omega)\times H^1(\omega)\times H^2(\omega) ; 
\\& \qquad \eta_i=\d_\nu \eta_3=0 \ \textrm{on} \ \gamma_0, \gab(\beeta)=0 \en \omega \},
\end{align*}
satisfies   what we have identified as (scaled) two-dimensional equations of a viscoelastic flexural shell, which includes  a long-term memory that takes into account previous deformations. We finally provide convergence results which justify those equations.
\end{abstract}

\begin{keyword} 
 Asymptotic analysis \sep viscoelasticity \sep shells \sep flexural  \sep quasistatic problem.
\MSC[2010]  34K25 \sep  35Q74  \sep 34E05  \sep 34E10 \sep 41A60  \sep 74K25    \sep   74D05  \sep  35J15
\end{keyword}

\end{frontmatter}

\section{Introduction}\setcounter{equation}{0}

 In the last decades, many authors have applied the asymptotic methods in   three-dimensional elasticity problems in order to derive new reduced one-dimensional or two-dimensional models and justify the existing ones.  A complete theory regarding  elastic shells can be found in \cite{Ciarlet4b}, where models  for elliptic membranes, generalized membranes and flexural shells are presented. It contains a full description of the asymptotic procedure that leads to the corresponding sets of two-dimensional equations. Particularly,  the existence and uniqueness of solution of elastic elliptic membrane shell equations, can be found in \cite{CiarletLods} and in \cite{CiarletLods2}.  There, the two-dimensional elastic models are completely justified with convergence theorems. Besides, we can find the corresponding results for the elastic flexural shell problems in  \cite{CiarletLods4}.
 More recently in \cite{ArosObs,ArosObs2} the obstacle problem for an elastic elliptic membrane has been identified and justified as the limit problem for a family of unilateral contact problems of elastic elliptic shells by using asymptotic analysis. 

 However, a large number of actual physical and engineering problems have made it necessary the study of  models which  take into account effects such as hardening and memory of the material. An example of these are the  viscoelastic models (see for example \cite{DL,LC1990}). In some of these models, we can find   terms which  take into account the history of previous deformations or stresses of the body, known as  long-term memory. For a family of  shells made of a long-term memory viscoelastic material  we can find in  \cite{Viscoshells,Viscoshellsf,ViscoshellsK} the use of asymptotic analysis to justify with convergence results the limit two-dimensional membrane, flexural and Koiter equations.

  In this direction, to our knowledge, in \cite{intro2} we gave the first steps towards the justification of existing models of viscoelastic shells and  finding  new ones with the starting point being three-dimensional Kelvin-Voigt viscoelastic shell problems. By using the asymptotic expansion method, we found a rich variety of cases for the limit two-dimensional problems, depending on the geometry of the middle surface, the boundary conditions and the order of the applied forces. The most remarkable feature found was that from the asymptotic analysis of the three-dimensional problems  a long-term memory arose in the two-dimensional limit problems, represented by an integral with respect to the time variable. The aim of this paper is to mathematically justify these equations that we  identified in \cite{intro2} as the viscoelastic flexural shell problem, by presenting rigorous convergence results.

In this work we  justify  the two-dimensional equations of a viscoelastic flexural shell  where the  the boundary condition of place is considered in a portion lateral face of the shell:  
\begin{problem}\label{problema_ab_eps}
 Find $\bxi^\var:[0,T] \times\omega \longrightarrow \mathbb{R}^3$ such that,
    \begin{align*}\nonumber
    & \bxi^\var(t,\cdot)\in V_F(\omega), \forallt,\\ \nonumber
   &\frac{\var^3}{3}\int_{\omega} \aeps\rst(\bxi^\var(t))\rab(\beeta)\sqrt{a}dy +\frac{\var^3}{3}\int_{\omega}\beps\rst(\dot{\bxi}^\var(t))\rab(\beeta)\sqrt{a}dy
   \\\nonumber
    & \qquad- \frac{\var^3}{3}\int_0^te^{-k(t-s)}\into \ceps \rst(\bxi^\var(s))\rab(\beeta)\sqrt{a}dy ds 
    \\ 
   &\quad=\int_{\omega}p^{i,\var}(t)\eta_i\sqrt{a}dy \ \forall \beeta=(\eta_i)\in V_F(\omega), \aes,
    \\
    &\bxi^\var(0,\cdot)=\bxi^\var_0(\cdot),
   \end{align*}
   where,
   \begin{align*}
   & p^{i,\var}(t):=\int_{-\var}^{\var}\fb^{i,\var}(t)dx_3^\var +h_+^{i,\var}(t)+h_-^{i,\var}(t) \ \textrm{and} \ h_{\pm}^{i,\var}(t)=\bh^{i,\var}(t,\cdot,\pm \var),
   \end{align*}
  and where the contravariant components of the fourth order two-dimensional tensors $\aeps,$ $\beps, $ $\ceps$ are defined as  rescaled versions of two-dimensional fourth order tensors that we shall recall later in (\ref{tensor_a_bidimensional})--(\ref{tensor_c_bidimensional}). 
\end{problem}

 In what follows, we shall prove that the scaled three-dimensional unknown, $\bu(\var)$, converges as the small parameter $\var$ tends to zero to a limit, $\bu$,  independent of the transversal variable. Moreover, we find that this limit can be identified with  transversal average, $\bar{\bu}$, for all point of the middle surface of the shell. Furthermore, we prove that $\bar{\bu}$ is the unique solution of the Problem \ref{problema_ab_eps}, hence, the limit of the scaled unknown can be also identified with the solution of the two-dimensional problem, $\bxi$, defined over the middle surface of the shell. 
 
 We will follow the notation and style of \cite{Ciarlet4b}, where the linear elastic shells are studied.
For this reason, we shall  reference auxiliary results which apply in the same manner to the viscoelastic case. One of the major differences with respect to previous works in elasticity, consists on the time dependence, that will lead to ordinary differential equations that need to be solved in order to find the zeroth-order approach of the solution.

The structure of the paper is the following: in Section \ref{problema} 
we shall recall the three-dimensional viscoelastic  problem in Cartesian coordinates and  then, considering the problem for a family of viscoelastic shells of thickness $2\var$, we formulate the problem in curvilinear coordinates. In Section \ref{seccion_dominio_ind} we will use a projection map into a reference domain independent of the small parameter $\var$, we will introduce the scaled unknowns and forces and we present the assumptions on coefficients. In Section \ref{preliminares} we recall some technical results which will be needed in what follows.   In Section \ref{seccion_convergencia}, first we recall the results in \cite{intro2}, where, in particular, the two-dimensional equations for a viscoelastic flexural shell were studied. Then, we present the convergence results when the small parameter $\var$ tends to zero, which is the main result of this paper. After that, we present the convergence results in terms of de-scaled unknowns. In Section \ref{conclusiones} we shall present some conclusions, including a comparison between the viscoelastic models and the elastic case studied in \cite{Ciarlet4b} and comment about the convergence results regarding other cases.

\section{The three-dimensional linearly viscoelastic shell problem}\setcounter{equation}{0} \label{problema}

 We denote $\mathbb{S}^d$, where $d=2,3$ in practice, the space of second-order symmetric tensors on $\mathbb{R}^d$, while \textquotedblleft$\ \cdot$ \textquotedblright will represent the inner product and $|\cdot|$  the usual norm in $\mathbb{S}^d$ and  $\mathbb{R}^d$. In  what follows, unless the contrary is explicitly written, we will use summation convention on repeated indices. Moreover, Latin indices $i,j,k,l,...$, take their values in the set $\{1,2,3\}$, whereas Greek indices $\alpha,\beta,\sigma,\tau,...$, do it in the set  $\{1,2\}$. Also, we use standard notation for the Lebesgue and Sobolev spaces. Also, for a time dependent function $u$, we denote $\dot{u}$ the first derivative of $u$ with respect to the time variable. Recall that  $"\rightarrow"$ denotes strong convergence, while  $"\rightharpoonup" $ denotes weak convergence.
 
 
 Let ${\Omega}^*$ be a domain of $\mathbb{R}^3$, with a Lipschitz-continuous boundary ${\Gamma^*}=\d{\Omega^*}$. Let ${\bx^*}=({x}_i^*)$ be a generic point of  its closure $\bar{\Omega}^*$ and let ${\d}^*_i$ denote the partial derivative with respect to ${x}_i^*$. Let $d\bx^*$ denote the volume element in $\Omega^*$,  $d\Gamma^*$ denote the area element along $\Gamma^*$ and  $\bn^*$ denote the unit outer normal vector along $\Gamma^*$. Finally, let $\Gamma^*_0$  and $\Gamma_1^*$ be subsets of $\Gamma^*$ such that $meas(\Gamma_0^*)>0$ and $\Gamma^*_0 \cap \Gamma_1^*=\emptyset.$ 
 
  The set $\Omega^*$ is the region occupied by a deformable body in the absence of applied forces. We assume that this body is made of a Kelvin-Voigt viscoelastic material, which is homogeneous and isotropic,  so that the material is characterized by its Lam\'e coefficients   $\lambda\geq0, \mu>0$ and its viscosity coefficients, $\theta\geq 0,\rho\geq 0$ (see for instance \cite{DL,LC1990,Shillor}).

 Let $T>0$ be the time period of observation. Under the effect of applied forces, the  body is deformed and we denote by $u_i^*:[0,T]\times \bar{\Omega}^*\rightarrow \mathbb{R}^3$ the Cartesian components of the displacements field, defined as $\bu^*:=u_i^* \be^{i}:[0,T]\times\bar{\Omega}^* \rightarrow \mathbb{R}^3$, where $\{\be^i\}$ denotes the Euclidean canonical basis in $\mathbb{R}^3$. 
 Moreover, we consider that the displacement field vanishes on the set $\Gamma^*_0$. Hence, the  displacements field $\bu^*=(u_i^*):[0,T]\times\Omega^*\longrightarrow \mathbb{R}^3$ is solution of the following three-dimensional problem in Cartesian coordinates.
 
 \begin{problem}\label{problema_mecanico}
 Find $\bu^*=(u_i^*):[0,T]\times\Omega^*\longrightarrow \mathbb{R}^3$ such that,
 \begin{align}\label{equilibrio}
 -\d_j^*\sigma^{ij,*}(\bu^*)&=f^{i,*} \en \Omega^*, \\\label{Dirichlet}
 u_i^*&=0 \on \Gamma^*_0, \\\label{Neumann}
 \sigma^{ij,*}(\bu^*)n_j^*&=h^{i,*} \on \Gamma_1^*,\\ \label{condicion_inicial} 
 \bu^*(0,\cdot)&=\bu_0^* \en \Omega^*,
 \end{align}
 where the functions
 \begin{align*}
 \sigma^{ij,*}(\bu^*):=A^{ijkl,*}e_{kl}^*(\bu^*)+ B^{ijkl,*}e_{kl}^*(\dot{\bu}^*),
 \end{align*}
 are the  components of the linearized stress tensor field and where the functions
  \begin{align*} 
  & A^{ijkl,*}:= \lambda \delta^{ij}\delta^{kl} + \mu\left(\delta^{ik}\delta^{jl} + \delta^{il}\delta^{jk}\right) , 
  \\ 
  & B^{ijkl,*}:= \theta \delta^{ij}\delta^{kl} + \frac{\rho}{2}\left(\delta^{ik}\delta^{jl} + \delta^{il}\delta^{jk}\right) , 
 \end{align*}
  are the  components of the three-dimensional elasticity and viscosity fourth order tensors, respectively, and 
 \begin{align*}
  e^*_{ij}(\bu^*):= \frac1{2}(\d^*_ju^*_{i}+ \d^*_iu^*_{j}),
 \end{align*}
 designates the  components of the linearized strain tensor associated with the displacement field $\bu^*$of the set $\bar{\Omega}^*$.
 \end{problem}
 We now proceed to describe the equations in Problem \ref{problema_mecanico}. Expression (\ref{equilibrio}) is the equilibrium equation, where $f^{i,*}$ are the  components of the volumic force densities. The equality (\ref{Dirichlet}) is the Dirichlet condition of place, (\ref{Neumann}) is the Neumann condition, where $h^{i,*}$ are the  components of surface force densities and (\ref{condicion_inicial}) is the initial condition, where $\bu_0^*$ denotes the initial  displacements.
 
  Note that, for the sake of briefness, we omit the explicit dependence on the space and time variables when there is no ambiguity. Let us define the space of admissible unknowns,
 \begin{align*} 
 V(\Omega^*)=\{\bv^*=(v_i^*)\in [H^1(\Omega^*)]^3; \bv^*=\mathbf{\bcero} \ on \ \Gamma_0^*  \}.
 \end{align*}
 Therefore, assuming enough regularity,  the unknown  $\bu^*=(u_i^*)$ satisfies the following variational problem in Cartesian coordinates:
\begin{problem}\label{problema_cartesian}
Find $\bu^*=(u_i^*):[0,T]\times {\Omega}^* \rightarrow \mathbb{R}^3$  such that, 
\begin{align*} 
  \displaystyle  \nonumber
  & \bu^*(t,\cdot)\in V(\Omega^*) \forallt,
  \\ \nonumber 
   &\int_{\Omega^*}A^{ijkl,*}e^*_{kl}(\bu^*)e^*_{ij}(\bv^*) dx^*+ \int_{\Omega^*} B^{ijkl,*}e^*_{kl}(\dot{\bu}^*)e_{ij}^*(\bv^*)   dx^*
  \\ 
 & \quad= \int_{\Omega^*} f^{i,*} v_i^*  dx^* + \int_{\Gamma_1^*} h^{i,*} v_i^*  d\Gamma^* \quad \forall \bv^*\in V(\Omega^*), \aes,
  \\\displaystyle 
  & \bu^*(0,\cdot)= \bu_0^*(\cdot).
\end{align*}
\end{problem} 

Let us consider that $\Omega^*$ is a viscoelastic shell of thickness $2\var$. Now, we shall express the equations of the Problem \ref{problema_cartesian} in terms of  curvilinear coordinates. Let $\omega$ be a domain of $\mathbb{R}^2$, with a Lipschitz-continuous boundary $\gamma=\d\omega$. Let $\by=(y_\alpha)$ be a generic point of  its closure $\bar{\omega}$ and let $\d_\alpha$ denote the partial derivative with respect to $y_\alpha$. 

Let $\btheta\in\mathcal{C}^2(\bar{\omega};\mathbb{R}^3)$ be an injective mapping such that the two vectors $\ba_\alpha(\by):= \d_\alpha \btheta(\by)$ are linearly independent. These vectors form the covariant basis of the tangent plane to the surface $S:=\btheta(\bar{\omega})$ at the point $\btheta(\by)=\by^*.$  We can consider the two vectors $\ba^\alpha(\by)$ of the same tangent plane defined by the relations $\ba^\alpha(\by)\cdot \ba_\beta(\by)=\delta_\beta^\alpha$, that constitute the contravariant basis. We define the unit vector, 
\begin{align}\label{a_3}
\ba_3(\by)=\ba^3(\by):=\frac{\ba_1(\by)\wedge \ba_2(\by)}{| \ba_1(\by)\wedge \ba_2(\by)|},
\end{align} 
 normal vector to $S$ at the point $\btheta(\by)=\by^*$, where $\wedge$ denotes vector product in $\mathbb{R}^3.$ 

We can define the first fundamental form, given as metric tensor, in covariant or contravariant components, respectively, by
\begin{align*}
a_{\alpha\beta}:=\ba_\alpha\cdot \ba_\beta, \qquad a^{\alpha\beta}:=\ba^\alpha\cdot \ba^\beta,
\end{align*}
  the second fundamental form, given as curvature tensor, in covariant or mixed components, respectively, by
\begin{align*}
b_{\alpha\beta}:=\ba^3 \cdot \d_\beta \ba_\alpha, \qquad b_{\alpha}^\beta:=a^{\beta\sigma} b_{\sigma\alpha},
\end{align*}
and the Christoffel symbols of the surface $S$ by
\begin{align*}
\Gamma^\sigma_{\alpha\beta}:=\ba^\sigma\cdot \d_\beta \ba_\alpha.
\end{align*}

The area element along $S$ is $\sqrt{a}dy=dy^*$ where 
\begin{align}\label{definicion_a}
a:=\det (a_{\alpha\beta}).
\end{align}

Let $\gamma_0$ be a subset  of  $\gamma$, such that $meas (\gamma_0)>0$. 
For each $\varepsilon>0$, we define the three-dimensional domain $\Omega^\varepsilon:=\omega \times (-\varepsilon, \varepsilon)$ and  its boundary $\Gamae=\d\Omega^\var$. We also define  the following parts of the boundary, 
\begin{align*}
\Gamma^\varepsilon_+:=\omega\times \{\varepsilon\}, \quad \Gamma^\varepsilon_-:= \omega\times \{-\varepsilon\},\quad \Gamma_0^\varepsilon:=\gamma_0\times[-\varepsilon,\varepsilon].
\end{align*}

Let $\bx^\varepsilon=(x_i^\varepsilon)$ be a generic point of $\bar{\Omega}^\varepsilon$ and let $\d_i^\var$ denote the partial derivative with respect to $x_i^\varepsilon$. Note that $x_\alpha^\varepsilon=y_\alpha$ and $\d_\alpha^\varepsilon =\d_\alpha$. Let $\bTheta:\bar{\Omega}^\varepsilon\rightarrow \mathbb{R}^3$ be the mapping defined by
\begin{align} \label{bTheta}
\bTheta(\bx^\varepsilon):=\btheta(\by) + x_3^\varepsilon \ba_3(\by) \ \forall \bx^\varepsilon=(\by,x_3^\varepsilon)=(y_1,y_2,x_3^\varepsilon)\in\bar{\Omega}^\varepsilon.
\end{align}

The next theorem shows that if the injective mapping $\btheta:\bar{\omega}\rightarrow\mathbb{R}^3$ is smooth enough, the mapping $\bTheta:\bar{\Omega}^\var\rightarrow\mathbb{R}^3$ is also injective for $\var>0$ small enough (see Theorem 3.1-1, \cite{Ciarlet4b}).

\begin{theorem}\label{var_0}
Let $\omega$ be a domain in $\mathbb{R}^2$. Let $\btheta\in\mathcal{C}^2(\bar{\omega};\mathbb{R}^3)$ be an injective mapping such that the two vectors $\ba_\alpha=\d_\alpha\btheta$ are linearly independent at all points of $\bar{\omega}$ and let $\ba_3$,  defined  in (\ref{a_3}). Then, there exists $\var_0>0$ such that $\forall \var_1$, $0<\var_1\leq \var_0$  and  the mapping $\bTheta:\bar{\Omega}_1 \rightarrow\mathbb{R}^3$ defined by
\begin{align*}
\bTheta(\by,x_3):=\btheta(\by) + x_3 \ba_3(\by) \ \forall (\by,x_3)\in\bar{\Omega}_1, \ \textrm{where} \ \Omega_1:=\omega\times(-\var_1,\var_1),
\end{align*}
is a $\mathcal{C}^1-$ diffeomorphism from $\bar{\Omega}_1$ onto $\bTheta(\bar{\Omega}_1)$ and $\det (\bg_1,\bg_2,\bg_3)>0$ in $\bar{\Omega}_1$, where $\bg_i:=\d_i\bTheta$. 
\end{theorem}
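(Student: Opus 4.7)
The plan is to follow the classical tubular neighborhood argument, which reduces to three ingredients: a Jacobian computation at $x_3=0$, local invertibility via the inverse function theorem, and a compactness/contradiction argument for global injectivity. Throughout, I would use that $\bar\omega$ is compact (since $\omega$ is bounded with Lipschitz boundary as stated in the abstract) and that $\btheta\in\mathcal{C}^2(\bar\omega;\mathbb{R}^3)$ so that $\ba_3\in\mathcal{C}^1(\bar\omega;\mathbb{R}^3)$.

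First, I would compute $\bg_\alpha=\d_\alpha\bTheta = \ba_\alpha + x_3\,\d_\alpha\ba_3$ and $\bg_3=\d_3\bTheta=\ba_3$. Evaluating at $x_3=0$ gives $\det(\bg_1,\bg_2,\bg_3)=\det(\ba_1,\ba_2,\ba_3)=\sqrt{a}$, which is bounded below by a positive constant on $\bar\omega$ by compactness, since $\ba_1,\ba_2$ are linearly independent everywhere and $\ba_3$ is the associated unit normal. Because $(\by,x_3)\mapsto \det(\bg_1,\bg_2,\bg_3)$ is continuous on $\bar\omega\times\mathbb{R}$, there exists $\var_0'>0$ and $c>0$ such that $\det(\bg_1,\bg_2,\bg_3)\geq c$ on $\bar\omega\times[-\var_0',\var_0']$. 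This both secures the positivity of the Jacobian and, via the inverse function theorem, gives local $\mathcal{C}^1$-invertibility of $\bTheta$ around every $(\by,0)$.

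Next, I would establish global injectivity on a slab of suitably small thickness by contradiction. Suppose no $\var_0$ works; then for every $n\in\mathbb{N}$ one can find $(\by_n,x_{3,n})\neq(\by_n',x_{3,n}')$ in $\bar\omega\times[-1/n,1/n]$ with $\bTheta(\by_n,x_{3,n})=\bTheta(\by_n',x_{3,n}')$. By compactness of $\bar\omega\times[-1,1]$, extract subsequences converging to $(\by_\infty,0)$ and $(\by_\infty',0)$. Continuity yields $\btheta(\by_\infty)=\btheta(\by_\infty')$, and injectivity of $\btheta$ on $\bar\omega$ forces $\by_\infty=\by_\infty'$. But by the local invertibility established above, $\bTheta$ is injective on some neighborhood $U$ of $(\by_\infty,0)$; for $n$ large enough both $(\by_n,x_{3,n})$ and $(\by_n',x_{3,n}')$ lie in $U$, contradicting our choice of them.

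A standard finite-cover refinement makes this uniform: for each $\by\in\bar\omega$ pick a neighborhood $U_{\by}\times(-r_{\by},r_{\by})$ on which $\bTheta$ is a $\mathcal{C}^1$-diffeomorphism onto its image; extract a finite subcover $\{U_{\by_k}\}$ of $\bar\omega$, and take $\var_0$ smaller than $\min_k r_{\by_k}$, than $\var_0'$, and than the thickness produced by the contradiction argument. Then for any $0<\var_1\leq\var_0$ the restriction $\bTheta|_{\bar\Omega_1}$ is a continuous injection from the compact set $\bar\Omega_1$ onto $\bTheta(\bar\Omega_1)$ with $\det(\bg_1,\bg_2,\bg_3)>0$, hence a homeomorphism; combined with the local $\mathcal{C}^1$-inverses it is a $\mathcal{C}^1$-diffeomorphism.

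The main obstacle is the global injectivity step: local invertibility follows immediately from the inverse function theorem, and positivity of the Jacobian from a continuity argument, but preventing distant points of $\bar\omega$ from being identified through the normal coordinate requires the contradiction/compactness argument outlined above and genuinely uses both the injectivity of $\btheta$ and the compactness of $\bar\omega$.
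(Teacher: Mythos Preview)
Your argument is correct and follows the classical tubular neighborhood proof: Jacobian positivity at $x_3=0$ extended by continuity and compactness, local invertibility from the inverse function theorem, and global injectivity via a compactness/contradiction argument exploiting the injectivity of $\btheta$ on $\bar\omega$. Note, however, that the paper does not supply its own proof of this statement; it is quoted from Theorem~3.1-1 of Ciarlet's \emph{Mathematical Elasticity, Vol.~III} (reference~\cite{Ciarlet4b} in the paper), and the proof there proceeds along exactly the lines you have written. One minor remark: the finite-cover refinement you add at the end is not needed, since the contradiction argument already produces a uniform $\var_0$ directly; but it does no harm.
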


As a consequence, for each $\var$, $0<\var\le\var_0$, the set $\bTheta(\bar{\Omega}^\var)=\bar{\Omega}^*$ is the reference configuration of a viscoelastic shell, with middle surface $S=\btheta(\bar{\omega})$ and thickness $2\varepsilon>0$.
Furthermore for $\varepsilon>0,$ $\bg_i^\varepsilon(\bx^\varepsilon):=\d_i^\varepsilon\bTheta(\bx^\varepsilon)$ are linearly independent and the mapping $\bTheta:\bar{\Omega}^\varepsilon\rightarrow \mathbb{R}^3$ is injective for all $\var$, $0<\var\le\var_0$, as a consequence of injectivity of the mapping $\btheta$. Hence, the three vectors $\bg_i^\varepsilon(\bx^\varepsilon)$ form the covariant basis of the tangent space at the point $\bx^*=\bTheta(\bx^\varepsilon)$ and $\bg^{i,\varepsilon}(\bx^\varepsilon) $ defined by the relations $\bg^{i,\varepsilon}\cdot \bg_j^\varepsilon=\delta_j^i$ form the contravariant basis at the point $\bx^*=\bTheta(\bx^\varepsilon)$. We define the metric tensor, in covariant or contravariant components, respectively, by
\begin{align*}
 g_{ij}^\varepsilon:=\bg_i^\varepsilon \cdot \bg_j^\varepsilon,\quad g^{ij,\varepsilon}:=\bg^{i,\varepsilon} \cdot \bg^{j,\varepsilon},
\end{align*}
and Christoffel symbols by
\begin{align} \label{simbolos3D}
\Gamma^{p,\varepsilon}_{ij}:=\bg^{p,\varepsilon}\cdot\d_i^\varepsilon \bg_j^\varepsilon. 
\end{align}

The volume element in the set $\bTheta(\bar{\Omega}^\varepsilon)=\bar{\Omega}^*$ is $\sqrt{g^\varepsilon}dx^\var=dx^*$ and the surface element in $\bTheta(\Gamma^\varepsilon)=\Gamma^*$ is $\sqrt{g^\varepsilon}d\Gamae =d\Gamma^*$  where
\begin{align} \label{g}
g^\varepsilon:=\det (g^\varepsilon_{ij}).
\end{align} 
Therefore, for a field ${\bv}^*$ defined in $\bTheta(\bar{\Omega}^\var)=\bar{\Omega}^*$, we define its covariant curvilinear coordinates $v_i^\var$ by
\begin{equation*}
{\bv}^*({\bx}^*)={v}^*_i({\bx}^*){\be}^i=:v_i^\var(\bx^\var)\bg^i(\bx^\var),\ {\rm with}\ {\bx}^*=\bTheta(\bx^\var).
\end{equation*}


Besides, we denote by $u_i^\varepsilon:[0,T]\times \bar{\Omega}^\varepsilon \rightarrow \mathbb{R}^3$ the covariant components of the displacements field, that is  $\bUcal^\var:=u_i^\varepsilon \bg^{i,\varepsilon}:[0,T]\times\bar{\Omega}^\varepsilon \rightarrow \mathbb{R}^3$ . For simplicity, we define the vector field $\bu^\varepsilon=(u_i^\varepsilon):[0,T]\times {\Omega}^\varepsilon \rightarrow \mathbb{R}^3$ which will be denoted vector of unknowns.

Recall that we assumed that the shell is subjected to a boundary condition of place; in particular that the displacements field vanishes in  $\bTheta(\Gamma_0^\varepsilon)=\Gamma_0^*$,  this is,  on a portion of the lateral face of the shell.

Accordingly, let us define the space of admissible unknowns,
\begin{align*} 
V(\Omega^\varepsilon)=\{\bv^\varepsilon=(v_i^\varepsilon)\in [H^1(\Omega^\varepsilon)]^3; \bv^\varepsilon=\mathbf{\bcero} \ on \ \Gamma_0^\varepsilon  \}.
\end{align*}

This is a real Hilbert space with the induced inner product of $[H^1(\Omega^\var)]^3$. The corresponding norm  is denoted by $||\cdot||_{1,\Omega^\var}$. 

Therefore, we can find the expression of the Problem \ref{problema_cartesian} in curvilinear coordinates (see \cite{Ciarlet4b} for details). Hence, the `` displacements " field $\bu^\var=(u_i^\var)$ verifies the following variational problem of a three-dimensional viscoelastic shell in curvilinear coordinates:

\begin{problem}\label{problema_eps}
Find $\bu^\varepsilon=(u_i^\varepsilon):[0,T]\times {\Omega}^\varepsilon \rightarrow \mathbb{R}^3$  such that, 
\begin{align*} 
  \displaystyle  \nonumber
  & \bu^\varepsilon(t,\cdot)\in V(\Omega^\varepsilon) \forallt,
  \\ \nonumber 
   &\int_{\Omega^\varepsilon}A^{ijkl,\varepsilon}e^\varepsilon_{k||l}(\bu^\varepsilon)e^\varepsilon_{i||j}(\bv^\varepsilon)\sqrt{g^\varepsilon} dx^\varepsilon+ \int_{\Omega^\varepsilon} B^{ijkl,\varepsilon}e^\varepsilon_{k||l}(\dot{\bu}^\varepsilon)e_{i||j}^\var(\bv^\varepsilon) \sqrt{g^\varepsilon}  dx^\varepsilon
  \\ 
 & \quad= \int_{\Omega^\varepsilon} f^{i,\varepsilon} v_i^\varepsilon \sqrt{g^\varepsilon} dx^\varepsilon + \int_{\Gamma_+^\varepsilon\cup\Gamma_-^\varepsilon} h^{i,\varepsilon} v_i^\varepsilon\sqrt{g^\varepsilon}  d\Gamma^\varepsilon  \quad \forall \bv^\varepsilon\in V(\Omega^\varepsilon), \aes,
  \\\displaystyle 
  & \bu^\varepsilon(0,\cdot)= \bu_0^\varepsilon(\cdot),
\end{align*}
\end{problem}
 where the functions
  \begin{align}\label{TensorAeps}
  & A^{ijkl,\varepsilon}:= \lambda g^{ij,\varepsilon}g^{kl,\varepsilon} + \mu(g^{ik,\varepsilon}g^{jl,\varepsilon} + g^{il,\varepsilon}g^{jk,\varepsilon} ), 
  \\ \label{TensorBeps}
  & B^{ijkl,\varepsilon}:= \theta g^{ij,\varepsilon}g^{kl,\varepsilon} + \frac{\rho}{2}(g^{ik,\varepsilon}g^{jl,\varepsilon} + g^{il,\varepsilon}g^{jk,\varepsilon} ), 
 \end{align}
  are the contravariant components of the three-dimensional elasticity and viscosity tensors, respectively. We assume that the Lam\'e coefficients   $\lambda\geq0, \mu>0$ and the viscosity coefficients $\theta\geq 0,\rho\geq 0$  are all independent of $\var$. Moreover, the terms
 \begin{align*}
  e^\varepsilon_{i||j}(\bu^\var):= \frac1{2}(u^\varepsilon_{i||j}+ u^\varepsilon_{j||i})=\frac1{2}(\d^\varepsilon_ju^\varepsilon_i + \d^\varepsilon_iu^\varepsilon_j) - \Gamma^{p,\varepsilon}_{ij}u^\varepsilon_p,
 \end{align*}
 designate the covariant components of the linearized strain tensor associated with the displacement field $\bUcal^\var$of the set $\bTheta(\bar{\Omega}^\varepsilon)$.  Moreover, $f^{i,\var}$ denotes the contravariant components of the volumic force densities, $h^{i,\var}$ denotes contravariant components of surface force densities and $\bu_0^\var$ denotes the initial ``displacements" (actually, the initial displacement is $\bUcal^\var(0)=:\bUcal_0^\var=(u_0^\var)_i\bg^{i,\var}$).

 Note that the following additional relations are satisfied,
 \begin{align}\nonumber
 \Gamma^{3,\varepsilon}_{\alpha 3}=\Gamma^{p,\varepsilon}_{33}&=0  \ \textrm{in} \ \bar{\Omega}^\varepsilon, \\
\label{tensor_terminos_nulos}
 A^{\alpha\beta\sigma 3,\varepsilon}=A^{\alpha 333,\varepsilon}=B^{\alpha\beta\sigma 3 , \varepsilon}&=B^{\alpha 333, \varepsilon}=0 \ \textrm{in} \ \bar{\Omega}^\varepsilon,
 \end{align}
 as a consequence of the definition of $\bTheta$ in (\ref{bTheta}). 
 
The existence and uniqueness of solution of the Problem \ref{problema_eps} for $\var>0$ small enough, established in the following theorem,  was proved in \cite{intro2}: 
 \begin{theorem}\label{Thexistunic}
  Let $\Omega^\var$ be a domain in $\mathbb{R}^3$ defined previously in this section and let $\bTheta$ be a  $\mathcal{C}^2$-diffeomorphism of $\bar{\Omega}^\var$ in its image $\bTheta(\bar{\Omega}^\var)$, such that the three vectors $\bg_i^\var(\bx)=\d_i^\var\bTheta(\bx^\var)$ are linearly independent for all $\bx^\var\in\bar{\Omega}^\var$. Let $\Gamma_0^\var$ be a $d\Gamma^\var$-measurable subset of $\Gamma^\var=\d\Omega^\var$ such that  $meas(\Gamma_0^\var)>0.$
  Let $\fb^{i,\var}\in L^{2}(0,T; L^2(\Omega^\var)) $, $\bh^{i,\var}\in L^{2}(0,T; L^2(\Gamma_1^\var))$, where $\Gamma_1^\var:= \Gamma_+^\var\cup\Gamma_-^\var$. Let  $\bu_0^\var\in V(\Omega^\var). $ Then, there exists a unique solution $\bu^\var=(u_i^\var):[0,T]\times\Omega^\var \rightarrow \mathbb{R}^3$ satisfying the Problem \ref{problema_eps}. Moreover, $\bu^\var\in H^{1}(0,T;V(\Omega^\var))$. In addition to that, if $\dot{\fb}^{i,\var}\in L^{2}(0,T; L^2(\Omega^\var)) $, $\dot{\bh}^{i,\var}\in L^{2}(0,T; L^2(\Gamma_1^\var))$, then $\bu^\var\in H^{2}(0,T;V(\Omega^\var))$.
 \end{theorem}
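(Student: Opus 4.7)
My plan is to recast Problem~\ref{problema_eps} as a first-order abstract Cauchy problem in the Hilbert space $V(\Omega^\var)$, and then apply classical Cauchy--Lipschitz theory. First, I would define on $V(\Omega^\var)\times V(\Omega^\var)$ the two bilinear forms
\begin{align*}
a^\var(\bu,\bv)&:=\int_{\Omega^\var} A^{ijkl,\var}e^\var_{k||l}(\bu)e^\var_{i||j}(\bv)\sqrt{g^\var}\,dx^\var,\\
b^\var(\bu,\bv)&:=\int_{\Omega^\var} B^{ijkl,\var}e^\var_{k||l}(\bu)e^\var_{i||j}(\bv)\sqrt{g^\var}\,dx^\var.
\end{align*}
Continuity of $a^\var$ and $b^\var$ is immediate from the uniform boundedness on $\bar\Omega^\var$ of the components $A^{ijkl,\var}$, $B^{ijkl,\var}$ and of $\sqrt{g^\var}$ (with $g^\var$ bounded below away from $0$ for $\var$ sufficiently small, by Theorem~\ref{var_0}), together with Cauchy--Schwarz. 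Coercivity of both forms reduces, via the standard algebraic positive-definiteness of the elasticity and viscosity tensors (owing to $\mu>0$, $\rho>0$, and the uniform positive-definiteness of $(g^{ij,\var})$), to the three-dimensional Korn inequality in curvilinear coordinates on a shell, which is precisely the key tool established in \cite{Ciarlet4b} (Theorem~1.7-4) and used already in \cite{intro2}; the Dirichlet condition on $\Gamma_0^\var$ of positive measure lets us discard the rigid-displacement kernel.

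Let $\mathcal{A}^\var,\mathcal{B}^\var:V(\Omega^\var)\to V(\Omega^\var)^*$ be the bounded linear operators associated with $a^\var$ and $b^\var$, and define the load $L^\var(t)\in V(\Omega^\var)^*$ by
\begin{equation*}
\langle L^\var(t),\bv\rangle:=\int_{\Omega^\var}f^{i,\var}(t)v_i\sqrt{g^\var}\,dx^\var+\int_{\Gamma^\var_+\cup\Gamma^\var_-}h^{i,\var}(t)v_i\sqrt{g^\var}\,d\Gamma^\var,
\end{equation*}
so that $L^\var\in L^2(0,T;V(\Omega^\var)^*)$ by the assumed regularity of the data. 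By Lax--Milgram applied to $b^\var$, $\mathcal{B}^\var$ is an isomorphism, and the variational equation of Problem~\ref{problema_eps} rewrites as the abstract linear ODE
\begin{equation*}
\dot\bu^\var(t)=-(\mathcal{B}^\var)^{-1}\mathcal{A}^\var\bu^\var(t)+(\mathcal{B}^\var)^{-1}L^\var(t),\qquad \bu^\var(0)=\bu_0^\var,
\end{equation*}
in the Hilbert space $V(\Omega^\var)$. Since $(\mathcal{B}^\var)^{-1}\mathcal{A}^\var$ is a bounded linear endomorphism of $V(\Omega^\var)$ and the forcing term $(\mathcal{B}^\var)^{-1}L^\var$ belongs to $L^2(0,T;V(\Omega^\var))$, the classical Carathéodory--Cauchy--Lipschitz theorem in Banach spaces yields a unique $\bu^\var\in H^1(0,T;V(\Omega^\var))$ solving the ODE, which is precisely the desired unique solution of Problem~\ref{problema_eps}.

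For the improved regularity, I would differentiate the abstract ODE formally in time to obtain
\begin{equation*}
\ddot\bu^\var(t)=-(\mathcal{B}^\var)^{-1}\mathcal{A}^\var\dot\bu^\var(t)+(\mathcal{B}^\var)^{-1}\dot L^\var(t),
\end{equation*}
with initial datum $\dot\bu^\var(0)=-(\mathcal{B}^\var)^{-1}\mathcal{A}^\var\bu_0^\var+(\mathcal{B}^\var)^{-1}L^\var(0)\in V(\Omega^\var)$ (note $L^\var(0)$ makes sense because $L^\var\in H^1(0,T;V(\Omega^\var)^*)\hookrightarrow C([0,T];V(\Omega^\var)^*)$ under the additional hypothesis). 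Applying the same Cauchy--Lipschitz argument to this new ODE for the unknown $\dot\bu^\var$ gives $\dot\bu^\var\in H^1(0,T;V(\Omega^\var))$, i.e., $\bu^\var\in H^2(0,T;V(\Omega^\var))$. The main conceptual obstacle is the coercivity of $b^\var$ on $V(\Omega^\var)$, since everything else is essentially abstract ODE theory; this coercivity is non-trivial because it requires a Korn-type inequality for the geometry-dependent strain tensor $e^\var_{i||j}$ in curvilinear coordinates on a thin shell, but it is available from the elastic literature and has already been verified in \cite{intro2}, so the proof reduces to invoking it and running the abstract machinery above.
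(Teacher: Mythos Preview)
The paper does not give its own proof of this theorem; it simply records that the result ``was proved in \cite{intro2}'' and states it. Your approach---recasting the variational equation as an abstract linear ODE $\dot\bu^\var=-(\mathcal{B}^\var)^{-1}\mathcal{A}^\var\bu^\var+(\mathcal{B}^\var)^{-1}L^\var$ in $V(\Omega^\var)$ by inverting the coercive viscosity form via Lax--Milgram, then invoking Cauchy--Lipschitz for bounded generators with $L^2$ forcing, and differentiating for the $H^2$ regularity---is correct and is the standard argument for Kelvin--Voigt problems of this type; it is almost certainly the route taken in \cite{intro2} as well.

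One small observation: your coercivity argument for $b^\var$ requires $\rho>0$, whereas the paper introduces the viscosity coefficients with ``$\theta\geq 0,\rho\geq 0$''. However, the paper itself later asserts the uniform ellipticity (\ref{elipticidadB_eps}) of $(B^{ijkl,\var})$ in Theorem~\ref{Th_comportamiento asintotico}, which forces $\rho>0$; so the strict positivity is implicitly in force throughout, and your argument is consistent with the paper's actual hypotheses.
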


\section{The scaled three-dimensional shell problem}\setcounter{equation}{0} \label{seccion_dominio_ind}

For convenience, we consider a reference domain independent of the small parameter $\var$. Hence, let us define the three-dimensional domain $\Omega:=\omega \times (-1, 1) $ and  its boundary $\Gamma=\d\Omega$. We also define the following parts of the boundary,
 \begin{align*}
 \Gamma_+:=\omega\times \{1\}, \quad \Gamma_-:= \omega\times \{-1\},\quad \Gamma_0:=\gamma_0\times[-1,1].
 \end{align*}
 Let $\bx=(x_1,x_2,x_3)$ be a generic point in $\bar{\Omega}$ and we consider the notation $\d_i$ for the partial derivative with respect to $x_i$. We define the following projection map, 
 \begin{align*}
 \pi^\varepsilon:\bx=(x_1,x_2,x_3)\in \bar{\Omega} \longrightarrow \pi^\varepsilon(\bx)=\bx^\varepsilon=(x_i^\varepsilon)=(x_1^\var,x_2^\var,x_3^\var)=(x_1,x_2,\varepsilon x_3)\in \bar{\Omega}^\varepsilon,
 \end{align*}
 hence, $\d_\alpha^\varepsilon=\d_\alpha $  and $\d_3^\varepsilon=\frac1{\varepsilon}\d_3$. We consider the scaled unknown $\bu(\varepsilon)=(u_i(\varepsilon)):[0,T]\times \bar{\Omega}\longrightarrow \mathbb{R}^3$ and the scaled vector fields $\bv=(v_i):\bar{\Omega}\longrightarrow \mathbb{R}^3 $ defined as
 \begin{align*}
 u_i^\varepsilon(t,\bx^\varepsilon)=:u_i(\varepsilon)(t,\bx) \ \textrm{and} \ v_i^\varepsilon(\bx^\varepsilon)=:v_i(\bx) \ \forall \bx^\varepsilon=\pi^\varepsilon(\bx)\in \bar{\Omega}^\varepsilon, \ \forall \ t\in[0,T].
 \end{align*}

 Also, let the functions, $\Gamma_{ij}^{p,\varepsilon}, g^\varepsilon, A^{ijkl,\varepsilon}, B^{ijkl,\varepsilon}$ defined in (\ref{simbolos3D}), (\ref{g}), (\ref{TensorAeps}) and (\ref{TensorBeps}), be associated with the functions $\Gamma_{ij}^p(\varepsilon), g(\varepsilon), A^{ijkl}(\varepsilon), B^{ijkl}(\varepsilon)$ defined by
  \begin{align} \label{escalado_simbolos}
  &\Gamma_{ij}^p(\varepsilon)(\bx):=\Gamma_{ij}^{p,\varepsilon}(\bx^\varepsilon),
  \\\label{escalado_g}
  & g(\varepsilon)(\bx):=g^\varepsilon(\bx^\varepsilon),
  \\\label{tensorA_escalado}
  & A^{ijkl}(\varepsilon)(\bx):=A^{ijkl,\varepsilon}(\bx^\varepsilon),
  \\\label{tensorB_escalado}
  & B^{ijkl}(\varepsilon)(\bx):=B^{ijkl,\varepsilon}(\bx^\varepsilon),
  \end{align}
 for all $\bx^\varepsilon=\pi^\varepsilon(\bx)\in\bar{\Omega}^\varepsilon$. For all $\bv=(v_i)\in [H^1(\Omega)]^3$, let there be associated the scaled linearized strains components $\eij(\var)(\bv)\in L^2(\Omega)$, defined by
\begin{align} \label{eab}
&\eab(\varepsilon;\bv):=\frac{1}{2}(\d_\beta v_\alpha + \d_\alpha v_\beta) - \Gamma_{\alpha\beta}^p(\varepsilon)v_p,\\ 
  & \eatres(\varepsilon;\bv):=\frac{1}{2}\left(\frac{1}{\var}\d_3 v_\alpha + \d_\alpha v_3\right) - \Gamma_{\alpha 3}^p(\varepsilon)v_p,\\ \label{edtres}
  & \edtres(\varepsilon;\bv):=\frac1{\varepsilon}\d_3v_3.
\end{align}
Note that with these definitions it is verified that
\begin{align*}
\eij^\var(\bv^\var)(\pi^\var(\bx))=\eij(\var;\bv)(\bx) \ \forall\bx\in\Omega.
\end{align*}

 \begin{remark} The functions $\Gamma_{ij}^p(\varepsilon), g(\varepsilon), A^{ijkl}(\varepsilon), B^{ijkl}(\varepsilon)$ converge in $\mathcal{C}^0(\bar{\Omega})$ when $\varepsilon$ tends to zero. However,  $\eatres$ and $\edtres$ are not well defined, hence, this case  leads to a singular problem. 
 \end{remark}
 
 \begin{remark}When we consider
 $\varepsilon=0$ the functions will be defined with respect to $\by\in\bar{\omega}$. We shall distinguish the three-dimensional Christoffel symbols from the two-dimensional ones by using  $\Gamma_{\alpha \beta}^\sigma(\varepsilon)$ and $ \Gamma_{\alpha\beta}^\sigma$, respectively.
 \end{remark}

The next result is an adaptation of $(b)$ in Theorem 3.3-2, \cite{Ciarlet4b} to the viscoelastic case.  We will study the asymptotic behaviour of the scaled contravariant components $A^{ijkl}(\var), B^{ijkl}(\var)$ of the three-dimensional elasticity and viscosity tensors defined in (\ref{tensorA_escalado})--(\ref{tensorB_escalado}), as $\var\rightarrow0$.  We show their uniform positive definiteness  not only with respect to $\bx\in\bar{\Omega}$, but also with respect to $\var$, $0<\var\leq\var_0$. Finally, their limits are functions of $\by\in\bar{\omega}$ only, that is, independent of the transversal variable $x_3$.
\begin{theorem} \label{Th_comportamiento asintotico}
Let $\omega$  be a domain in $\mathbb{R}^2$ and let $\btheta\in\mathcal{C}^2(\bar{\omega};\mathbb{R}^3)$ be an injective mapping such that the two vectors $\ba_\alpha=\d_\alpha\btheta$ are linearly independent at all points of $\bar{\omega}$, let $a^{\alpha\beta}$ denote the contravariant components of the metric tensor of $S=\btheta(\bar{\omega})$. In addition to that, let the other assumptions on the mapping $\btheta$ and the definition of $\var_0$ be as in Theorem \ref{var_0}. The contravariant components $A^{ijkl}(\var), B^{ijkl}(\var)$ of the scaled three-dimensional elasticity and viscosity tensors, respectively, defined in (\ref{tensorA_escalado})--(\ref{tensorB_escalado}) satisfy
\begin{align*}
A^{ijkl}(\var)= A^{ijkl}(0) + O(\var) \ \textrm{and} \ A^{\alpha\beta\sigma 3}(\var)=A^{\alpha 3 3 3}(\var)=0, \\
B^{ijkl}(\var)= B^{ijkl}(0) + O(\var) \ \textrm{and} \ B^{\alpha\beta\sigma 3}(\var)=B^{\alpha 3 3 3}(\var)=0 ,
\end{align*}
for all $\var$, $0<\var \leq \var_0$, 
 and
\begin{align*}
A^{\alpha\beta\sigma\tau}(0)&= \lambda a^{\alpha\beta}a^{\sigma\tau} + \mu(a^{\alpha\sigma}a^{\beta\tau} + a^{\alpha\tau}a^{\beta\sigma}), & A^{\alpha\beta 3 3}(0)&= \lambda a^{\alpha\beta},
\\
 A^{\alpha 3\sigma 3}(0)&=\mu a^{\alpha\sigma} ,& A^{33 3 3}(0)&= \lambda + 2\mu,
 \\
A^{\alpha\beta\sigma 3}(0) &=A^{\alpha 333}(0)=0,
\\
B^{\alpha\beta\sigma\tau}(0)&= \theta a^{\alpha\beta}a^{\sigma\tau} + \frac{\rho}{2}(a^{\alpha\sigma}a^{\beta\tau} + a^{\alpha\tau}a^{\beta\sigma}),& B^{\alpha\beta 3 3}(0)&= \theta a^{\alpha\beta},
\\
 B^{\alpha 3\sigma 3}(0)&=\frac{\rho}{2} a^{\alpha\sigma} ,& B^{33 3 3}(0)&= \theta + \rho, 
\\
B^{\alpha\beta\sigma 3}(0) &=B^{\alpha 333}(0)=0.
\end{align*}
 Moreover, there exist two constants $C_e>0$ and $C_v>0$, independent of the variables and $\var$, such that 
  \begin{align} \label{elipticidadA_eps}
  \sum_{i,j}|t_{ij}|^2\leq C_e A^{ijkl}(\varepsilon)(\bx)t_{kl}t_{ij},\\\label{elipticidadB_eps}
  \sum_{i,j}|t_{ij}|^2 \leq C_v B^{ijkl}(\varepsilon)(\bx)t_{kl}t_{ij},
  \end{align}
 for all $\var$, $0<\var\leq\var_0$, for all $\bx\in\bar{\Omega}$ and all $\bt=(t_{ij})\in\mathbb{S}^3$. 
\end{theorem}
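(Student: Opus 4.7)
The plan is to reduce everything to an explicit analysis of the scaled metric tensor. Starting from $\bTheta(\bx^\varepsilon)=\btheta(\by)+x_3^\varepsilon\ba_3(\by)$ and substituting $x_3^\varepsilon=\varepsilon x_3$ via the rescaling $\pi^\varepsilon$, differentiation gives the scaled tangent vectors $\bg_\alpha(\varepsilon)=\ba_\alpha-\varepsilon x_3\, b_\alpha^\sigma\ba_\sigma$ and $\bg_3(\varepsilon)=\ba_3$. Since $\bg_\alpha(\varepsilon)$ lies in the tangent plane to $S$ while $\bg_3(\varepsilon)=\ba_3$ is orthogonal to it, the covariant metric is block-diagonal:
\begin{equation*}
g_{\alpha 3}(\varepsilon)=0,\qquad g_{33}(\varepsilon)=1,\qquad g_{\alpha\beta}(\varepsilon)=a_{\alpha\beta}-2\varepsilon x_3\, b_{\alpha\beta}+\varepsilon^2 x_3^2\, b_\alpha^\sigma b_{\sigma\beta},
\end{equation*}
for every $\varepsilon\in(0,\varepsilon_0]$, so in particular $g_{ij}(\varepsilon)=g_{ij}(0)+O(\varepsilon)$ in $\mathcal{C}^0(\bar{\Omega})$.

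The block-diagonal structure is preserved by inversion: $g^{\alpha 3}(\varepsilon)=0$ and $g^{33}(\varepsilon)=1$ identically in $\varepsilon$, while $(g^{\alpha\beta}(\varepsilon))$ is the inverse of the $2\times 2$ matrix $(g_{\alpha\beta}(\varepsilon))$, giving $g^{\alpha\beta}(\varepsilon)=a^{\alpha\beta}+O(\varepsilon)$ in $\mathcal{C}^0(\bar{\Omega})$ by smoothness of matrix inversion. Substituting into (\ref{TensorAeps})--(\ref{TensorBeps}), every monomial appearing in $A^{\alpha\beta\sigma 3}(\varepsilon)$ or $A^{\alpha 333}(\varepsilon)$ contains at least one factor $g^{\mu 3}(\varepsilon)$ with $\mu\in\{1,2\}$, which vanishes; the same holds for $B$. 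The explicit values of $A^{ijkl}(0)$ and $B^{ijkl}(0)$ then follow by direct substitution of the limits $g^{\alpha\beta}(0)=a^{\alpha\beta}$, $g^{33}(0)=1$, $g^{\alpha 3}(0)=0$, and the expansions $A^{ijkl}(\varepsilon)=A^{ijkl}(0)+O(\varepsilon)$, $B^{ijkl}(\varepsilon)=B^{ijkl}(0)+O(\varepsilon)$ are inherited term by term from those of $g^{ij}(\varepsilon)$.

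For the uniform ellipticity bounds (\ref{elipticidadA_eps})--(\ref{elipticidadB_eps}), the key ingredient is uniform positive-definiteness of $(g^{ij}(\varepsilon))$. Theorem \ref{var_0} ensures that $\bTheta$ is a $\mathcal{C}^1$-diffeomorphism on $\bar{\Omega}_1$, so $(g_{ij}(\varepsilon)(\bx))$ is symmetric positive definite for every $(\varepsilon,\bx)\in(0,\varepsilon_0]\times\bar{\Omega}$; combined with the $\mathcal{C}^0(\bar{\Omega})$-convergence to $(g_{ij}(0))$ (itself positive definite on the compact $\bar{\Omega}$), a standard continuity-compactness argument produces a constant $c>0$, independent of $\varepsilon$ and $\bx$, with $g^{ij}(\varepsilon)(\bx)\xi_i\xi_j\geq c|\xi|^2$ for every $\xi=(\xi_i)\in\mathbb{R}^3$. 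Diagonalising the symmetric positive-definite matrix $(g^{ij}(\varepsilon))$ in an orthonormal basis then shows, for every symmetric $\bt=(t_{ij})\in\mathbb{S}^3$, that $g^{ik}(\varepsilon)g^{jl}(\varepsilon)t_{ij}t_{kl}\geq c^2\sum_{i,j}|t_{ij}|^2$, and therefore
\begin{equation*}
A^{ijkl}(\varepsilon)t_{ij}t_{kl}=\lambda\bigl(g^{ij}(\varepsilon)t_{ij}\bigr)^{\!2}+2\mu\, g^{ik}(\varepsilon)g^{jl}(\varepsilon)t_{ij}t_{kl}\geq 2\mu c^2 \sum_{i,j}|t_{ij}|^2,
\end{equation*}
proving (\ref{elipticidadA_eps}) with $C_e:=(2\mu c^2)^{-1}$. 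The same argument with $(\theta,\rho/2)$ in place of $(\lambda,\mu)$ yields (\ref{elipticidadB_eps}).

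The main obstacle is securing the $\varepsilon$-uniform ellipticity constant; once uniform positive-definiteness of $(g_{ij}(\varepsilon))$ is obtained from Theorem \ref{var_0} together with compactness of $\bar{\Omega}$, the rest of the argument is algebraic and formally identical to the purely elastic case (Theorem 3.3-2 in \cite{Ciarlet4b}), since $B^{ijkl}(\varepsilon)$ shares exactly the isotropic structure of $A^{ijkl}(\varepsilon)$ after replacing the Lamé by the viscosity coefficients.
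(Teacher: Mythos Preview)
Your argument is correct and aligns with the paper's own treatment: the paper does not give a self-contained proof but simply refers to Theorem 3.3-2 in \cite{Ciarlet4b} for $A^{ijkl}(\varepsilon)$ and remarks that the case of $B^{ijkl}(\varepsilon)$ follows verbatim upon replacing the Lam\'e constants by the viscosity coefficients. You have spelled out precisely those details---the block-diagonal structure of $(g_{ij}(\varepsilon))$, the resulting vanishing and limit identities, and the compactness argument for uniform ellipticity---so your approach is essentially identical to the one the paper invokes by reference.
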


 \begin{remark}
 Notice that, by the asymptotic behaviour of tensors $\left(A^{ijkl}(\varepsilon)\right)$  and  $\left(B^{ijkl}(\varepsilon)\right)$  from Theorem \ref{Th_comportamiento asintotico},  if we take the limit when $\var\to0$ in (\ref{elipticidadA_eps}) and (\ref{elipticidadB_eps}), we find, respectively, that 
  \begin{align} \label{elipticidadA_eps0}
   \sum_{i,j}|t_{ij}|^2\leq C_e A^{ijkl}(0)(\bx)t_{kl}t_{ij}, \quad 
   \sum_{i,j}|t_{ij}|^2 \leq C_v B^{ijkl}(0)(\bx)t_{kl}t_{ij},
   \end{align}
  for all $\bx\in\bar{\Omega}$ and all $\bt=(t_{ij})\in\mathbb{S}^3$.
 \end{remark}

 \begin{remark}
 Note that the proof for the scaled viscosity tensor $\left(B^{ijkl}(\varepsilon)\right)$ would follow the steps of the proof for the elasticity tensor $\left(A^{ijkl}(\var)\right)$ in Theorem 3.3-2, \cite{Ciarlet4b}, since from a quality point of view their expressions differ in replacing the Lam\'e constants by the two viscosity coefficients. 
 \end{remark}

 Let the scaled applied forces $\bbf^i(\varepsilon):[0,T]\times \Omega\longrightarrow \mathbb{R}^3$ and  $\bbh^i(\varepsilon):[0,T]\times (\Gamma_+\cup\Gamma_-)\longrightarrow \mathbb{R}^3$ be defined by
   \begin{align*}
  \bbf^\var&=(f^{i,\varepsilon})(t,\bx^\varepsilon)=:\bbf(\var)= (f^i(\varepsilon))(t,\bx) 
  \\ \nonumber
  &\forall \bx\in\Omega, \ \textrm{where} \ \bx^\varepsilon=\pi^\varepsilon(\bx)\in \Omega^\varepsilon \ \textrm{and} \ \forall t\in[0,T], \\ 
   \bbh^\var&=(h^{i,\varepsilon})(t,\bx^\varepsilon)=:\bbh(\var)= (h^i(\varepsilon))(t,\bx) 
   \\ \nonumber 
   &\forall \bx\in\Gamma_+\cup\Gamma_-, \ \textrm{where} \ \bx^\varepsilon=\pi^\varepsilon(\bx)\in \Gamma_+^\varepsilon\cup\Gamma_-^\varepsilon \ \textrm{and} \ \forall t\in[0,T].
   \end{align*}
   Also, we introduce $\bu_0(\var): \Omega \longrightarrow \mathbb{R}^3$ by
   \begin{align*}
   \bu_0(\var)(\bx):=\bu_0^\var(\bx^\var) \ \forall \bx\in\Omega, \ \textrm{where} \ \bx^\varepsilon=\pi^\varepsilon(\bx)\in \Omega^\varepsilon,
   \end{align*}
   and define the space
   \begin{align*} 
   V(\Omega):=\{\bv=(v_i)\in [H^1(\Omega)]^3; \bv=\mathbf{0} \ on \ \Gamma_0\},
   \end{align*}
  which is a Hilbert space, with associated norm denoted by $||\cdot||_{1,\Omega}$.

  We assume that the scaled applied forces are given by
  \begin{align*} 
  & \bbf(\varepsilon)(t, \bx)=\varepsilon^2\bbf^2(t,\bx) \ \forall \bx\in \Omega \ \textrm{and} \ \forall t\in[0,T], \\ 
  & \bbh(\varepsilon)(t, \bx)=\varepsilon^{3}\bbh^{3}(t,\bx) \ \forall  \bx\in \Gamma_+\cup\Gamma_- \ \textrm{and} \ \forall t\in[0,T],
  \end{align*}
 where $\bbf^2$ and $\bbh^{3}$ are functions independent of $\var$. Then, the scaled variational problem  can   be written as follows:
  \begin{problem}\label{problema_orden_fuerzas}
   Find $\bu(\varepsilon):[0,T]\times\Omega\longrightarrow \mathbb{R}^3$ such that,
   \begin{align} \nonumber
   & \bu(\varepsilon)(t,\cdot)\in V(\Omega) \forallt, \\ \nonumber
       &\int_{\Omega}A^{ijkl}(\varepsilon)e_{k||l}(\varepsilon,\bu(\varepsilon))e_{i||j}(\varepsilon,\bv)\sqrt{g(\varepsilon)} dx
    + \int_{\Omega} B^{ijkl}(\varepsilon)e_{k||l}(\varepsilon,\dot{\bu}(\varepsilon))e_{i||j}(\varepsilon,\bv) \sqrt{g(\varepsilon)}  dx
      \\  \label{ecuacion_orden_fuerzas}
      &\quad= \int_{\Omega} \var^2 \fb^{i,2}v_i \sqrt{g(\varepsilon)} dx + \int_{\Gamma_+\cup\Gamma_-} \var^{2}\bh^{i,3} v_i\sqrt{g(\varepsilon)}  d\Gamma  \quad \forall \bv\in V(\Omega), \aes,
      \\\displaystyle \nonumber
      & \bu(\var)(0,\cdot)= \bu_0(\var)(\cdot).
     \end{align} 
   \end{problem}
   From now on,   for each $\var>0$, we shall use  the shorter notation $\eij(\var)\equiv\eij(\varepsilon;\bu(\varepsilon))$ and $\deij(\var)\equiv\eij(\varepsilon;\dot{\bu}(\varepsilon))$, for its time derivative. 
   We recall the existence and uniqueness of the Problem \ref{problema_orden_fuerzas} in the following theorem whose proof can be found in  \cite{intro2}:

 \begin{theorem}\label{Theorema_exist_escalado_sin_orden} 
  Let $\Omega$ be a domain in $\mathbb{R}^3$ defined previously in this section  and let $\bTheta$ be a $\mathcal{C}^2$-diffeomorphism of $\bar{\Omega}$ onto its image $\bTheta(\bar{\Omega})$, such that the three vectors $\bg_i=\d_i\bTheta(\bx)$ are linearly independent for all $\bx\in\bar{\Omega}$. Let $\fb^{i}(\var)\in L^{2}(0,T; L^2(\Omega)) $, $\bh^{i}(\var)\in L^{2}(0,T; L^2(\Gamma_1))$, where $\Gamma_1:= \Gamma_+\cup\Gamma_-$. Let  $\bu_0(\var)\in V(\Omega). $ Then, there exists a unique solution $\bu(\var)=(u_i(\var)):[0,T]\times\Omega \rightarrow \mathbb{R}^3$ satisfying the Problem \ref{problema_orden_fuerzas}. Moreover $\bu(\var)\in H^{1}(0,T;V(\Omega))$. In addition to that, if $\dot{\fb}^{i}(\var)\in L^{2}(0,T; L^2(\Omega)) $, $\dot{\bh}^{i}(\var)\in L^{2}(0,T; L^2(\Gamma_1))$, then $\bu(\var)\in H^{2}(0,T;V(\Omega))$.
  \end{theorem}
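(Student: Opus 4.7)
The strategy is to recast Problem \ref{problema_orden_fuerzas} as a first-order linear abstract Cauchy problem in the Hilbert space $V(\Omega)$ and then apply classical ODE theory in Banach spaces. Because the viscosity bilinear form is coercive, the evolution is effectively governed by a bounded-coefficient ODE, which is the key structural observation.

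\emph{Step 1: Bilinear forms and coercivity.} For fixed $\var\in(0,\var_0]$, introduce on $V(\Omega)\times V(\Omega)$ the symmetric bilinear forms
\begin{align*}
a(\var;\bu,\bv) &:= \int_\Omega A^{ijkl}(\var)\, e_{k||l}(\var;\bu)\, e_{i||j}(\var;\bv)\sqrt{g(\var)}\, dx,\\
b(\var;\bu,\bv) &:= \int_\Omega B^{ijkl}(\var)\, e_{k||l}(\var;\bu)\, e_{i||j}(\var;\bv)\sqrt{g(\var)}\, dx,
\end{align*}
together with the time-dependent linear functional $L(\var;t)\in V(\Omega)'$ whose action corresponds to the right-hand side of (\ref{ecuacion_orden_fuerzas}). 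Continuity of $a$ and $b$ on $V(\Omega)$ follows from $A^{ijkl}(\var),B^{ijkl}(\var),g(\var)\in\mathcal{C}^0(\bar{\Omega})$. Combining the uniform ellipticity bounds (\ref{elipticidadA_eps})--(\ref{elipticidadB_eps}) from Theorem \ref{Th_comportamiento asintotico} with the three-dimensional Korn inequality in curvilinear coordinates (Theorem 1.7-4 of \cite{Ciarlet4b}), valid since $\textrm{meas}(\Gamma_0)>0$, I obtain constants $\alpha(\var),\beta(\var)>0$ with
\begin{equation*}
a(\var;\bv,\bv)\ge\alpha(\var)\|\bv\|_{1,\Omega}^2,\qquad b(\var;\bv,\bv)\ge\beta(\var)\|\bv\|_{1,\Omega}^2,\quad\forall \bv\in V(\Omega).
\end{equation*}

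\emph{Step 2: Reduction to an ODE in $V(\Omega)$.} Let $\mathcal{A},\mathcal{B}:V(\Omega)\to V(\Omega)'$ be the bounded linear operators associated with $a(\var;\cdot,\cdot)$ and $b(\var;\cdot,\cdot)$. Coercivity of $b$ makes $\mathcal{B}$ a linear homeomorphism with bounded inverse, hence $\mathcal{B}^{-1}\mathcal{A}:V(\Omega)\to V(\Omega)$ is bounded and linear. Testing against arbitrary $\bv\in V(\Omega)$, Problem \ref{problema_orden_fuerzas} is equivalent to
\begin{equation*}
\dot{\bu}(\var)(t)+\mathcal{B}^{-1}\mathcal{A}\,\bu(\var)(t)=\mathcal{B}^{-1}L(\var;t)\textrm{ in }V(\Omega),\qquad \bu(\var)(0)=\bu_0(\var),
\end{equation*}
for a.e.\ $t\in(0,T)$. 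The assumptions $f^{i}(\var)\in L^2(0,T;L^2(\Omega))$ and $h^{i}(\var)\in L^2(0,T;L^2(\Gamma_1))$ ensure $t\mapsto\mathcal{B}^{-1}L(\var;t)$ lies in $L^2(0,T;V(\Omega))$, while $\bu_0(\var)\in V(\Omega)$.

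\emph{Step 3: Existence, uniqueness and regularity.} This is a linear inhomogeneous ODE in the Hilbert space $V(\Omega)$ with bounded generator. The Cauchy-Lipschitz theorem for Banach-space-valued ODEs (or, equivalently, the explicit variation-of-constants formula built from the uniformly continuous semigroup $e^{-t\mathcal{B}^{-1}\mathcal{A}}$) delivers a unique $\bu(\var)\in\mathcal{C}^0([0,T];V(\Omega))$ with $\dot{\bu}(\var)\in L^2(0,T;V(\Omega))$, whence $\bu(\var)\in\WVO$. Under the stronger hypotheses $\dot f^{i}(\var),\dot h^{i}(\var)\in L^2$, one has $t\mapsto\dot L(\var;t)\in L^2(0,T;V(\Omega)')$; applying the same argument to the equation formally satisfied by $\dot{\bu}(\var)$ (whose initial value is determined from the original equation at $t=0$), or differentiating the variation-of-constants representation, upgrades the regularity to $\bu(\var)\in H^2(0,T;V(\Omega))$. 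The main obstacle is the coercivity step: securing a uniform Korn-type inequality in curvilinear coordinates is the only non-trivial analytic ingredient, after which the linearity and the coercivity of the viscosity form reduce everything to classical linear ODE theory.
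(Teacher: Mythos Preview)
Your argument is correct. The paper does not give its own proof of this theorem but refers the reader to \cite{intro2}, so a direct line-by-line comparison is not possible here; nevertheless, your reduction to a linear ODE in $V(\Omega)$ with bounded generator, obtained by inverting the coercive viscosity operator $\mathcal{B}$, is the standard and natural route for Kelvin--Voigt problems of this type and is almost certainly the approach taken in \cite{intro2} as well. One minor remark: for the Korn step you may equivalently invoke the inequality of Theorem~\ref{Th_desigKorn} (the scaled Korn inequality on $\Omega$), since for \emph{fixed} $\varepsilon$ the $\varepsilon$-dependence of the constant is irrelevant to coercivity; and in the $H^2$-regularity step your use of the initial value $\dot{\bu}(\varepsilon)(0)=\mathcal{B}^{-1}\bigl(L(\varepsilon;0)-\mathcal{A}\bu_0(\varepsilon)\bigr)$ is legitimate because the hypothesis $\dot f^i(\varepsilon),\dot h^i(\varepsilon)\in L^2$ places $L(\varepsilon;\cdot)$ in $H^1(0,T;V(\Omega)')\subset\mathcal{C}^0([0,T];V(\Omega)')$.
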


\section{Technical preliminaries}\setcounter{equation}{0} \label{preliminares}

Concerning geometrical and mechanical preliminaries, we shall present some theorems, which will be used in the following sections. 
First, we recall the Theorem 3.3-1, \cite{Ciarlet4b}.

\begin{theorem} \label{Th_simbolos2D_3D}
Let $\omega$ be a domain in $\mathbb{R}^2$, let $\btheta\in\mathcal{C}^3(\bar{\omega};\mathcal{R}^3)$ be an injective mapping such that the two vectors $\ba_\alpha=\d_\alpha\btheta$ are linearly independent at all points of $\bar{\omega}$ and let $\var_0>0$ be as in Theorem \ref{var_0}. The functions $\Gamma^p_{ij}(\var)=\Gamma^p_{ji}(\var)$ and $g(\var)$ are defined in (\ref{escalado_simbolos})--(\ref{escalado_g}), the functions $b_{\alpha\beta}, b_\alpha^\sigma, \Gamma_{\alpha\beta}^\sigma,a$, are defined in Section \ref{problema} and the covariant derivatives $b_\beta^\sigma|_\alpha$ are defined by
\begin{align} \label{b_barra}
b_\beta^\sigma|_\alpha:=\d_\alpha b_\beta^\sigma +\Gamma^\sigma_{\alpha\tau}b_\beta^\tau - \Gamma^\tau_{\alpha\beta}b^\sigma_\tau.
\end{align}
The functions $b_{\alpha\beta}, b_\alpha^\sigma, \Gamma_{\alpha\beta}^\sigma, b_\beta^\sigma|_\alpha$ and $a$ are identified with functions in $\mathcal{C}^0(\bar{\Omega})$. Then
\begin{align*}
\begin{aligned}[c]
 \Gamma_{\alpha\beta}^\sigma(\var)&=  \Gamma_{\alpha\beta}^\sigma -\var x_3b_\beta^\sigma|_\alpha + O(\var^2), \\
  \d_3 \Gamma_{\alpha\beta}^p(\var)&= O(\var), 
   \\
   \Gamma_{\alpha3}^3(\var)&=\Gamma_{33}^p(\var)=0,
\end{aligned}
\qquad
\begin{aligned}[c]
 \Gamma_{\alpha\beta}^3(\var)&=b_{\alpha\beta} - \var x_3 b_\alpha^\sigma b_{\sigma\beta}, 
 \\
 \Gamma_{\alpha3}^\sigma(\var)& = -b_\alpha^\sigma - \var x_3 b_\alpha^\tau b_\tau^\sigma + O(\var^2), 
\\
 g(\varepsilon)&=a + O(\varepsilon),
\end{aligned}
\end{align*}
for all $\var$, $0<\var\leq\var_0$, where the order symbols $O(\var)$ and $O(\var^2)$  are meant with respect to the norm $||\cdot||_{0,\infty,\bar{\Omega}}$ defined by
\begin{align*} 
||w||_{0,\infty,\bar{\Omega}}=\sup \{|w(\bx)|; \bx\in\bar{\Omega}\}.
\end{align*}
 Finally, there exist constants $a_0, g_0$ and $g_1$ such that
 \begin{align} \nonumber
 & 0<a_0\leq a(\by) \ \forall \by\in \bar{\omega},
 \\ \label{g_acotado}
 & 0<g_0\leq g(\varepsilon)(\bx) \leq g_1 \ \forall \bx\in\bar{\Omega} \ \textrm{and} \ \forall \ \var, 0<\varepsilon\leq \varepsilon_0.
 \end{align}
\end{theorem}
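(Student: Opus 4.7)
The plan is to compute everything directly from the scaled parametrization $\bTheta(\bx)=\btheta(\by)+\var x_3\ba_3(\by)$ (the scaled counterpart of \eqref{bTheta}) and to propagate the $\var$-expansions through the definitions of the metric, its determinant, and the Christoffel symbols, exploiting the $\mathcal{C}^3$ regularity of $\btheta$ to ensure every remainder is of the asserted order uniformly on $\bar{\Omega}$. First, differentiating $\bTheta$ I obtain the covariant basis $\bg_\alpha(\var)=\ba_\alpha+\var x_3\d_\alpha\ba_3$ and $\bg_3(\var)=\ba_3$, and the Weingarten formula $\d_\alpha\ba_3=-b_\alpha^\sigma\ba_\sigma$ (which follows from $|\ba_3|=1$ and the definition of $b_{\alpha\beta}$) rewrites this as $\bg_\alpha(\var)=\ba_\alpha-\var x_3 b_\alpha^\sigma\ba_\sigma$.

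From here, the covariant metric has the block structure $g_{\alpha 3}(\var)=0$, $g_{33}(\var)=1$, and $g_{\alpha\beta}(\var)=a_{\alpha\beta}-2\var x_3 b_{\alpha\beta}+\var^2 x_3^2 b_\alpha^\sigma b_{\sigma\beta}$, so that $g(\var)=\det(g_{\alpha\beta}(\var))=a+O(\var)$ uniformly on $\bar{\Omega}$. Inverting the block matrix yields the contravariant metric $g^{\alpha 3}(\var)=0$, $g^{33}(\var)=1$, $g^{\alpha\beta}(\var)=a^{\alpha\beta}+2\var x_3 b^{\alpha\beta}+O(\var^2)$, and hence the contravariant basis $\bg^\sigma(\var)=\ba^\sigma+\var x_3 b^\sigma_\tau \ba^\tau+O(\var^2)$, $\bg^3(\var)=\ba_3$, with all remainders controlled by $\|\cdot\|_{0,\infty,\bar{\Omega}}$ thanks to the smoothness of the curvature data.

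The Christoffel identities follow from $\Gamma^p_{ij}(\var)=\bg^p(\var)\cdot\d_i\bg_j(\var)$. The vanishing $\Gamma^3_{\alpha 3}(\var)=\Gamma^p_{33}(\var)=0$ is immediate: $\bg_3(\var)=\ba_3$ has zero $x_3$-derivative, and $\ba_3\cdot\d_\alpha\ba_3=0$. The exact formula $\Gamma^3_{\alpha\beta}(\var)=b_{\alpha\beta}-\var x_3 b_\alpha^\sigma b_{\sigma\beta}$ is obtained by computing $\ba_3\cdot\d_\alpha\bg_\beta(\var)$ and applying $\ba_3\cdot\d_\alpha\ba_\sigma=b_{\alpha\sigma}$, with the cross term $(\d_\alpha b_\beta^\sigma)\ba_3\cdot\ba_\sigma$ vanishing by orthogonality. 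For $\Gamma^\sigma_{\alpha\beta}(\var)$, substituting the expansions of $\bg^\sigma(\var)$ and $\d_\alpha\bg_\beta(\var)$ reproduces the leading term $\Gamma^\sigma_{\alpha\beta}=\ba^\sigma\cdot\d_\alpha\ba_\beta$; the $O(\var)$ coefficient simplifies, using the definition \eqref{b_barra} of $b_\beta^\sigma|_\alpha$, to $-x_3 b_\beta^\sigma|_\alpha$. An entirely analogous expansion gives $\Gamma^\sigma_{\alpha 3}(\var)=-b_\alpha^\sigma-\var x_3 b_\alpha^\tau b_\tau^\sigma+O(\var^2)$. Since the $\var^0$-parts of all $\Gamma^p_{\alpha\beta}(\var)$ are independent of $x_3$, the bound $\d_3\Gamma^p_{\alpha\beta}(\var)=O(\var)$ is automatic.

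For the closing bounds I would invoke compactness. The function $a(\by)=|\ba_1\wedge\ba_2|^2$ is continuous and strictly positive on $\bar{\omega}$ by the linear independence hypothesis, so $a_0:=\min_{\bar{\omega}}a>0$. The map $(\var,\bx)\mapsto g(\var)(\bx)$ is jointly continuous on $[0,\var_0]\times\bar{\Omega}$; it equals $a(\by)$ at $\var=0$ and remains strictly positive for $0<\var\le\var_0$ because Theorem \ref{var_0} ensures $\bTheta$ is a $\mathcal{C}^1$-diffeomorphism there, so $\det(\bg_1,\bg_2,\bg_3)>0$ and hence $g(\var)>0$. Compactness of $[0,\var_0]\times\bar{\Omega}$ then gives uniform constants $0<g_0\le g(\var)\le g_1$, establishing \eqref{g_acotado}. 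The only real obstacle is the bookkeeping needed to recognize the combination $\d_\alpha b_\beta^\sigma+\Gamma^\sigma_{\alpha\tau}b_\beta^\tau-\Gamma^\tau_{\alpha\beta}b^\sigma_\tau$ produced by collecting the $\var$-coefficient of $\Gamma^\sigma_{\alpha\beta}(\var)$ as precisely the covariant derivative $b_\beta^\sigma|_\alpha$ defined in \eqref{b_barra}; no deeper idea is needed.
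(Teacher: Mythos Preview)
Your proof is correct and is essentially the standard direct computation. The paper itself does not supply a proof of this statement; it merely records it as a recollection of Theorem~3.3-1 in \cite{Ciarlet4b}, so there is no in-paper argument to compare against. Your expansion of the covariant and contravariant bases from the scaled map $\bTheta(\bx)=\btheta(\by)+\var x_3\ba_3(\by)$, together with the Weingarten relation and the compactness argument for the uniform bounds on $a$ and $g(\var)$, reproduces exactly the line of reasoning one finds in that reference.
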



We now include the following result that will be used repeatedly in what follows (see Theorem 3.4-1, \cite{Ciarlet4b}, for details).

\begin{theorem} \label{th_int_nula}
 Let $\omega$ be a domain in $\mathbb{R}^2$ with boundary $\gamma$, let $\Omega=\omega\times (-1,1)$, and let $g\in L^p(\Omega)$, $p>1$, be a function such that 
 \begin{align*}
 \intO g \d_3v dx=0, \ \textrm{for all} \ v\in \mathcal{C}^{\infty}(\bar{\Omega}) \ \textrm{with} \ v=0 \on \gamma\times[-1,1]. 
 \end{align*}
 Then $g=0.$
\end{theorem}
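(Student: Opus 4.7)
My plan is to reduce this to the classical du Bois-Reymond lemma in two steps: first, by restricting to compactly supported test functions, I would show that $g$ depends only on the tangential variables $\by=(y_1,y_2)$; then, by exploiting the extra freedom in the class of admissible test functions (which are only required to vanish on the lateral face $\gamma\times[-1,1]$, and not on $\Gamma_+\cup\Gamma_-$), I would conclude that the resulting function of $\by$ must vanish identically.

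For the first step, I would plug into the hypothesis any $v\in\mathcal{C}^{\infty}_c(\Omega)$. Such a $v$ trivially vanishes on $\gamma\times[-1,1]$, so the assumption becomes $\int_{\Omega} g\,\partial_3 v\,dx=0$ for every $v\in\mathcal{C}^{\infty}_c(\Omega)$. This is precisely the statement that $\partial_3 g=0$ in $\mathcal{D}'(\Omega)$. Combined with $g\in L^p(\Omega)$ and the product structure $\Omega=\omega\times(-1,1)$, a standard mollification argument in the $x_3$ variable alone, together with Fubini's theorem, yields the existence of $\tilde g\in L^p(\omega)$ with $g(\by,x_3)=\tilde g(\by)$ for almost every $(\by,x_3)\in\Omega$.

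For the second step, I would pick any $\psi\in\mathcal{C}^{\infty}_c(\omega)$ and set $v(\by,x_3):=\psi(\by)\,x_3$. Then $v\in\mathcal{C}^{\infty}(\bar\Omega)$ and, because $\psi$ has compact support inside $\omega$, $v$ vanishes on $\gamma\times[-1,1]$, so it is an admissible test function. Feeding this $v$ back into the hypothesis, using $\partial_3 v=\psi(\by)$ and the representation $g=\tilde g(\by)$, gives $2\int_{\omega}\tilde g(\by)\psi(\by)\,d\by=0$ for every $\psi\in\mathcal{C}^{\infty}_c(\omega)$. The classical variational lemma then forces $\tilde g=0$ a.e.\ in $\omega$, and hence $g=0$ a.e.\ in $\Omega$.

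The only delicate point is the passage from $\partial_3 g=0$ in $\mathcal{D}'(\Omega)$ to the existence of an $L^p(\omega)$ representative $\tilde g$ satisfying $g(\by,x_3)=\tilde g(\by)$ a.e.; the regularity $g\in L^p(\Omega)$ with $p>1$ is essential here, since it enables the slice-wise application of the one-dimensional fact that a distribution with zero derivative is (a.e.\ equal to) a constant, together with a Fubini reassembly. Once that representation is available, the choice $v=\psi(\by)\,x_3$ in the second step immediately closes the argument.
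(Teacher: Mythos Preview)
Your proof is correct. Note, however, that the paper does not give its own proof of this statement: it simply records the result and refers the reader to Theorem~3.4-1 in \cite{Ciarlet4b} for the details. Your two-step argument---first using $v\in\mathcal{C}^\infty_c(\Omega)$ to obtain $\partial_3 g=0$ in $\mathcal{D}'(\Omega)$ and hence $g=\tilde g(\by)$, then testing with $v(\by,x_3)=\psi(\by)x_3$ for $\psi\in\mathcal{C}^\infty_c(\omega)$ to force $\tilde g=0$---is the standard route and is essentially what one finds in that reference, so there is nothing to contrast.
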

\begin{remark}
This result holds if $\intO g \d_3v dx=0$ for all $v\in H^1(\Omega)$ such that $v=0$ in $\Gamma_0$. It is in this way that we will use this result in the following.
\end{remark}


We now introduce the average with respect to the transversal variable, which plays a major role in this study. To that end, let $\bv$ represent real or vectorial functions defined  almost everywhere over $\Omega=\omega\times (-1,1)$. We define the transversal average as
    \begin{align*}
    \bar{\bv}(\by)=\frac1{2}\int_{-1}^{1}\bv(\by,x_3)dx_3
    \end{align*}
    for almost all $\by\in\omega$.
 Given $\beeta=(\eta_i)\in [H^1(\omega)]^3,$ let
   \begin{align} \label{def_gab}
   \gab(\beeta):= \frac{1}{2}(\d_\beta\eta_\alpha + \d_\alpha\eta_\beta) - \Gamma_{\alpha\beta}^\sigma\eta_\sigma -  b_{\alpha\beta}\eta_3,
   \end{align}
   denote the covariant components of the linearized change of metric tensor associated with a displacement field $\eta_i\ba^i$ of the surface $S$.
Next theorem will show some results related with the transversal averages that will be useful in the next section.  
    \begin{theorem}\label{Th_medias} 
    Let $\omega$ be a domain in $\mathbb{R}^2$, let $\Omega=\omega\times(-1,1)$ and $T>0$.
    \begin{enumerate}[label={{(\alph*)}}, leftmargin=0em ,itemindent=3em]
    \item Let $v\in\WLO$. Then $\bar{v}(\by)$ is finite for almost all  $\by\in\omega$,  belongs to $\WLo$, and
    \begin{align}\nonumber
    |\bar{v}|_{\WLo}\leq\frac{1}{\sqrt{2}}|v|_{\WLO}.
    \end{align}
    If $\d_3v=0$ in the distributions sense $\left(\int_{\Omega}v \d_3{\varphi} dx=0 \ \forall {\varphi} \in \mathcal{D}(\Omega) \right)$ then $v$ does not depend on $x_3$ and
    \begin{align}\nonumber
    v(\by,x_3)=\bar{v}(\by) \ \textrm{for almost all } \ (\by,x_3)\in\Omega.
    \end{align}
    \item Let $v\in \WHO$. Then $\bar{v}\in \WHo$, $\d_\alpha\bar{v}=\overline{\d_\alpha v}$ and
    \begin{align}\nonumber
    ||\bar{v}||_{\WHo}\leq\frac{1}{\sqrt{2}} ||v||_{\WHO}.
    \end{align}
    Let $\gamma_0$ be a subset $\d\gamma$-measurable of $\gamma$. If $v=0$ on $\gamma_0\times[-1,1]$ then $\bar{v}=0$ on $\gamma_0$; in particular, $\bar{v}\in H^{1}(0,T,H^1_0(\omega))$ if $v=0$ on $\gamma\times[-1,1]$.
    \end{enumerate}
    \end{theorem}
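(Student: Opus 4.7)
The plan is to establish both parts through a combination of Cauchy--Schwarz estimates, Fubini's theorem, and standard distributional arguments, treating the time dependence by exploiting the commutation between time derivatives and integration with respect to $x_3$.

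For part (a), I would start from the pointwise Cauchy--Schwarz bound
\begin{align*}
|\bar{v}(\by)|^2 = \left|\frac{1}{2}\int_{-1}^{1}v(\by,x_3)\,dx_3\right|^2 \leq \frac{1}{4}\cdot 2 \int_{-1}^{1}|v(\by,x_3)|^2\,dx_3,
\end{align*}
which upon integration over $\omega$ and an application of Fubini's theorem yields $|\bar{v}|_{L^2(\omega)} \leq \tfrac{1}{\sqrt{2}}|v|_{L^2(\Omega)}$. Since integration in $x_3$ commutes with the time derivative in the distributional sense, we obtain $\dot{\bar{v}} = \overline{\dot{v}}$, and applying the same bound to $\dot{v}$ gives the stated estimate in $\WLo$. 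For the second assertion, if $\d_3 v = 0$ in $\mathcal{D}'(\Omega)$, testing with tensorized test functions $\varphi(\by)\psi(x_3)$ and invoking the one-dimensional fact that a distribution on $(-1,1)$ with zero derivative is constant shows that $v(\by,\cdot)$ is, for a.e.\ $\by$, constant in $x_3$; that constant must coincide with $\bar{v}(\by)$.

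For part (b), the idea is that the partial derivative $\d_\alpha$ (for $\alpha\in\{1,2\}$) commutes with the $x_3$-average. Concretely, for $v \in H^1(0,T;H^1(\Omega))$ the component $\d_\alpha v$ lies in $\WLO$, so part (a) applies and $\overline{\d_\alpha v}\in \WLo$. Testing $\d_\alpha \bar{v}$ against $\varphi\in\mathcal{D}(\omega)$ and using Fubini,
\begin{align*}
\langle \d_\alpha\bar{v},\varphi\rangle = -\into \bar{v}\,\d_\alpha\varphi\,d\by = -\frac{1}{2}\intO v\,\d_\alpha\varphi\,dx = \frac{1}{2}\intO \d_\alpha v\,\varphi\,dx = \into \overline{\d_\alpha v}\,\varphi\,d\by,
\end{align*}
so $\d_\alpha\bar{v} = \overline{\d_\alpha v}$ a.e.\ in $\omega$. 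The norm inequality in $\WHo$ then follows by summing the estimates from (a) applied to $v$ and to each $\d_\alpha v$. For the trace assertion, I would invoke the continuity of the trace operator $\gamma_0\colon H^1(\Omega)\to H^{1/2}(\gamma\times[-1,1])$ together with the commutation of the $x_3$-average with the trace on the lateral boundary $\gamma\times[-1,1]$; equivalently, approximating $v$ by smooth functions and averaging shows that $v\big|_{\gamma_0\times[-1,1]} = 0$ implies $\bar{v}\big|_{\gamma_0} = 0$, and the resulting membership $\bar{v}\in H^1(0,T;H^1_0(\omega))$ when $\gamma_0 = \gamma$ is immediate.

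The only slightly delicate point is the second claim in (a): namely, lifting a distributional identity $\d_3 v = 0$ on the product domain $\Omega$ to a pointwise identification $v(\by,x_3)=\bar{v}(\by)$. I would handle this by first establishing the one-dimensional result on $(-1,1)$ for a.e.\ slice (using a density argument with tensor test functions) and then identifying the $x_3$-independent constant with $\bar{v}(\by)$ via its defining integral. Everything else reduces to Fubini together with the $L^2$ Cauchy--Schwarz inequality, so no further conceptual obstacle is anticipated.
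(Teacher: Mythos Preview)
Your proposal is correct and follows essentially the same route as the paper. The paper does not give a self-contained proof but simply remarks that the result is an extension of Theorem 4.2-1 in \cite{Ciarlet4b} to Bochner spaces, obtained by adding a time integral and treating $v$ and $\dot v$ alternately; your Cauchy--Schwarz/Fubini estimate, the commutation of $\partial_\alpha$ and of $\partial_t$ with the $x_3$-average, and the distributional argument for $\partial_3 v=0$ are precisely the steps of that classical proof, so you have filled in exactly the details the paper omits.
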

    
    \begin{remark} 
    This theorem is an extension of the parts $(a)$ and $(b)$ in Theorem 4.2-1, \cite{Ciarlet4b} and its proof follows straightforward from the result presented there. The main difference is that we are interested in obtaining the corresponding conclusions in the Bochner spaces. Therefore, most of the changes of the proof consist in adding an additional integral with respect to the time variable and proving the statements for the functions and their time derivatives, alternately, over the spaces $\LLO,$ $\LLo,$ $\LHO,$ $\LHo$.
    \end{remark}

 Next, we introduce two theorems that can be also seen as extensions of the Theorems 5.2-1  and 5.2-2, \cite{Ciarlet4b} defined over the corresponding Bochner spaces. Therefore, their proof follow similar arguments used in the results available. Firstly, let us define for each $\bv\in\WHOt$ the functions $\gab(\bv)\in \WLO$, $\rab(\bv)\in\WHMO$ and $\eab^1(\var;\bv)\in\WLO$ defined by
 \begin{align}\label{gab_g}
  \gab(\bv)&:= \frac{1}{2}(\d_\beta v_\alpha + \d_\alpha v_\beta) - \Gamma_{\alpha\beta}^\sigma v_\sigma -  b_{\alpha\beta}v_3,
  \\ \nonumber
  \rho_{\alpha\beta}(\bv)&:= \d_{\alpha\beta}v_3 - \Gamma_{\alpha\beta}^\sigma \d_\sigma v_3 - b_\alpha^\sigma b_{\sigma\beta} v_3 
  \\ \label{rab_g}
  & \qquad + b_\alpha^\sigma (\d_\beta v_\sigma- \Gamma_{\beta\sigma}^\tau v_\tau) + b_\beta^\tau(\d_\alpha v_\tau-\Gamma_{\alpha\tau}^\sigma v_\sigma ) + b^\tau_{\beta|\alpha} v_\tau,
  \\\label{eab2}
  \eab^1(\var;\bv)&:=\frac{1}{\var}\gab(\bv) + x_3(b_{\beta|\alpha}^\sigma v_\sigma  + b_\alpha^\sigma b_{\sigma\beta}v_3),
 \end{align}
 where the functions  $b_{\beta|\alpha}^\sigma$ are given by the expression introduce in (\ref{b_barra}).

 \begin{theorem}\label{Th_521}
  Let us identify  $\Gamma^\sigma_{\alpha\beta}, b_{\alpha\beta}, b_\alpha^\beta\in \mathcal{C}^0(\bar{\omega})$ with functions in $\mathcal{C}^0(\bar{\Omega})$ and let us consider  $\var_0$ defined in the Theorem  \ref{var_0}. Then, there exists a constant  $\tilde{C}>0$ such that for all  $\var$, $0<\var\leq \var_0$ and all $\bv\in H^1(0,T; [H^1(\Omega)]^3)$, the scaled linearized strains $\eab(\var;\bv)$ (see (\ref{eab})) satisfy
  \begin{align*}
  \left\|\frac{1}{\var}\eab(\var;\bv) - \eab^1(\var;\bv) \right\|_{\WLO}&\leq \tilde{C} \var \sum_{\alpha}|v_\alpha|_{\WLO},
  \\
  \left\|\frac{1}{\var}\d_3\eab(\var;\bv)+ \rab(\bv)\right\|_{\WHMO}&\leq \tilde{C}\left(\sum_i|e_{i||3}(\var;\bv)|_{\WLO} \right.
  \\
  \quad + \var\sum_\alpha|v_\alpha|_{\WLO} &+ \left. \var||v_3||_{\WHO}  \right).
  \end{align*}
  \end{theorem}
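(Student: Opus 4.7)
The plan is to establish both inequalities pointwise in $t\in[0,T]$ using the asymptotic expansions of the scaled Christoffel symbols from Theorem \ref{Th_simbolos2D_3D}, and then pass to the Bochner norms by applying the resulting linear estimates to $\bv(t)$ and $\dot{\bv}(t)$ and integrating in $t$.

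\textbf{First inequality.} Inserting the expansions $\Gamma^\sigma_{\alpha\beta}(\var) = \Gamma^\sigma_{\alpha\beta} - \var x_3 b^\sigma_\beta|_\alpha + O(\var^2)$ and $\Gamma^3_{\alpha\beta}(\var) = b_{\alpha\beta} - \var x_3 b_\alpha^\sigma b_{\sigma\beta}$ into the definition (\ref{eab}) of $\eab(\var;\bv)$, and comparing with (\ref{def_gab}) and (\ref{eab2}), one finds
\begin{align*}
\frac{1}{\var}\eab(\var;\bv) - \eab^1(\var;\bv) = \var R^p_{\alpha\beta}(\var)\, v_p,
\end{align*}
where the remainder $R^p_{\alpha\beta}(\var)$ is bounded in $\|\cdot\|_{0,\infty,\bar{\Omega}}$ uniformly in $\var\in(0,\var_0]$. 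Taking the $L^2(\Omega)$ norm yields the pointwise-in-$t$ version of the claimed estimate.

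\textbf{Second inequality.} Differentiating (\ref{eab}) with respect to $x_3$ gives
\begin{align*}
\frac{1}{\var}\d_3\eab(\var;\bv) = \frac{1}{2}\left(\d_\beta\frac{\d_3 v_\alpha}{\var} + \d_\alpha\frac{\d_3 v_\beta}{\var}\right) - \frac{1}{\var}\d_3\Gamma^p_{\alpha\beta}(\var)\,v_p - \Gamma^p_{\alpha\beta}(\var)\frac{\d_3 v_p}{\var}.
\end{align*}
The key substitutions come from the very definitions of the transverse strain components: $\d_3 v_\alpha/\var = 2e_{\alpha||3}(\var;\bv) - \d_\alpha v_3 + 2\Gamma^p_{\alpha 3}(\var) v_p$ and $\d_3 v_3/\var = e_{3||3}(\var;\bv)$. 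After substitution, and after applying the asymptotic formulas from Theorem \ref{Th_simbolos2D_3D} for $\Gamma^p_{\alpha\beta}(\var)$, $\Gamma^\sigma_{\alpha 3}(\var)$ and $\d_3\Gamma^p_{\alpha\beta}(\var)=O(\var)$, each resulting term falls into one of two categories: (i) terms proportional to $e_{i||3}(\var;\bv)$ or to its tangential derivatives $\d_\gamma e_{i||3}(\var;\bv)$; (ii) terms involving only $v_\alpha$, $v_3$ and their tangential derivatives, notably the second derivative $-\d_\alpha\d_\beta v_3$. Using the definition (\ref{b_barra}) of $b^\sigma_\beta|_\alpha$, the zero-order (in $\var$) part of category (ii) reassembles exactly into $-\rab(\bv)$ given by (\ref{rab_g}), while the remainder of category (ii) carries an explicit factor of $\var$ and is bounded in $L^2(\Omega)$ by $\var\sum_\alpha|v_\alpha|_{L^2(\Omega)} + \var\|v_3\|_{H^1(\Omega)}$. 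Category (i) is estimated in $H^{-1}(\Omega)$: multiplicative occurrences are bounded by $\sum_i|e_{i||3}(\var;\bv)|_{L^2(\Omega)}$, while differentiated occurrences use the duality inequality $|\d_\gamma w|_{H^{-1}(\Omega)}\leq |w|_{L^2(\Omega)}$. Summing yields the pointwise-in-$t$ version of the second estimate.

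\textbf{Lift to Bochner norms and main obstacle.} Both pointwise estimates are linear in $\bv$ with a constant $\tilde{C}$ depending only on $\btheta$ and $\var_0$; applying them separately to $\bv(t)$ and to $\dot{\bv}(t)$ and integrating the squared norms over $(0,T)$ gives the stated $\WLO$ and $\WHMO$ bounds. The principal difficulty is the bookkeeping in step two: verifying that the many curvature and Christoffel products left over after the $e_{i||3}$-substitutions indeed collapse to exactly $-\rab(\bv)$ at leading order requires the precise form of the covariant derivative $b^\sigma_\beta|_\alpha$ in (\ref{b_barra}) and some care with the symmetrization in $\alpha,\beta$.
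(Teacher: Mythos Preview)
Your proposal is correct and matches the paper's approach: the paper does not give a proof of this theorem but simply remarks that it is the extension of Theorem~5.2-1 in \cite{Ciarlet4b} to the Bochner spaces, and your argument is precisely that---reproduce Ciarlet's pointwise-in-$t$ computation via the Christoffel expansions of Theorem~\ref{Th_simbolos2D_3D}, then lift linearly to $\bv(t)$ and $\dot{\bv}(t)$. One minor remark: in your first step the remainder is actually $\var R^\sigma_{\alpha\beta}(\var)v_\sigma$ (no $v_3$ term, since $\Gamma^3_{\alpha\beta}(\var)=b_{\alpha\beta}-\var x_3 b_\alpha^\sigma b_{\sigma\beta}$ is exact), which is why the right-hand side of the first estimate involves only $\sum_\alpha|v_\alpha|$.
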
   
  
  \begin{theorem}\label{Th_522}
Let $(\bu(\var))_{\var>0}$ be a sequence of functions $\bu(\var)\in\WVO$ such that
  \begin{align*}
  \bu(\var)&\deb\bu \en \WHOt,
  \\
  \frac{1}{\var}\eij(\var)&\deb \eij^1 \en \WLO,
  \end{align*}
  when $\var\to0.$ Then,
  \begin{enumerate}[label=(\alph*), leftmargin=0em ,itemindent=3em]
  \item   $\bu$ is independent of the transversal variable  $x_3$.
  \item   $\bar{\bu}\in H^1(\omega)\times H^1(\omega) \times H^2(\omega)$ with $\bar{u}_i=\d_\nu \bar{u}_3=0$ in $\gamma_0$ ($\d_\nu$ denotes the outer normal derivative operator along $\gamma$).
  \item  $\gab(\bu)=0$.
  \item  $\rab(\bu)\in \WLO$ and $\rab(\bu)=-\d_3\eab^1.$
  
  \item Moreover, if there exist functions $\kappa_{\alpha\beta}\in\WHMO$ such that $\rab(\bu(\var))\to\kappa_{\alpha\beta}$ in $\WHMO$ when $\var\to 0$, then
  \begin{align*}
  \bu(\var)&\to \bu \en \WHOt,\\
  \rab(\bu)&=\kappa_{\alpha\beta} \ \textrm{hence,} \ \kappa_{\alpha\beta}\in\WLO.
  \end{align*}
  \end{enumerate}
  \end{theorem}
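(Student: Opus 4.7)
My plan is to first extract the strong convergence $\eij(\var)\to 0$ in $\WLO$—which follows because the weak convergence $\frac{1}{\var}\eij(\var)\deb \eij^1$ forces $\|\frac{1}{\var}\eij(\var)\|_{\WLO}$ to be bounded, hence $\eij(\var)=\var\cdot\frac{1}{\var}\eij(\var)\to 0$ in norm—and then read off (a), (c), and (d) from it. For (a), the identity $\edtres(\var;\bu(\var))=\frac{1}{\var}\d_3 u_3(\var)$ gives $\d_3 u_3(\var)\to 0$ strongly in $\WLO$; combined with $\d_3 u_3(\var)\deb\d_3 u_3$ (a consequence of the hypothesis $\bu(\var)\deb\bu$ in $\WHOt$), uniqueness of the limit yields $\d_3 u_3=0$. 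Solving the expression for $\eatres(\var;\bu(\var))$ for $\d_3 u_\alpha(\var)$ and using that $\Gamma_{\alpha 3}^p(\var)u_p(\var)$ stays bounded gives analogously $\d_3 u_\alpha=0$, so $\bu$ is $x_3$-independent. For (c), I would pass to the limit in $\eab(\var;\bu(\var))\to 0$, using the uniform convergences $\Gamma_{\alpha\beta}^\sigma(\var)\to\Gamma_{\alpha\beta}^\sigma$ and $\Gamma_{\alpha\beta}^3(\var)\to b_{\alpha\beta}$ from Theorem \ref{Th_simbolos2D_3D}, to identify the weak limit as $\gab(\bu)$, which must therefore vanish.

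For (b), since $\bu$ is $x_3$-independent by (a) and lies in $V(\Omega)$, the transversal-average part of Theorem \ref{Th_medias} gives $\bar{\bu}\in H^1(0,T;[H^1(\omega)]^3)$ with $\bar{u}_i=0$ on $\gamma_0$; the additional regularity $\bar{u}_3\in H^2(\omega)$ and the condition $\d_\nu\bar{u}_3=0$ on $\gamma_0$ are then a consequence of an infinitesimal rigid-displacement lemma on the surface $S$, applied to $\gab(\bar{\bu})=0$ pointwise a.e. in $t$, the conclusion being compatible with the time regularity. For (d), I apply Theorem \ref{Th_521} to $\bv=\bu(\var)$: the right-hand side vanishes in the limit because each $\eatres(\var;\bu(\var))$ and $\edtres(\var;\bu(\var))$ tends to zero strongly in $\WLO$ (by the first paragraph) and the $u_i(\var)$ are bounded in $\WHO$, while on the left $\frac{1}{\var}\d_3\eab(\var;\bu(\var))=\d_3\bigl(\frac{1}{\var}\eab(\var;\bu(\var))\bigr)\deb\d_3 \eab^1$ and $\rab(\bu(\var))\deb\rab(\bu)$ in $\WHMO$. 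Passing to the limit in the estimate yields $\rab(\bu)=-\d_3 \eab^1$ in $\WHMO$, and this quantity belongs to $\WLO$ because $\eab^1\in\WLO$ by hypothesis.

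The main obstacle is (e). Once the strong convergence of $\bu(\var)$ is in hand, the identification $\kappa_{\alpha\beta}=\rab(\bu)\in\WLO$ is immediate from (d) by uniqueness of weak limits in $\WHMO$; the real task is to upgrade $\bu(\var)\deb\bu$ to strong convergence in $\WHOt$. Following the elastic blueprint for flexural shells, I would use the strengthened $\WHMO$-convergence of $\rab(\bu(\var))$ together with both estimates of Theorem \ref{Th_521} to promote the weak convergence $\frac{1}{\var}\eij(\var)\deb \eij^1$ into a strong convergence in $\WLO$, and then invoke the uniform ellipticity (\ref{elipticidadA_eps}) together with a scaled three-dimensional Korn-type inequality on $\Omega$ with partial Dirichlet data on $\Gamma_0$, uniform in $\var$, to translate strong convergence of the scaled strains into strong convergence of $\bu(\var)$ in $[H^1(\Omega)]^3$ pointwise a.e. in $t$; an analogous argument for $\dot{\bu}(\var)$ plus integration in time then delivers the full $\WHOt$-convergence. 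The delicate points here are producing the uniform Korn estimate with the correct $\var$-scaling and tracking the time derivative in parallel so that the convergence lives in the full Bochner $H^1$-in-time space rather than only in $L^2(0,T;[H^1(\Omega)]^3)$.
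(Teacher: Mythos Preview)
The paper does not actually prove this theorem: it states explicitly that Theorems \ref{Th_521} and \ref{Th_522} are time-dependent extensions of Theorems 5.2-1 and 5.2-2 in \cite{Ciarlet4b} and that ``their proof follow similar arguments used in the results available.'' So your proposal must be measured against Ciarlet's argument, carried over to Bochner spaces. Your treatment of (a), (c), and (d) is essentially that argument and is fine. There are, however, two genuine gaps.

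\textbf{Part (b).} The claim that $\bar{u}_3\in H^2(\omega)$ and $\partial_\nu\bar{u}_3=0$ on $\gamma_0$ follow ``from an infinitesimal rigid-displacement lemma on $S$, applied to $\gamma_{\alpha\beta}(\bar{\bu})=0$'' is incorrect. The rigid-displacement lemma requires \emph{both} $\gamma_{\alpha\beta}=0$ and $\rho_{\alpha\beta}=0$ and yields a characterisation, not regularity; $\gamma_{\alpha\beta}(\bar{\bu})=0$ alone gives neither $H^2$-regularity of $\bar u_3$ nor the normal-derivative condition. In Ciarlet's proof the logical order is the opposite of what you propose: one first establishes (d), i.e.\ $\rho_{\alpha\beta}(\bu)=-\partial_3 e^1_{\alpha\|\beta}\in L^2(\Omega)$, and since $\rho_{\alpha\beta}(\bar{\bu})=\partial_{\alpha\beta}\bar u_3+(\text{first-order terms in }\bar{\bu})$, this is what forces $\bar u_3\in H^2(\omega)$. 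The boundary condition $\partial_\nu\bar u_3=0$ on $\gamma_0$ comes from a separate trace argument: from $e_{\alpha\|3}(\varepsilon;\bu(\varepsilon))\to 0$ one extracts that $\frac{1}{\varepsilon}\partial_3 u_\alpha(\varepsilon)$ is bounded in $L^2(\Omega)$ and converges weakly to $-(\partial_\alpha u_3+2b_\alpha^\sigma u_\sigma)$; combining this with $u_\alpha(\varepsilon)=0$ on $\Gamma_0$ then forces $\partial_\alpha\bar u_3+2b_\alpha^\sigma\bar u_\sigma=0$ on $\gamma_0$, hence $\partial_\nu\bar u_3=0$ there.

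\textbf{Part (e).} Your plan to upgrade $\frac{1}{\varepsilon}e_{i\|j}(\varepsilon)\rightharpoonup e^1_{i\|j}$ to strong convergence and then apply the scaled Korn inequality (\ref{Korn}) to $\bu(\varepsilon)-\bu$ does not work: you would need to control $\frac{1}{\varepsilon}e_{i\|j}(\varepsilon;\bu)$, and for $(i,j)=(\alpha,3)$ this equals $\frac{1}{2\varepsilon}\partial_\alpha u_3-\frac{1}{\varepsilon}\Gamma_{\alpha3}^\sigma(\varepsilon)u_\sigma$, which blows up as $\varepsilon\to 0$ unless $\partial_\alpha u_3+2b_\alpha^\sigma u_\sigma=0$, which is not available. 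Ciarlet's route bypasses Korn entirely: one shows strong convergence of each $\partial_j u_i(\varepsilon)$ in $L^2(\Omega)$ using the Lions-type equivalence $|v|_{0,\Omega}\sim \|v\|_{-1,\Omega}+\sum_k\|\partial_k v\|_{-1,\Omega}$. Rellich gives $u_i(\varepsilon)\to u_i$ in $L^2$, hence $\partial_j u_i(\varepsilon)\to\partial_j u_i$ in $H^{-1}$; the hypothesis $\rho_{\alpha\beta}(\bu(\varepsilon))\to\kappa_{\alpha\beta}$ in $H^{-1}$ then supplies the strong $H^{-1}$-convergence of $\partial_{\alpha\beta}u_3(\varepsilon)$ (the lower-order terms in $\rho_{\alpha\beta}$ converge by compactness), and the relations $e_{i\|j}(\varepsilon;\bu(\varepsilon))\to 0$ feed the remaining second derivatives. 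This is the step that genuinely uses the extra assumption in (e), and once strong $H^1$-convergence is obtained the identification $\kappa_{\alpha\beta}=\rho_{\alpha\beta}(\bu)$ follows as you say.
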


 Finally, in the next theorem we recall a three-dimensional inequality of Korn's type for a family of viscoelastic shells, that can also be found in Theorem 5.3-1, \cite{Ciarlet4b}.

     \begin{theorem}\label{Th_desigKorn}
       Assume that $\btheta \in \mathcal{C}^3(\bar{\omega};\mathbb{R}^3)$ and we consider $\var_0$ defined as in Theorem \ref{var_0}. We consider a family of  shells with thickness $2\varepsilon$ with each having the same  middle surface $S=\btheta(\bar{\omega})$ and with each subjected to a boundary condition of place along a portion of its lateral face having the same set $\btheta(\gamma_0)$ as its middle curve.
         Then there exist a constant $\varepsilon_1$ verifying $0<\varepsilon_1<\varepsilon_0$ and a constant $C>0$ such that, for all $\var$, $0<\varepsilon\leq \varepsilon_1$, the following three-dimensional inequality of Korn's type holds,
         \begin{equation}\label{Korn}
       \left\| \bv\right\| _{1,\Omega}\leq \frac{C}{\var}\left(\sum_{i,j}|\eij(\varepsilon;\bv)|^2_{0,\Omega}\right)^{1/2} \ \forall \bv=(v_i)\in V(\Omega).
         \end{equation}
     \end{theorem}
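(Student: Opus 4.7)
The plan is a contradiction argument, using the compactness/rigidity machinery of Theorems \ref{Th_521}--\ref{Th_522} (applied in the time-independent setting, which is the original setting of Theorems 5.2-1 and 5.2-2 of \cite{Ciarlet4b}). Suppose the stated inequality fails. Then one can extract a sequence $\var^n\in(0,\var_0]$ with $\var^n\downarrow 0$ and $\bv^n\in V(\Omega)$ such that
\begin{align*}
\|\bv^n\|_{1,\Omega}=1 \quad \textrm{and} \quad \frac{1}{\var^n}\left(\sum_{i,j}|\eij(\var^n;\bv^n)|^2_{0,\Omega}\right)^{1/2}\longrightarrow 0.
\end{align*}
By Banach--Alaoglu and Rellich--Kondrachov, up to a subsequence, $\bv^n \deb \bv$ in $[H^1(\Omega)]^3$ and $\bv^n\to\bv$ strongly in $[L^2(\Omega)]^3$; continuity of the trace yields $\bv=\bzero$ on $\Gamma_0$, so $\bv\in V(\Omega)$.

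Next, since $\frac{1}{\var^n}\eij(\var^n;\bv^n)\to 0$ strongly in $L^2(\Omega)$ (hence \emph{a fortiori} weakly), I would invoke Theorem \ref{Th_522} with $\eij^1=0$. Parts (a)--(c) furnish $\bv$ independent of $x_3$, $\bar{\bv}\in H^1(\omega)\times H^1(\omega)\times H^2(\omega)$ with $\bar{v}_i=\d_\nu\bar{v}_3=0$ on $\gamma_0$, and $\gab(\bar{\bv})=0$ in $\omega$. To obtain the further conclusion $\rab(\bar{\bv})=0$ \emph{and} to upgrade weak to strong convergence in $[H^1(\Omega)]^3$, I would apply the second estimate of Theorem \ref{Th_521}: its right-hand side is bounded by $\tilde{C}(o(\var^n)+\var^n\|\bv^n\|_{1,\Omega})\to 0$, while $\frac{1}{\var^n}\d_3\eab(\var^n;\bv^n)\to 0$ in $H^{-1}(\Omega)$ since $\frac{1}{\var^n}\eab(\var^n;\bv^n)\to 0$ in $L^2(\Omega)\hookrightarrow H^{-1}(\Omega)$. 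Hence $\rab(\bv^n)\to 0$ in $H^{-1}(\Omega)$, and part (e) of Theorem \ref{Th_522} (with $\kappa_{\alpha\beta}=0$) delivers both $\rab(\bar{\bv})=0$ and $\bv^n\to\bv$ \emph{strongly} in $[H^1(\Omega)]^3$.

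At this stage, $\bar{\bv}$ satisfies $\gab(\bar{\bv})=\rab(\bar{\bv})=0$ in $\omega$ together with the clamping conditions $\bar{v}_i=\d_\nu\bar{v}_3=0$ on $\gamma_0$, where $meas(\gamma_0)>0$. The classical infinitesimal rigid displacement lemma for a clamped portion of a surface (standard in shell theory, cf.~\cite{Ciarlet4b}) then forces $\bar{\bv}=\bzero$, and the $x_3$-independence of $\bv$ yields $\bv=\bzero$ in $\Omega$. The strong convergence $\bv^n\to\bzero$ in $[H^1(\Omega)]^3$ contradicts the normalization $\|\bv^n\|_{1,\Omega}=1$, completing the argument. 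The principal obstacle in this plan is the passage from weak to strong convergence in $[H^1(\Omega)]^3$: without it, the weak limit $\bv=\bzero$ alone is not enough to produce a contradiction, which is precisely why the quantitative $H^{-1}$-estimate of Theorem \ref{Th_521} on $\rab(\bv^n)$ (and its bookkeeping to check that each term on its right-hand side is $o(1)$ under the normalization) is the key checkpoint of the proof.
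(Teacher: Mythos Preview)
The paper does not give its own proof of this theorem; it simply recalls the result from Theorem 5.3-1 of \cite{Ciarlet4b}. Your contradiction argument---normalizing a violating sequence, applying the (time-independent versions of) Theorems \ref{Th_521}--\ref{Th_522} to get $\gab(\bar{\bv})=\rab(\bar{\bv})=0$ together with strong $[H^1(\Omega)]^3$-convergence, and then invoking the infinitesimal rigid displacement lemma on a surface to force $\bar{\bv}=\bzero$---is correct and is precisely the strategy of Ciarlet's original proof.
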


\section{Asymptotic Analysis. Convergence results as $\var \to 0$}\setcounter{equation}{0} \label{seccion_convergencia}
Firstly, we  recall the two-dimensional equations obtained for a viscoelastic flexural shell as a consequence of the formal asymptotic study made in \cite{intro2}. 

 From the asymptotic analysis made in \cite{intro2}, we show that, if the applied body force density is $O(\var^2)$  and surface tractions density is $O(\var^3)$ in the Problem \ref{problema_orden_fuerzas}, we obtain the two-dimensional variational problem for a viscoelastic flexural shell. Let us remind the definition of the two-dimensional fourth-order tensors that appeared naturally in that study,
\begin{align} \label{tensor_a_bidimensional}
   \a&:=\frac{2\lambda\rho^2 + 4\mu\theta^2}{(\theta + \rho)^2}a^{\alpha\beta}a^{\sigma\tau} + 2\mu\ten, 
  \\ \label{tensor_b_bidimensional}
    \b&:=\frac{2\theta\rho}{\theta + \rho}a^{\alpha\beta}a^{\sigma\tau} + \rho\ten,
     \\ \label{tensor_c_bidimensional}
   \c&:=\frac{2 \left(\theta \Lambda \right)^2}{\theta + \rho} a^{\alpha\beta}a^{\sigma\tau},  
 \end{align}
 where
 \begin{align}\label{Lambda}
 \Lambda:=\left(   \frac{\lambda}{\theta}  - \frac{\lambda+ 2 \mu}{\theta + \rho} \right). 
 \end{align}
 Let
  $
   \rho_{\alpha\beta}(\beeta):= \d_{\alpha\beta}\eta_3 - \Gamma_{\alpha\beta}^\sigma \d_\sigma\eta_3 - b_\alpha^\sigma b_{\sigma\beta} \eta_3 + b_\alpha^\sigma (\d_\beta\eta_\sigma- \Gamma_{\beta\sigma}^\tau \eta_\tau) + b_\beta^\tau(\d_\alpha\eta_\tau-\Gamma_{\alpha\tau}^\sigma\eta_\sigma ) + b^\tau_{\beta|\alpha} \eta_\tau
 $
 and
 $\gab(\beeta):= \frac{1}{2}(\d_\beta\eta_\alpha + \d_\alpha \eta_\beta) - \Gamma_{\alpha\beta}^\sigma \eta_\sigma -  b_{\alpha\beta}\eta_3$,
  respectively
   denote the covariant components of the linearized change of curvature  and linearized change of metric tensors, both associated with a displacement  field $\eta_i \ba^i$ of the surface $S$. In what follows, we assume that the space of inextensional displacements, defined by
   \begin{align*}
    V_F(\omega):= \{ \beeta=(\eta_i) \in H^1(\omega)\times H^1(\omega)\times H^2(\omega); \eta_i=\d_\nu \eta_3=0 \ \textrm{on} \ \gamma_0, \gab(\beeta)=0 \en \omega \}, 
   \end{align*}    
 contains non-trivial functions. Therefore, we can enunciate the two-dimensional variational problem for a linear viscoelastic flexural shell:
\begin{problem}\label{problema_ab}
 Find $\bxi:[0,T] \times\omega \longrightarrow \mathbb{R}^3$ such that,
    \begin{align}\nonumber 
       & \bxi(t,\cdot)\in V_F(\omega) \forallt,\\ \nonumber
      &\frac{1}{3}\int_{\omega} \a\rst(\bxi)\rab(\beeta)\sqrt{a}dy +\frac{1}{3}\int_{\omega}\b\rst(\dot{\bxi})\rab(\beeta)\sqrt{a}dy
      \\  \nonumber
      &- \frac{1}{3}\int_0^te^{-k(t-s)}\into \c \rst(\bxi(s))\rab(\beeta)\sqrt{a}dyds
      \\  \nonumber
      & =\int_{\omega}p^{i}\eta_i\sqrt{a}dy \quad \forall \beeta=(\eta_i)\in V_F(\omega), \ae
       \\ \nonumber
       &\bxi(0,\cdot)=\bxi_0(\cdot),
      \end{align} 
   where we introduced the constant $k$ defined by
   \begin{align}\label{k}
   k:=\frac{\lambda+ 2 \mu}{\theta + \rho},
   \end{align}
   and
   \begin{align}\label{def_p}
    p^{i}(t):=\int_{-1}^{1}\fb^{i,2}(t)dx_3+h_+^{i,3}(t)+h_-^{i,3}(t), \ \textrm{with} \ h_{\pm}^{i,3}(t)=\bh^{i,3}(t,\cdot,\pm 1).
   \end{align}
\end{problem}
The Problem \ref{problema_ab} is well posed and it has existence and uniqueness of solution. Furthermore, we obtained the following result (see \cite{intro2} for details of the proof of the de-scaled version):

\begin{theorem} \label{Th_exist_unic_bid_cero}
Let $\omega$  be a domain in $\mathbb{R}^2$, let $\btheta\in\mathcal{C}^2(\bar{\omega};\mathbb{R}^3)$ be an injective mapping such that the two vectors $\ba_\alpha=\d_\alpha\btheta$ are linearly independent at all points of $\bar{\omega}$. Let $\fb^{i,2}\in L^{2}(0,T; L^2(\Omega)) $, $\bh^{i,3}\in L^{2}(0,T; L^2(\Gamma_1))$, where $\Gamma_1:= \Gamma_+\cup\Gamma_-$. Let  $\bxi_0\in V_F(\omega). $  Then the Problem \ref{problema_ab}, has a unique solution  $\bxi\in H^{1}(0,T;V_F(\omega))$.  In addition to that, if $\dot{\fb}^{i,2}\in L^{2}(0,T; L^2(\Omega)) $, $\dot{\bh}^{i,3}\in L^{2}(0,T; L^2(\Gamma_1))$, then $\bxi\in H^{2}(0,T;V_F(\omega))$. 
\end{theorem}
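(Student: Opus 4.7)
The plan is to reformulate Problem \ref{problema_ab} as a linear Volterra integro-differential equation in the Hilbert space $V_F(\omega)$, and then apply a fixed-point argument to obtain existence and uniqueness. To this end, I would first introduce the bilinear forms
\begin{align*}
A(\bxi,\beeta) &:= \tfrac{1}{3}\into \a\rst(\bxi)\rab(\beeta)\sqrt{a}dy, \\
B(\bxi,\beeta) &:= \tfrac{1}{3}\into \b\rst(\bxi)\rab(\beeta)\sqrt{a}dy, \\
C(\bxi,\beeta) &:= \tfrac{1}{3}\into \c\rst(\bxi)\rab(\beeta)\sqrt{a}dy,
\end{align*}
and the linear form $L(t)(\beeta):=\into p^i(t)\eta_i\sqrt{a}dy$. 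Continuity of $A$, $B$, $C$ on $V_F(\omega)\times V_F(\omega)$ and of $L(t)$ on $V_F(\omega)$ is routine from the definitions of the tensors $\a,\b,\c$ and the components of $\rab$.

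The key analytical ingredient is the coercivity of $B$ on $V_F(\omega)$. Since the viscosity tensor $\b$ is positive definite (analogous to the ellipticity assertion for $(\a)$ used classically for elastic flexural shells, cf. Theorem 2.6-4 in \cite{Ciarlet4b}), and since on the space $V_F(\omega)$ of inextensional displacements the semi-norm $\beeta\mapsto\|\rab(\beeta)\|_{0,\omega}$ is equivalent to the full $V_F(\omega)$-norm (the flexural Korn-type inequality on a surface), we obtain $B(\beeta,\beeta)\geq c\|\beeta\|_{V_F(\omega)}^2$. The same argument yields coercivity of $A$, and continuity of $C$. Identifying $V_F(\omega)$ with its dual via the inner product $B(\cdot,\cdot)$, we introduce bounded linear operators $\mathcal{A},\mathcal{C}\in\mathcal{L}(V_F(\omega))$ and an element $F(t)\in V_F(\omega)$ such that the variational problem is equivalent to the abstract Cauchy problem
\begin{equation*}
\dot{\bxi}(t) = F(t) - \mathcal{A}\bxi(t) + \int_0^t e^{-k(t-s)}\mathcal{C}\bxi(s)ds, \qquad \bxi(0)=\bxi_0.
\end{equation*}

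Integrating in time, this is equivalent to a fixed-point equation $\bxi=\Phi(\bxi)$ on $C([0,T];V_F(\omega))$ with $\Phi$ depending affinely on $\bxi$, and a standard argument using an exponentially weighted norm $\|\bxi\|_\lambda:=\sup_{t\in[0,T]}e^{-\lambda t}\|\bxi(t)\|_{V_F(\omega)}$ with $\lambda$ large enough makes $\Phi$ a strict contraction, yielding a unique solution $\bxi\in C([0,T];V_F(\omega))$. Since $F\in L^2(0,T;V_F(\omega))$ (from the hypotheses on $\fb^{i,2}$ and $\bh^{i,3}$) and the right-hand side of the abstract Cauchy problem then lies in $L^2(0,T;V_F(\omega))$, we read off $\dot{\bxi}\in L^2(0,T;V_F(\omega))$, i.e. $\bxi\in H^1(0,T;V_F(\omega))$.

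For the higher regularity, I would differentiate the abstract equation formally in time, producing an analogous Volterra equation for $\dot{\bxi}$ in which the forcing now involves $\dot{F}$ (finite in $L^2(0,T;V_F(\omega))$ under the stronger hypotheses on $\dot{\fb}^{i,2},\dot{\bh}^{i,3}$) and an initial value $\dot{\bxi}(0)$ determined by evaluating the equation at $t=0$. The same contraction argument then gives $\dot{\bxi}\in H^1(0,T;V_F(\omega))$, hence $\bxi\in H^2(0,T;V_F(\omega))$. The main technical obstacle, and the one worth verifying carefully, is the equivalence $\|\rab(\beeta)\|_{0,\omega}\simeq\|\beeta\|_{V_F(\omega)}$ on $V_F(\omega)$, since everything else follows the standard pattern for linear integro-differential equations in Hilbert space; fortunately this inequality is precisely the one underlying the elastic flexural case and transfers verbatim because $V_F(\omega)$ is defined in exactly the same geometric way.
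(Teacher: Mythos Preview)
Your proposal is sound and follows the standard route for linear Volterra integro\-differential equations in a Hilbert space: reduce the variational problem to an abstract Cauchy problem via Riesz representation (using coercivity of $B$), then apply a contraction argument in a weighted $C([0,T];V_F(\omega))$ norm, and finally bootstrap regularity by differentiating in time. The identification of the two crucial analytical ingredients---positive definiteness of the two-dimensional viscosity tensor $(\b)$ and the flexural Korn-type inequality on $V_F(\omega)$ giving $\|\rab(\beeta)\|_{0,\omega}\simeq\|\beeta\|_{V_F(\omega)}$---is exactly right, and your observation that the latter is purely geometric and carries over unchanged from the elastic theory is correct.

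Regarding comparison: the present paper does \emph{not} actually supply a proof of this theorem; it simply states the result and refers the reader to \cite{intro2} (``see \cite{intro2} for details of the proof of the de-scaled version''). So there is no in-paper argument to weigh your approach against. Your sketch is self-contained and would serve as a legitimate proof. One small caveat worth making explicit in a written version: coercivity of $B$ requires $\rho>0$ (and the manipulations elsewhere in the paper require $\theta>0$), whereas the general hypotheses in Section~\ref{problema} allow $\theta\ge 0$, $\rho\ge 0$; you should state the strict positivity assumption when invoking ellipticity of $(\b)$.
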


 For each $\var>0$,  we assume that the initial condition for the scaled linear strains is
\begin{equation} \label{condicion_inicial_def}
\eij(\var)(0,\cdot)=0,
\end{equation}
this is,  the domain is on its natural state with no strains on it at the beginning of the period of observation.

Now, we present here the main result of this paper, namely that the scaled three-dimensional unknown $\bu(\var)$ converges, as $\var$ tends to zero, towards a limit $\bu$ independent of the transversal variable. Moreover, this limit can be identified with the solution $\bxi=\bar{\bu}$ of the Problem \ref{problema_ab}, posed over the set $\omega$.

   \begin{theorem}\label{Th_convergencia}
   Assume that $\btheta\in\mathcal{C}^3(\bar{\omega};\mathbb{R}^3)$. Consider a family of viscoelastic flexural shells with thickness $2\var$ approaching zero and with each having the same middle surface $S=\btheta(\bar{\omega})$, and let the assumptions on the data be as in Theorem \ref{Th_exist_unic_bid_cero}.
   	 For all $\var$, $0<\varepsilon\leq\varepsilon_0$
   let $\bu(\varepsilon)$ be  the solution of the associated three-dimensional scaled Problem \ref{problema_orden_fuerzas}. Then, there exists a function $\bu\in \WHOt$ satisfying $\bu=\bcero$ on $\Gamma_0:=\gamma_0\times[-1,1]$  such that
   \begin{enumerate}[label={{(\alph*)}}, leftmargin=0em ,itemindent=3em]
   \item $\bu(\varepsilon)\rightarrow \bu$ in $\WHOt$  when $\varepsilon\rightarrow 0$,
   \item $\bu:=(u_i)$ is independent of the transversal variable $x_3$.
   \end{enumerate}
   Furthermore, the average $ \displaystyle\bar{\bu}:= \frac1{2}\int_{-1}^{1}\bu dx_3$ verifies the Problem \ref{problema_ab}. 
   \end{theorem}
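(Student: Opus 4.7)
The strategy follows the Ciarlet--Lods scheme for flexural shells, adapted to the Kelvin--Voigt viscoelastic setting. First I would derive uniform a priori estimates by testing the variational equation in Problem \ref{problema_orden_fuerzas} with $\dot{\bu}(\varepsilon)$ and integrating over $(0,t)$. Using the initial condition \eqref{condicion_inicial_def}, the uniform positive-definiteness of $A^{ijkl}(\varepsilon)$ and $B^{ijkl}(\varepsilon)$ given in Theorem \ref{Th_comportamiento asintotico}, the bounds \eqref{g_acotado} on $g(\varepsilon)$, the $\varepsilon^2$/$\varepsilon^3$ scaling of the forces, and the Korn-type inequality \eqref{Korn}, one gets that $\tfrac{1}{\varepsilon}e_{i\|j}(\varepsilon;\bu(\varepsilon))$ is bounded in $H^1(0,T;L^2(\Omega))$ and $\bu(\varepsilon)$ is bounded in $H^1(0,T;V(\Omega))$. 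Weak compactness then yields a subsequence with $\bu(\varepsilon)\rightharpoonup \bu$ in $\WHOt$ and $\tfrac{1}{\varepsilon}e_{i\|j}(\varepsilon;\bu(\varepsilon))\rightharpoonup e^1_{i\|j}$ in $H^1(0,T;L^2(\Omega))$.

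Next I would apply Theorem \ref{Th_522} to identify structural properties of the limit: $\bu$ is independent of $x_3$, $\bar{\bu}\in V_F(\omega)$ (in particular $\gamma_{\alpha\beta}(\bu)=0$), and $\rho_{\alpha\beta}(\bu)=-\partial_3 e^1_{\alpha\beta}$. Taking test functions of the form $\bv=\beeta-x_3(\partial_\alpha\eta_3+2b_\alpha^\sigma\eta_\sigma)\be^\alpha$ with $\beeta\in V_F(\omega)$ (which annihilate $\gamma_{\alpha\beta}$ and produce the correct curvature terms at leading order via Theorem \ref{Th_521}) and dividing the variational equation by $\varepsilon^2$, the membrane terms vanish because $\gamma_{\alpha\beta}(\beeta)=0$, and only the bending-order terms survive.

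The main obstacle is the identification of the effective 2D constitutive law and, in particular, the emergence of the long-term memory kernel $e^{-k(t-s)}$. The key is that in the limit the transverse scaled strain components $e^1_{\alpha 3}, e^1_{33}$ are not given a priori; they must be eliminated. Passing to the limit in the parts of the variational equation that involve $A^{\alpha\beta 33}(\varepsilon), A^{\alpha 3\sigma 3}(\varepsilon), B^{\alpha\beta 33}(\varepsilon), B^{\alpha 3 \sigma 3}(\varepsilon)$ (and using that $A^{\alpha\beta\sigma 3}(\varepsilon)=B^{\alpha\beta\sigma 3}(\varepsilon)=0$ from \eqref{tensor_terminos_nulos}), one obtains pointwise linear ODEs in time relating $e^1_{\alpha 3}, e^1_{33}$ to $\gamma_{\alpha\beta}(\bu)=0$ and $\rho_{\alpha\beta}(\bu)$. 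Solving these scalar ODEs with the natural zero initial condition produces an explicit convolution with $e^{-kt}$, where $k$ is precisely the constant defined in \eqref{k}, and it is this step that turns the short-memory Kelvin--Voigt law into the long-memory 2D law involving the tensor $c^{\alpha\beta\sigma\tau}$ in \eqref{tensor_c_bidimensional}. Substituting back and integrating in $x_3$ (which produces the factor $1/3$) yields Problem \ref{problema_ab} for $\bar{\bu}$, and uniqueness of its solution (Theorem \ref{Th_exist_unic_bid_cero}) identifies the limit and justifies convergence of the whole sequence.

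Finally, strong convergence $\bu(\varepsilon)\to \bu$ in $\WHOt$ follows by the standard argument: use the Korn inequality \eqref{Korn} applied to $\bu(\varepsilon)-\bu$, and show that $\tfrac{1}{\varepsilon}e_{i\|j}(\varepsilon;\bu(\varepsilon))\to e^1_{i\|j}$ strongly in $L^2(0,T;L^2(\Omega))$ by testing the variational equation with $\bu(\varepsilon)$ itself, passing to the limit to get convergence of the bilinear energy, and combining with weak lower semicontinuity and Theorem \ref{Th_522}(e) to upgrade weak to strong convergence.
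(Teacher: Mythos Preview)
Your plan follows the same overall scheme as the paper (Ciarlet--Lods adapted to Kelvin--Voigt), and steps one through four are essentially what the paper does. The final step, however, has two genuine gaps.

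First, applying Korn's inequality \eqref{Korn} to $\bu(\varepsilon)-\bu$ does not work. Since $\bu$ is independent of $x_3$ but $\partial_\alpha u_3\neq 0$ in general, one has $\tfrac{1}{\varepsilon}e_{\alpha\|3}(\varepsilon;\bu)=\tfrac{1}{2\varepsilon}\partial_\alpha u_3+O(1)$, which blows up, and $\tfrac{1}{\varepsilon}e_{3\|3}(\varepsilon;\bu)=0\neq e^1_{3\|3}$ (the latter being the nonzero convolution expression you derived). Hence $\tfrac{1}{\varepsilon}e_{i\|j}(\varepsilon;\bu(\varepsilon)-\bu)\not\to 0$. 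The paper instead first proves strong convergence of the scaled strains $\tfrac{1}{\varepsilon}e_{i\|j}(\varepsilon)$ in $H^1(0,T;L^2(\Omega))$, then deduces $\rho_{\alpha\beta}(\bu(\varepsilon))\to -\partial_3 e^1_{\alpha\beta}$ in $H^1(0,T;H^{-1}(\Omega))$ via Theorem~\ref{Th_521}, and only then invokes Theorem~\ref{Th_522}(e).

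Second, your sketch ``convergence of the bilinear energy plus weak lower semicontinuity'' underestimates the viscoelastic difficulty. Testing with $\bu(\varepsilon)$ yields a viscous cross-term $\int B\dot e(\varepsilon)e(\varepsilon)$ with no sign, so you cannot isolate the elastic energy. More importantly, from $-\partial_3 e^1_{\alpha\beta}=\rho_{\alpha\beta}(\bu)$ you only get $e^1_{\alpha\beta}=\Upsilon_{\alpha\beta}-x_3\rho_{\alpha\beta}(\bar\bu)$ with an unknown $x_3$-independent remainder $\Upsilon_{\alpha\beta}$; the ODE you solve relates $e^1_{3\|3}$ to $e^1_{\alpha\beta}$, not directly to $\rho_{\alpha\beta}(\bu)$. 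The paper forms the defect $\Psi(\varepsilon)=\int A(\tfrac{1}{\varepsilon}e-e^1)(\tfrac{1}{\varepsilon}e-e^1)+\int B(\tfrac{1}{\varepsilon}\dot e-\dot e^1)(\tfrac{1}{\varepsilon}e-e^1)$, integrates in time, and must show $\int_0^T\Psi\le 0$. After using the limit equation, $\Psi$ reduces to an expression in $\Upsilon_{\alpha\beta}$ alone; one then \emph{defines} an auxiliary $\Upsilon_{33}$ by the same convolution formula \eqref{edtres1} so that $\Psi=-\int A^{ijkl}(0)\Upsilon_{kl}\Upsilon_{ij}-\int B^{ijkl}(0)\dot\Upsilon_{kl}\Upsilon_{ij}$, whose time integral is $\le 0$ by ellipticity and the zero initial data. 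A parallel argument with $\dot\bu(\varepsilon)$ as test function handles the $H^1$-in-time convergence. Only afterwards does one deduce $\Upsilon_{\alpha\beta}=0$ (via the two-dimensional uniqueness theorem), which is needed to conclude that the limits $e^1_{i\|j}$ are uniquely determined. This is precisely the place where the viscoelastic proof departs from the elastic template, and your outline does not account for it.
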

\begin{proof} 
We follow the same structure of the proof in
 Theorem 6.2-1, \cite{Ciarlet4b}. The proof is divided into several parts, numbered from $(i)$ to $(vi)$. Moreover, we will use the notation $f^i\equiv f^{i,2}$ and $h^i\equiv h^{i,3}$ , for  notational brevity.
  \begin{enumerate}[label={{(\roman*)}}, leftmargin=0em ,itemindent=3em]
   \item {\em \textit{A priori} boundedness and extraction of weak convergent sequences.
     
     The norms $|\frac{1}{\var}\eij(\varepsilon)|_{H^{1}(0, T, L^2(\Omega))}$ and $ ||\bu(\varepsilon)||\nWHOt$ are bounded independently of  $\var, 0<\varepsilon\leq\varepsilon_1, $ where $ \varepsilon_1>0$ is given in Theorem \ref{Th_desigKorn}.
     Consequently, there exists a subsequence, also denoted 
   $ (\bu(\varepsilon))_{\varepsilon>0}$, and functions  $\eij^1\in H^{1}(0, T, L^2(\Omega)),\bu\in \WHOt $, satisfying $\bu=\bcero$ on $\Gamma_0$ such that
     \begin{align}
     &\bu(\varepsilon)\deb\bu \en \WHOt \\ &\textrm{and hence} \ \bu(\var)\rightarrow \bu \en\WLOt,
      \\\label{eij_conv}
         & \frac{1}{\var}\eij(\varepsilon)\deb\eij^1\en \WLO,
     \end{align}}
     
    For the proof of this step we take $\bv=\bu(\var)\dep$ in (\ref{ecuacion_orden_fuerzas}) and find
     \begin{align}\nonumber
               &\int_{\Omega}A^{ijkl}(\varepsilon)e_{k||l}(\varepsilon)e_{i||j}(\varepsilon)\sqrt{g(\varepsilon)} dx
             \displaystyle + \int_{\Omega} B^{ijkl}(\varepsilon)\dot{e}_{k||l}(\varepsilon)e_{i||j}(\varepsilon) \sqrt{g(\varepsilon)}  dx
              \\ \nonumber
              &\quad = \var^2\int_{\Omega} f^{i} u_i(\var) \sqrt{g(\varepsilon)} dx +\var^2\int_{\Gamma_1} h^i u_i(\var)\sqrt{g(\var)} d\Gamma, \aes,  
             \end{align} 
  which is equivalent to,           
        \begin{align}\nonumber
                    &\int_{\Omega}A^{ijkl}(\varepsilon)e_{k||l}(\varepsilon)e_{i||j}(\varepsilon)\sqrt{g(\varepsilon)} dx
                  \displaystyle + \frac1{2} \frac{\d}{\d t}\int_{\Omega} B^{ijkl}(\varepsilon){e}_{k||l}(\varepsilon)e_{i||j}(\varepsilon) \sqrt{g(\varepsilon)}  dx
                   \\ \nonumber
                   &\quad = \var^2\int_{\Omega} f^{i} u_i(\var) \sqrt{g(\varepsilon)} dx +\var^2\int_{\Gamma_1} h^i u_i(\var)\sqrt{g(\var)} d\Gamma, \aes.  
                  \end{align}       
  Now, integrating over $[0,T]$ and using (\ref{elipticidadB_eps}) and (\ref{condicion_inicial_def}), we find that
    \begin{align} \nonumber
                &\int_0^T \left(\int_{\Omega} A^{ijkl}(\varepsilon){e}_{k||l}(\varepsilon)e_{i||j}(\varepsilon) \sqrt{g(\varepsilon)}  dx\right) dt
               \\ \label{desig_i}
               & \quad \leq  \var^2\int_0^T \left(\int_{\Omega} \fb^{i} u_i(\var) \sqrt{g(\varepsilon)} dx +\int_{\Gamma_1} h^i u_i(\var)\sqrt{g(\var)} d\Gamma \right)dt,  
               \end{align} 
   Using the Cauchy-Schwartz inequality and (\ref{g_acotado}), there exists a constant $\hat{c}>0$ depending on the norms $\left|f^i\right|_{0, \Omega}$, $\left|h^i\right|_{L^{2}(\Gamma^1)}$ and the norm of the trace operator from $H^1(\Omega)$ to $L^2(\Gamma^1)$, such that
     \begin{align}\label{desig1}
    \left| \int_{\Omega} f^i u_i(\var) \sqrt{g(\varepsilon)} dx +  \int_{\Gamma^1} h^i u_i(\var) \sqrt{g(\varepsilon)} d\Gamma \right|\leq \hat{c}\left\|\bu(\var)\right\|_{1,\Omega},
     \end{align}
   for all $\var$, $0<\var \leq \var_0$ and for all $t\in[0,T].$  
   On the other hand, by using (\ref{elipticidadA_eps}), (\ref{g_acotado}) and  (\ref{Korn}) we obtain   
  \begin{align}   \label{desig2}
      &\int_{\Omega} A^{ijkl}(\varepsilon){e}_{k||l}(\varepsilon)e_{i||j}(\varepsilon) \sqrt{g(\varepsilon)}  dx
  \geq g_0^{1/2} C_e |\ekl(\varepsilon)|_{0,\Omega}^2
 \geq g_0^{1/2}C_e C^{-2} \var^2\left\|\bu(\var)\right\|_{1,\Omega}^2.
      \end{align} 
  Now, (\ref{desig_i})--(\ref{desig2}) together  and the Cauchy-Schwartz inequality imply that 
   \begin{align} \nonumber
    &g_0^{1/2}C_e C^{-2} \var^2 \left\|\bu(\var)\right\|_{L^2\left(0,T;[H^1(\Omega)]^3\right)}^2
      \leq \hat{c} \var^2\int_0^T \left\|\bu(\var)\right\|_{1,\Omega} dt
 \leq\hat{c}\sqrt{T}  \var^2\left\|\bu(\var)\right\|_{L^2\left(0,T;[H^1(\Omega)]^3\right)}.
   \end{align}    
   Hence, we conclude that there exists a constant $\tilde{k}_1>0$ independent of $\varepsilon$ such that $\left\|\bu(\var)\right\|_{L^2\left(0,T;[H^1(\Omega)]^3)\right)}\leq\tilde{k}_1.$ As a consequence, by the inequalities  (\ref{desig_i})--(\ref{desig2}) we can check that the bounds for the norms $\left|\frac{1}{\var}\eij(\var)\right|_{L^2\left(0,T;H^1(\Omega)\right)}$ also hold.

 Next,  we take $\bv=\dot{\bu}(\var)\dep$ in (\ref{ecuacion_orden_fuerzas}) and find that
        \begin{align}\nonumber
                  &\int_{\Omega}A^{ijkl}(\varepsilon)e_{k||l}(\varepsilon)\deij(\varepsilon)\sqrt{g(\varepsilon)} dx
                \displaystyle + \int_{\Omega} B^{ijkl}(\varepsilon)\dekl(\varepsilon)\deij(\varepsilon) \sqrt{g(\varepsilon)}  dx
                 \\ \nonumber
                 &\quad = \var^2\int_{\Omega} f^{i} \dot{u}_i(\var) \sqrt{g(\varepsilon)} dx +\var^2\int_{\Gamma_1} h^i\dot{u}_i(\var)\sqrt{g(\var)} d\Gamma, \aes,  
                \end{align} 
     which is equivalent to,           
           \begin{align}\nonumber
                       &\frac1{2} \frac{\d}{\d t}\int_{\Omega}A^{ijkl}(\varepsilon)e_{k||l}(\varepsilon)e_{i||j}(\varepsilon)\sqrt{g(\varepsilon)} dx
                     \displaystyle +\int_{\Omega} B^{ijkl}(\varepsilon)\dekl(\varepsilon)\deij(\varepsilon) \sqrt{g(\varepsilon)}  dx
                      \\ \nonumber
                      &\quad = \var^2\int_{\Omega} f^{i} \dot{u}_i(\var) \sqrt{g(\varepsilon)} dx +\var^2\int_{\Gamma_1} h^i \dot{u}_i(\var)\sqrt{g(\var)} d\Gamma, \aes.  
                     \end{align}       
     Integrating over $[0,T]$ and using (\ref{elipticidadB_eps}) and (\ref{condicion_inicial_def}), we find that
       \begin{align} \nonumber
                   &\int_0^T \left(\int_{\Omega} B^{ijkl}(\varepsilon)\dekl(\varepsilon)\deij(\varepsilon) \sqrt{g(\varepsilon)}  dx\right) dt
                  \\ \label{desig_id}
                  & \quad \leq  \var^2\int_0^T \left(\int_{\Omega} \fb^{i} \dot{u}_i(\var) \sqrt{g(\varepsilon)} dx +\int_{\Gamma_1} h^i \dot{u}_i(\var)\sqrt{g(\var)} d\Gamma \right)dt,  
                  \end{align} 
      Using the Cauchy-Schwartz inequality and (\ref{g_acotado}), again there exists a constant $c>0$ depending on the norms$\left|f^i\right|_{0, \Omega}$, $\left|h^i\right|_{L^{2}(\Gamma^1)}$ and the norm of the trace operator from $H^1(\Omega)$ to $L^2(\Gamma^1)$, such that
        \begin{align}\label{desig1d}
       \left| \int_{\Omega} f^i \dot{u}_i(\var) \sqrt{g(\varepsilon)} dx +  \int_{\Gamma^1} h^i \dot{u}_i(\var) \sqrt{g(\varepsilon)} d\Gamma \right|\leq c\left\|\dot{\bu}(\var)\right\|_{1,\Omega},
        \end{align}
      for all $\var$, $0<\var \leq \var_0$ and for all $t\in[0,T].$  
      On the other hand, by using (\ref{elipticidadB_eps}), (\ref{g_acotado}) and  (\ref{Korn}) we obtain   
     \begin{align}  \label{desig2d}
         &\int_{\Omega} B^{ijkl}(\varepsilon)\dekl(\varepsilon)\deij(\varepsilon) \sqrt{g(\varepsilon)}  dx
     \geq g_0^{1/2} C_v |\dekl(\varepsilon)|_{0,\Omega}^2
\geq g_0^{1/2}C_v C^{-2} \var^2\left\|\dot{\bu}(\var)\right\|_{1,\Omega}^2.
         \end{align} 
     Therefore, (\ref{desig_id})--(\ref{desig2d}) together  and the Cauchy-Schwartz inequality imply that 
      \begin{align} \nonumber
       &g_0^{1/2}C_v C^{-2} \var^2 \left\|\dot{\bu}(\var)\right\|_{L^2\left(0,T;[H^1(\Omega)]^3\right)}^2
         \leq c \var^2\int_0^T \left\|\dot{\bu}(\var)\right\|_{1,\Omega} dt
    \leq c\sqrt{T} \var^2\left\|\dot{\bu}(\var)\right\|_{L^2\left(0,T;[H^1(\Omega)]^3\right)}.
      \end{align}    
      Hence, we conclude that there exists a constant $\tilde{k}_2>0$ independent of $\varepsilon$ such that $\left\|\dot{\bu}(\var)\right\|_{L^2\left(0,T;[H^1(\Omega)]^3)\right)}\leq\tilde{k}_2$. As a consequence, by the inequalities  (\ref{desig_id})--(\ref{desig2d}) we can check that the bounds for the norms $\left|\frac{1}{\var}\deij(\var)\right|_{L^2\left(0,T;H^1(\Omega)\right)}$ also hold.  Therefore, the convergences announced in this step are satisfied.
      
 \item   {\em The limit $\bu$ found in the previous step  is independent fo the transversal variable $x_3$ and its average $\frac{1}{2}\int_{-1}^1\bu dx_3=\bar{\bu}\in H^1(\omega)\times H^1(\omega)\times H^2(\omega)$  and $\bar{u}_i=\d_{\nu}\bar{u}_3=0$ at $\gamma_0$ and $\gab(\bar{\bu})=0$ in $\omega$ for all $t\in[0,T].$
 }
 
 This step is consequence of the step $(i)$ and the Theorem \ref{Th_522}. Hence, the proof of the step $(b)$ of this theorem is met.
 
 \item {\em We obtain the relation between the limits $\eij^1$ found in $(i)$ and the limits   $\bu:=(u_i)$. }  
 Firstly, by the Theorem \ref{Th_522} we have that $-\d_3\eab^1=\rab(\bu)$ in $\WLO$.  
  As a consequence of the definition of the scaled strains in (\ref{eab})--(\ref{edtres}), we find
 \begin{align}\nonumber
 &\varepsilon\eab(\varepsilon;\bv)\rightarrow 0 \ \textrm{in} \ L^2(\Omega),
 \\\nonumber
 &\varepsilon\eatres(\varepsilon;\bv)\rightarrow \frac1{2}\d_3v_\alpha \  \textrm{in} \ L^2(\Omega),
 \\\nonumber
&\varepsilon\edtres(\varepsilon;\bv)=\d_3v_3 \ \textrm{for all} \ \varepsilon>0.
 \end{align}
   Using the variational formulation (\ref{ecuacion_orden_fuerzas})  and taking into account (\ref{tensor_terminos_nulos}), (\ref{tensorA_escalado}) and (\ref{tensorB_escalado}), we have
  \begin{align}\nonumber
           &\int_{\Omega}A^{ijkl}(\varepsilon) e_{k||l}(\varepsilon)e_{i||j}(\varepsilon,\bv)\sqrt{g(\varepsilon)} dx + \int_{\Omega} B^{ijkl}(\varepsilon)\dot{e}_{k||l}(\varepsilon)e_{i||j}(\varepsilon,\bv) \sqrt{g(\varepsilon)}  dx
          \\ \nonumber
          &\quad=\frac{1}{\var}\int_{\Omega}\left(A^{\alpha\beta\sigma\tau}(\varepsilon)\est(\varepsilon) + A^{\alpha\beta33}(\varepsilon)\edtres(\varepsilon)\right)(\varepsilon\eab(\varepsilon;\bv))\sqrt{g(\varepsilon)}dx 
           \\ \nonumber
          &\qquad +\frac{1}{\var}\int_{\Omega}4A^{\alpha3\sigma3}(\varepsilon)\estres(\varepsilon)\left(\varepsilon\eatres(\varepsilon;\bv)\right)\sqrt{g(\varepsilon)}dx
          \\ \nonumber
          &\qquad +\frac{1}{\var}\int_{\Omega}\left(A^{33\sigma\tau}(\varepsilon)\est(\varepsilon) + A^{3333}(\varepsilon)\edtres(\varepsilon)\right)(\varepsilon\edtres(\varepsilon;\bv))\sqrt{g(\varepsilon)}dx  
          \\\nonumber
          &\qquad
           +\frac{1}{\var}\int_{\Omega}\left(B^{\alpha\beta\sigma\tau}(\varepsilon)\dest(\varepsilon) + B^{\alpha\beta33}(\varepsilon)\dedtres(\varepsilon)\right)(\varepsilon\eab(\varepsilon;\bv))\sqrt{g(\varepsilon)}dx
        \\ \nonumber
          &\qquad
           +\frac{1}{\var}\int_{\Omega}4B^{\alpha3\sigma3}(\varepsilon)\destres(\varepsilon)\left(\varepsilon\eatres(\varepsilon;\bv)\right)\sqrt{g(\varepsilon)}dx
    \\\qquad \nonumber
          &\qquad+ \frac{1}{\var}\int_{\Omega}\left(B^{33\sigma\tau}(\varepsilon)\dest(\varepsilon) + B^{3333}(\varepsilon)\dedtres(\varepsilon)\right)(\varepsilon\edtres(\varepsilon;\bv))\sqrt{g(\varepsilon)}dx
          \\\nonumber
          &\quad= \varepsilon^2\int_{\Omega} f^{i} v_i \sqrt{g(\varepsilon)} dx + \varepsilon^2\int_{\Gamma_1}  h^{i} v_i \sqrt{g(\varepsilon)} d\Gamma\quad \forall \bv\in V(\Omega), \aes.
         \end{align} 
We pass to the limit as $\var\to0$ and by taking into account the asymptotic behavior of the contravariant components of the fourth order tensors  $A^{ijkl}(\varepsilon)$, $B^{ijkl}(\varepsilon)$ (see Theorem \ref{Th_comportamiento asintotico}), $g(\varepsilon)$ (see Theorem \ref{Th_simbolos2D_3D}), the convergences above and the weak convergences of the step $(i)$ we obtain the following integral equation
 \begin{align}\nonumber 
 &\int_{\Omega}\left(2\mu a^{\alpha\sigma}\eatres^1\d_3v_\sigma + (\lambda + 2\mu)\edtres^1\d_3v_3\right)\sqrt{a}dx
 +\int_{\Omega}\lambda a^{\alpha\beta}\eab^1\d_3v_3\sqrt{a}dx 
 \\ \nonumber
 & \quad
  +\int_{\Omega}\left(\rho a^{\alpha\sigma}\deatres^1\d_3v_\sigma + (\theta + \rho){\dedtres^1}\d_3v_3\right)\sqrt{a} dx
  \\\label{ecuacion_integral1}
  & \quad
 +\int_{\Omega}\theta a^{\alpha\beta}{\deab^1}\d_3v_3\sqrt{a}dx=0,
 \end{align}
 in $\Omega$, $\aes$. On one hand, if we take $\bv\in V(\Omega)$ such that $v_2=v_3=0$ and using the Theorem \ref{th_int_nula}, we have
 \begin{align} \label{ec_dif1}
 2\mu a^{\alpha 1} \eatres^1 + \rho a^{\alpha 1} \deatres^1 =0, \aes.
 \end{align}
 On the other hand, if we take $\bv\in V(\Omega)$ such that $v_1=v_3=0$ and using the Theorem \ref{th_int_nula}, we have
 \begin{align} \label{ec_dif2}
 2\mu a^{\alpha 2} \eatres^1 + \rho a^{\alpha 2} \deatres^1 =0, \aes.
 \end{align}
 
 Multiplying (\ref{ec_dif1}) by $a^{22}$ and (\ref{ec_dif2}) by $-a^{21}$ and adding both expressions we have
 \begin{align*}
 2\mu \left(a^{22}a^{11} - a^{21}a^{12} \right) e_{1||3}^1 + \rho \left( a^{22}a^{11} - a^{21}a^{12}\right)\dot{e}_{1||3}^1 = 2\mu ae_{1||3}^1 + \rho a\dot{e}_{1||3}^1 =  0,
 \end{align*}
 $\aes$, by (\ref{definicion_a}). Now, by the initial condition in (\ref{condicion_inicial_def}) we conclude
 \begin{align*}
 e_{1||3}^1(t) =0 \en \Omega ,\ \textrm{for all} \ t\in(0,T).
 \end{align*}
 Multiplying (\ref{ec_dif1}) by $a^{12}$ and (\ref{ec_dif2}) by $-a^{11}$ and adding both expressions we have
 \begin{align*}
 2\mu ae_{2||3}^1  + \rho a\dot{e}_{2||3}^1 =  0, \aes.
 \end{align*}
  Now, by the initial condition in (\ref{condicion_inicial_def}) we conclude
 \begin{align*}
 e_{2||3}^1(t) =0 \en \Omega ,\ \textrm{for all} \ t\in(0,T).
 \end{align*}
 Taking  $\bv\in V(\Omega)$ such that $v_\alpha=0$ in (\ref{ecuacion_integral1}) , we obtain
 \begin{align} \nonumber
 &\intO  \left(\lambda a^{\alpha\beta}\eab^1+ (\lambda + 2\mu)\edtres^1 \right)\d_3v_3\sqrt{a}dx
 + \intO  \left(\theta a^{\alpha\beta}\deab^1+ (\theta + \rho)\dedtres^1 \right)\d_3v_3\sqrt{a}dx=0,
 \end{align}
 for all $v_3\in H^1(\Omega)$ with $v_3=0 \en \Gamma_0$ and $\aes$. By Theorem \ref{th_int_nula}, we obtain the following differential equation
 \begin{align} \label{ecuacion_casuistica}
 \lambda a^{\alpha\beta} \eab^1 + (\lambda+2\mu) \edtres^1 +\theta a^{\alpha \beta} \deab^1 + (\theta + \rho) \dedtres^1=0.
 \end{align}
 \begin{remark} \label{nota_desvio}
 Note that removing time dependency and viscosity, that is taking $\theta=\rho=0$, the equation leads to the one studied  \cite{Ciarlet4b} (page 312, Chapter 6), that is, the elastic case. 
 \end{remark}
 In order to solve the equation (\ref{ecuacion_casuistica}) in the more general case, we assume that the viscosity coefficient $\theta$ is strictly positive. Thus, we can prove that this equation is equivalent to
 \begin{align} \nonumber 
 \theta e^{-\frac{\lambda}{\theta}t} \frac{\d}{\d t}\left(a^{\alpha\beta}\eab^1 e^{\frac{\lambda}{\theta}t}\right)=-\left(\theta + \rho \right)e^{-\frac{\lambda + 2\mu}{\theta + \rho}t} \frac{\d}{\d t}\left(\edtres^1 e^{\frac{\lambda + 2\mu}{\theta + \rho}t}\right).
 \end{align}
 Integrating with respect to the time variable and using (\ref{condicion_inicial_def}) we find that,
 \begin{align*}
 \edtres^1 e^{\frac{\lambda + 2\mu}{\theta + \rho}t}= - \frac{\theta}{\theta + \rho} \int_0^t e^{\left(\frac{\lambda +2 \mu}{\theta + \rho} - \frac{\lambda}{\theta}\right) s}  \frac{\d}{\d s} \left(a^{\alpha\beta}\eab^1(s) e^{\frac{\lambda}{\theta}s}\right) ds,
 \end{align*}
 now integrating by parts and simplifying we conclude that,
 \begin{align}\label{edtres1}
 \edtres^1(t)= - \frac{\theta}{\theta + \rho} \left( a^{\alpha \beta }\eab^1(t) + \Lambda\int_0^te^{-k(t-s)}a^{\alpha\beta}\eab^1(s) ds \right),
 \end{align}
 in $\Omega$ , $\forallt$, and where $\Lambda$ and $k$ are defined  in (\ref{Lambda}) and (\ref{k}), respectively. Moreover, from (\ref{ecuacion_casuistica}) we obtain that,
 \begin{align}\label{dedtres1}
 \dedtres^1(t)= - \frac{\lambda}{\theta + \rho} a^{\alpha \beta} \eab^1(t)- \frac{\lambda + 2\mu}{\theta + \rho} \edtres^1(t) - \frac{\theta}{\theta + \rho}  a^{\alpha\beta}\deab^1(t).
 \end{align}
 in $\Omega$ , $\ae.$

 \item {\em The function $\bar{\bu}:=\left(\bar{u}_i\right)$ verifies the Problem \ref{problema_ab} which has uniqueness of solution by the Theorem \ref{Th_exist_unic_bid_cero}. As a consequence, the convergences $\bu(\var)\deb\bu$ in $\WHOt$ and $\bu(\var)\to\bu$ in $\WLOt$ are verified by the whole family $\left(\bu(\var)\right)_{\var>0}$ (if the function $\bar{\bu}$ is unique, so is the function $\bu$ as it is independent of $x_3$ by the step $(ii)$).}
 
 The function $\bar{\bu}$ belongs to the space $V_F(\omega),$ for all $t\in[0,T]$ by the step $(ii).$ Given $\beeta\in V_F(\omega),$ let $\bv(\var)=\left(v_i(\var)\right)$ be defined almost everywhere in $\Omega$ for all $\var>0$ as follows (as in the elastic case we follow the idea taken from \cite{MiaraSPalencia1996}):
 \begin{align}\label{defv_MP1}
 v_\alpha(\var)&:= \eta_\alpha - \var x_3\theta_\alpha, \textrm{ with } \theta_\alpha:=\d_\alpha\eta_3 + 2b_\alpha^\sigma \eta_\sigma.
 \\\label{defv_MP2}
 v_3(\var)&:= \eta_3.
 \end{align}
 Then, we have that $\bv(\var)\in V(\Omega)$ and $\edtres(\var;\bv(\var))=0$  for all $\var>0$. Let us prove that for a  function $\beeta\in V_F(\omega)$ identified wherever is needed with a function in the space $H^1(\Omega)\times H^1(\Omega)\times H^2(\Omega)$ we obtain the following:
 \begin{align}\label{ab_1}
 \bv(\var)&\to \beeta \in [H^1(\Omega)]^3,
 \\\label{ab_2}
 \frac{1}{\var}\eab (\var;\bv(\var))&\to \left(-x_3 \rab(\beeta)\right) \in L^2(\Omega), \textrm{ when } \var\to 0,
 \\\label{ab_3}
 \left(\frac{1}{\var}\eatres\left(\var;\bv(\var)\right)\right)_{\var>0} &\textrm{ converges in } L^2(\Omega).
  \end{align}
 The first relation holds by the definition od the function $\bv(\var)$ in (\ref{defv_MP1})--(\ref{defv_MP2}). Using the fact that $\gab(\beeta)=0$ (since $\beeta\in V_F(\omega)$) we obtain after some calculations that
 \begin{align*}
 \eab^1(\var;\bv(\var)) :=& \frac{1}{\var} \gab(\bv(\var)) + x_3b_{\beta|\alpha}^\sigma v_\sigma(\var) + x_3b_\alpha^\sigma b_{\sigma\beta}v_3(\var)
 \\
 =&- x_3 \left(\frac{1}{2}\left(\d_\beta\theta_\alpha + \d_\alpha\theta_\beta\right) - \Gamma_{\alpha\beta}^\sigma\theta_\sigma - b_{\beta|\alpha}^\sigma\eta_\sigma - b_\alpha^\sigma b_{\sigma\beta}\eta_3\right) - \var x_3^2 b_{\beta|\alpha}^\sigma\theta_\sigma
 \\
 =& - x_3 \rab(\beeta)  - \var x_3^2 b_{\beta|\alpha}^\sigma\theta_\sigma.
 \end{align*}   
Therefore, by the applying Theorem \ref{Th_521} we have that
\begin{align*}
\left|\frac{1}{\var}\eab(\var;\bv(\var)) - \eab^1(\var;\bv(\var))\right|_{0,\Omega}\leq c_1\var \sum_{\alpha} \left|v_\alpha(\var)\right|_{0,\Omega},
\end{align*}     
  and thus,
  \begin{align*}
  \frac{1}{\var}\eab(\var;\bv(\var)) \to - x_3 \rab(\beeta)  \in L^2(\Omega).
  \end{align*}   
 Also, we can obtain after some calculations that 
 \begin{align*}
 \frac{1}{\var}\eatres \left(\var;\bv(\var)\right)= -\frac{1}{\var}\left(\Gamma^\sigma_{\alpha3}(\var) + b_\alpha^\sigma\right)\eta_\sigma + x_3\Gamma^\sigma_{\alpha 3}(\var) \theta_\sigma,
 \end{align*}
 which together with the asymptotic behaviour of the functions $\Gamma^\sigma_{\alpha3}(\var)$  (see Theorem \ref{Th_simbolos2D_3D}), imply that $\left(\frac{1}{\var}\eatres\left(\var;\bv(\var)\right)\right)_{\var>0}$ converges in $L^2(\Omega)$. 
 
 Now, let  $\beeta\in V_F(\omega)$ fixed, and take $\bv=\bv(\var)$ in the equation (\ref{ecuacion_orden_fuerzas}), with $\bv(\var)=(v_i(\var))$ defined as in (\ref{defv_MP1})--(\ref{defv_MP2}) to obtain that 
  \begin{align}\nonumber
            &\lim_{\var\to0} \left(\frac{1}{\var^2}\int_{\Omega}A^{ijkl}(\varepsilon) e_{k||l}(\varepsilon)e_{i||j}(\varepsilon;\bv)\sqrt{g(\varepsilon)} dx + \frac{1}{\var^2}\int_{\Omega} B^{ijkl}(\varepsilon)\dot{e}_{k||l}(\varepsilon)e_{i||j}(\varepsilon;\bv) \sqrt{g(\varepsilon)}  dx\right)
           \\ \nonumber
           &\quad=\lim_{\var\to0}\Bigg(\int_{\Omega}\left(A^{\alpha\beta\sigma\tau}(\varepsilon)\left(\frac{1}{\var}\est(\varepsilon)\right) + A^{\alpha\beta33}(\varepsilon)\left(\frac{1}{\var}\edtres(\varepsilon)\right)\right)\left(\frac{1}{\var}\eab(\varepsilon;\bv)\right)\sqrt{g(\varepsilon)}dx 
            \\ \nonumber
           &\qquad +\int_{\Omega}4A^{\alpha3\sigma3}(\varepsilon)\left(\frac{1}{\var}\estres(\varepsilon)\right)\left(\frac{1}{\var}\eatres(\varepsilon;\bv)\right)\sqrt{g(\varepsilon)}dx
           \\ \nonumber
           &\qquad +\int_{\Omega}\left(A^{33\sigma\tau}(\varepsilon)\left(\frac{1}{\var}\est(\varepsilon)\right) + A^{3333}(\varepsilon)\left(\frac{1}{\var}\edtres(\varepsilon)\right)\right)\left(\frac{1}{\var}\edtres(\varepsilon;\bv)\right)\sqrt{g(\varepsilon)}dx  
           \\\nonumber
           &\qquad
            +\int_{\Omega}\left(B^{\alpha\beta\sigma\tau}(\varepsilon)\left(\frac{1}{\var}\dest(\varepsilon)\right) + B^{\alpha\beta33}(\varepsilon)\left(\frac{1}{\var}\dedtres(\varepsilon)\right)\right)\left(\frac{1}{\var}\eab(\varepsilon;\bv)\right)\sqrt{g(\varepsilon)}dx
         \\ \nonumber
           &\qquad
            +\int_{\Omega}4B^{\alpha3\sigma3}(\varepsilon)\left(\frac{1}{\var}\destres(\varepsilon)\right)\left(\frac{1}{\var}\eatres(\varepsilon;\bv)\right)\sqrt{g(\varepsilon)}dx
     \\\qquad \nonumber
           &\qquad+ \int_{\Omega}\left(B^{33\sigma\tau}(\varepsilon)\left(\frac{1}{\var}\dest(\varepsilon)\right) + B^{3333}(\varepsilon)\left(\frac{1}{\var}\dedtres(\varepsilon)\right)\right)\left(\frac{1}{\var}\edtres(\varepsilon;\bv)\right)\sqrt{g(\varepsilon)}dx\Bigg)
           \\\nonumber
           &\quad=\lim_{\var\to0}\left(\int_{\Omega} f^{i} v_i \sqrt{g(\varepsilon)} dx + \int_{\Gamma_1}  h^{i} v_i \sqrt{g(\varepsilon)} d\Gamma\right).
          \end{align}  
 Let $\var\to0$. By the asymptotic behaviour of the functions $\bv(\var)$ and $ \frac{1}{\var}\eab (\var;\bv(\var))$ in (\ref{ab_1})--(\ref{ab_3}), $A^{ijkl}(\var)$, $B^{ijkl}(\var)$, $g(\var)$ (see Theorems \ref{Th_comportamiento asintotico}) and \ref{Th_simbolos2D_3D}), the weak convergences from the step $(i)$ and that $\eatres^1=0$ (see step $(iii)$) we find that
 \begin{align*}
&\intO A^{\alpha\beta\sigma \tau}(0)\est^1\left(-x_3 \rab(\beeta)\right) \sqrt{a}dx  
 + \intO A^{\alpha\beta33}(0)\edtres^1\left(-x_3 \rab(\beeta)\right) \sqrt{a}dx
 \\
 &\qquad+\intO B^{\alpha\beta\sigma \tau}(0)\dest^1\left(-x_3 \rab(\beeta)\right) \sqrt{a}dx  
  + \intO B^{\alpha\beta33}(0)\dedtres^1\left(-x_3 \rab(\beeta)\right) \sqrt{a}dx
  \\\nonumber
        &\quad=\left(\int_{\Omega} f^{i} \eta_i \sqrt{a} dx + \int_{\Gamma_1}  h^{i} \eta_i \sqrt{a} d\Gamma\right).
 \end{align*}
  Then, using the relations (\ref{edtres1})--(\ref{dedtres1}) from the step $(iii)$, we obtain that,
 \begin{align*}
 &\intO \left( \left(\lambda - \frac{\theta}{\theta + \rho} \left( \theta \Lambda + \lambda \right)\right)a^{\alpha\beta}a^{\sigma \tau} + \mu\ten  \right) \est^1\left(-x_3 \rab(\beeta)\right)\sqrt{a}dx
 \\
 &\qquad + \intO \left( \frac{\theta \rho }{\theta + \rho} a^{\alpha \beta} a^{\sigma \tau}  + \frac{\rho}{2}\ten  \right) \dest^1\left(-x_3 \rab(\beeta)\right)\sqrt{a}dx
 \\
 & \qquad - \intO \frac{\left( \theta \Lambda\right)^2}{\theta + \rho} \int_0^t e^{-k(t-s)}a^{\sigma \tau} \est^1(s) ds a^{\alpha \beta}\left(-x_3 \rab(\beeta)\right) \sqrt{a} dx
 \\
&\quad =-\frac{1}{2}\intO x_3 \a \est^1\rab(\beeta)\sqrt{a}dx - \frac{1}{2}\intO x_3 \b \dest^1 \rab(\beeta)\sqrt{a}dx 
   \\
   & \qquad+ \frac{1}{2}\int_0^te^{-k(t-s)}\intO x_3 \c \est^1(s)\rab(\beeta)\sqrt{a}dx ds 
   \\ & \quad 
 =\int_{\omega}p^{i}\eta_i\sqrt{a}dy,
 \end{align*}
 with $p^{i}$ defined in (\ref{def_p}) and
 where  $\a$ , $\b$ and $\c$ denote the contravariant components of the fourth order two-dimensional   tensors, defined in (\ref{tensor_a_bidimensional})--(\ref{tensor_c_bidimensional}). Now,recall that by the step $(iii)$ we  had that $-\d_3\eab^1=\rab(\bu)$ in $\WLO$ hence,
 \begin{align}\label{eab1_ups}
\eab^1=\Upsilon_{\alpha\beta}- x_3\rab(\bar{\bu}),
 \end{align}
  with $\Upsilon_{\alpha\beta}\in\WLo$ (independent of $x_3$). Therefore, from the previous equation we find that
 \begin{align} \nonumber
 &\frac{1}{2}\intO x_3^2 \a \rab(\bar{\bu})\rab(\beeta)\sqrt{a}dx + \frac{1}{2}\intO x_3^2 \b \rab(\dot{\bar{\bu}}) \rab(\beeta)\sqrt{a}dx 
    \\ \nonumber
    & \qquad- \frac{1}{2}\int_0^te^{-k(t-s)}\intO x_3^2 \c \rab(\bar{\bu}(s))\rab(\beeta)\sqrt{a}dx ds 
    \\ \label{ubarra}
     & \quad 
  =\int_{\omega}p^{i}\eta_i\sqrt{a}dy,
 \end{align}
  and since $\int_{-1}^1 x_3^2 dx_3=\frac{2}{3}$, we conclude that $\bar{\bu}$ verifies the equation of the Problem \ref{problema_ab}.
  
 \item {\em The strong convergences $\frac{1}{\var}\eij(\var)\to \eij^1$ in $\WLO$, hold. Moreover, since the limits $\eij^1$ are unique, then these convergences hold for the whole family $\left(\frac{1}{\var}\eij(\var)\right)_{\var>0}$}.
 
Indeed, we define
\begin{align}\nonumber
\Psi(\varepsilon)&:=\int_{\Omega}A^{ijkl}(\varepsilon)\left(\frac{1}{\var}\ekl(\varepsilon)-\ekl^1\right)\left(\frac{1}{\var}\eij(\varepsilon)-\eij^1\right)\sqrt{g(\varepsilon)}dx \\ \nonumber
& \qquad+\int_{\Omega}B^{ijkl}(\varepsilon)\left(\frac{1}{\var}\dekl(\varepsilon)-\dekl^1\right)\left(\frac{1}{\var}\eij(\varepsilon)-\eij^1\right)\sqrt{g(\varepsilon)}dx
\\ \nonumber
&\quad=\int_{\Omega} f^iu_i(\varepsilon)\sqrt{g(\varepsilon)}dx +\int_{\Gamma_1} h^i u_i(\var) \sqrt{a} d\Gamma 
\\ \nonumber
&\qquad- \int_{\Omega} A^{ijkl}(\varepsilon)\left(\frac{2}{\var}\ekl(\varepsilon)-\ekl^1\right)\eij^1\sqrt{g(\varepsilon)}dx
\\ \label{psi4}
&\qquad + \int_{\Omega} B^{ijkl}(\varepsilon)\left(\dekl^1\eij^1 - \frac{1}{\var}\frac{\d}{\d t}(\ekl(\var)\eij^1)\right)\sqrt{g(\varepsilon)}dx.
\end{align}
We have that,
 \begin{align}\nonumber
&\int_{\Omega}A^{ijkl}(\varepsilon)\left(\frac{1}{\var}\ekl(\varepsilon)-\ekl^1\right)\left(\frac{1}{\var}\eij(\varepsilon)-\eij^1\right)\sqrt{g(\varepsilon)}dx \\ \nonumber
& \qquad+\frac{1}{2}\frac{\d}{\d t}\int_{\Omega}B^{ijkl}(\varepsilon)\left(\frac{1}{\var}\ekl(\varepsilon)-\ekl^1\right)\left(\frac{1}{\var}\eij(\varepsilon)-\eij^1\right)\sqrt{g(\varepsilon)}dx ={\Psi}(\varepsilon), \ae.
 \end{align}
Integrating over the interval $[0,T]$, using (\ref{elipticidadB_eps}) and (\ref{condicion_inicial_def}) we find that
  \begin{align}\nonumber
\int_0^T\left(\int_{\Omega}A^{ijkl}(\varepsilon)\left(\frac{1}{\var}\ekl(\varepsilon)-\ekl^1\right)\left(\frac{1}{\var}\eij(\varepsilon)-\eij^1\right)\sqrt{g(\varepsilon)}dx\right) dt
  \quad\leq \int_{0}^{T}{\Psi}(\varepsilon)dt,
  \end{align}
Now, by (\ref{elipticidadA_eps}) and (\ref{g_acotado})
 \begin{align}\nonumber
  &{C}_e^{-1}g_0^{1/2}\sum_{i,j}\left|\frac{1}{\var}\eij(\varepsilon)-\eij^1\right|^2_{0,\Omega}
   \leq\int_{\Omega}A^{ijkl}(\varepsilon)\left(\frac{1}{\var}\ekl(\varepsilon)-\ekl^1\right)\left(\frac{1}{\var}\eij(\varepsilon)-\eij^1\right)\sqrt{g(\varepsilon)}dx.
  \end{align}
Therefore, together with the previous inequality leads to
  \begin{align}\label{ref1}
  &{C}_e^{-1}g_0^{1/2}\int_0^T\left(\sum_{i,j}\left|\frac{1}{\var}\eij(\varepsilon)-\eij^1\right|^2_{0,\Omega}\right)dt \leq \int_0^T \Psi(\varepsilon) dt.
  \end{align}
Let $\varepsilon\rightarrow 0$. Taking into account the weak convergences studied in $(i)$ and the asymptotic behaviour of the functions  $A^{ijkl}(\varepsilon),B^{ijkl}(\varepsilon)$ (see Theorem \ref{Th_comportamiento asintotico}) and $g(\varepsilon)$ (see Theorem \ref{Th_simbolos2D_3D}), we find that 
 \begin{align}\nonumber 
 \Psi:=&\lim_{\varepsilon\rightarrow 0}\Psi(\var)=\int_{\Omega} f^iu_i\sqrt{a}dx +\int_{\Gamma_1} h^i u_i \sqrt{a} d\Gamma
 \\ \label{psi2}
 & \quad -\int_{\Omega}A^{ijkl}(0)\ekl^1\eij^1\sqrt{a}dx - \int_{\Omega}B^{ijkl}(0)\dekl^1\eij^1\sqrt{a}dx,
 \end{align}
  $\ae.$ By the expressions of $A^{ijkl}(0)$ and $B^{ijkl}(0)$ (see Theorem \ref{Th_comportamiento asintotico}) and that $\eatres^1=0$ (see step $(iii)$), we have that
  \begin{align}\nonumber
   &\intO A^{ijkl}(0)\ekl^1\eij^1\sqrt{a} dx + \intO B^{ijkl}(0)\dekl^1\eij^1 \sqrt{a} dx \\ \nonumber
   & \quad = \intO \left( \lambda a^{\alpha \beta} a^{\sigma \tau} + \mu (a^{\alpha \sigma}a^{\beta \tau} + a^{\alpha \tau}a^{\beta\sigma})  \right) \est^1 \eab^1 \sqrt{a} dx 
    + \intO \lambda a^{\alpha\beta}\edtres^1 \eab^1\sqrt{a} dx
    \\ \nonumber
    & \qquad  + \intO\left(\lambda a^{\sigma\tau} \est^1 + (\lambda + 2\mu)\edtres^1 \right)\edtres^1 \sqrt{a} dx
   \\ \nonumber
   & \qquad + \intO \left( \theta a^{\alpha \beta} a^{\sigma \tau} + \frac{\rho}{2} (a^{\alpha \sigma}a^{\beta \tau} + a^{\alpha \tau}a^{\beta\sigma})  \right) \dest^1\eab^1 \sqrt{a} dx
   + \intO \theta a^{\alpha\beta}\dedtres^1\eab^1\sqrt{a} dx 
     \\ \nonumber
       & \qquad  + \intO\left(\theta a^{\sigma\tau} \dest^1 + (\theta + \rho)\dedtres^1 \right)\edtres^1 \sqrt{a} dx, \ae,
   \end{align}
 which using the expressions (\ref{edtres1})--(\ref{dedtres1}) studied in $(iii)$ can be written as 
   \begin{align*}
   &\intO \left( \left(\lambda - \frac{\theta}{\theta + \rho} \left( \theta \Lambda + \lambda \right)\right)a^{\alpha\beta}a^{\sigma \tau} + \mu\ten  \right) \est^1\eab^1\sqrt{a}dx
   \\
   &\qquad + \intO \left( \frac{\theta \rho }{\theta + \rho} a^{\alpha \beta} a^{\sigma \tau}  + \frac{\rho}{2}\ten  \right) \dest^1\eab^1\sqrt{a}dx
   \\
   & \qquad - \intO \frac{\left( \theta \Lambda\right)^2}{\theta + \rho} \int_0^t e^{-k(t-s)}a^{\sigma \tau} \est^1(s) ds a^{\alpha \beta}\eab^1(t) \sqrt{a} dx,
   \end{align*}
thus,
   \begin{align} \nonumber
   \Psi&=\int_{\omega} p^i\bar{u}_i\sqrt{a}dy -\frac{1}{2}\intO \a \est^1\eab^1\sqrt{a}dx- \frac{1}{2}\intO \b \dest^1\eab^1\sqrt{a}dx 
    \\ \label{psi1}
    & \qquad+ \frac{1}{2}\int_0^te^{-k(t-s)}\intO \c\est^1(s)\eab^1(t)\sqrt{a}dx ds, \ae,
   \end{align}
 where  $\a$ , $\b$ and $\c$ denote the contravariant components of the fourth order two-dimensional  tensors, defined  in (\ref{tensor_a_bidimensional})--(\ref{tensor_c_bidimensional}). Now, recall (\ref{eab1_ups}), hence
 \begin{align}\nonumber
 &\frac{1}{2}\intO \a \est^1\eab^1\sqrt{a}dx+ \frac{1}{2}\intO \b \dest^1\eab^1\sqrt{a}dx 
     \\ \nonumber
     & \qquad- \frac{1}{2}\int_0^te^{-k(t-s)}\intO \c\est^1(s)\eab^1(t)\sqrt{a}dx ds, 
     \\ \nonumber
     &\quad=\into \a \Upsilon_{\sigma\tau}\Upsilon_{\alpha\beta}\sqrt{a}dx+ \frac{1}{3}\into \a \rst(\bar{\bu})\rab(\bar{\bu})\sqrt{a}dx+ \into \b \dot{\Upsilon}_{\sigma\tau}\Upsilon_{\alpha\beta}\sqrt{a}dx 
          \\ \nonumber
          & \qquad+  \frac{1}{3}\into \b \rst(\dot{\bar{\bu}})\rab(\bar{\bu})\sqrt{a}dx- \int_0^te^{-k(t-s)}\into \c\Upsilon_{\sigma\tau}(s)\Upsilon_{\alpha\beta}(t)\sqrt{a}dx ds
          \\\label{rel_mag2}
          & \qquad- \frac{1}{3}\int_0^te^{-k(t-s)}\into \c\rst(\bar{\bu}(s))\rab(\bar{\bu}(t))\sqrt{a}dx ds , \aes.
 \end{align}
    By the step $(iv)$, $\bar{\bu}$ verifies the equation of the Problem \ref{problema_ab}, hence from (\ref{psi1}) we find that
    \begin{align} \nonumber
      \Psi&= -\into \a \Upsilon_{\sigma\tau}\Upsilon_{\alpha\beta}\sqrt{a}dy -\into \b \dot{\Upsilon}_{\sigma\tau}\Upsilon_{\alpha\beta}\sqrt{a}dy
      \\ \label{psi3}
      & \quad + \int_0^te^{-k(t-s)}\into \c\Upsilon_{\sigma\tau}(s)\Upsilon_{\alpha\beta}(t)\sqrt{a}dx ds , \aes.
    \end{align}
    By (\ref{ref1}) we have that $\int_0^T\Psi(\var) dt \geq 0$ for all $\var>0$, then  $\int_0^T\Psi dt \geq 0$. In order to prove the opposite inequality, let us define $\mathbf{\Upsilon}:=\left(\Upsilon_{ij}\right)\in\mathbb{S}^3$, such that $\Upsilon_{\alpha\beta}$  denote the elements introduced in (\ref{eab1_ups}), $\Upsilon_{\alpha 3}:=0$ and where  $\Upsilon_{33}\in\WLo$ are defined by the expression
     \begin{align} \label{ups33}
     \Upsilon_{33}(t)= - \frac{\theta}{\theta + \rho} \left( a^{\alpha \beta }\Upsilon_{\alpha\beta}(t) + \Lambda\int_0^te^{-k(t-s)}a^{\alpha\beta}\Upsilon_{\alpha\beta}(s) ds \right),
      \end{align}
      in $\Omega$ , $\forallt$, and where $\Lambda$ and $k$ are defined  in (\ref{Lambda}) and (\ref{k}), respectively. As a consequence, we have that
      \begin{align}\label{dups33} 
      \dot{\Upsilon}_{33}(t)= - \frac{\lambda}{\theta + \rho} a^{\alpha \beta} \Upsilon_{\alpha\beta}(t)- \frac{\lambda + 2\mu}{\theta + \rho}\Upsilon_{33}(t) - \frac{\theta}{\theta + \rho}  a^{\alpha\beta}\dot{\Upsilon}_{\alpha\beta}(t).
      \end{align}
      in $\Omega$ , $\ae.$ Taking this into account and
      using the expressions of $A^{ijkl}(0)$ and $B^{ijkl}(0)$ (see Theorem \ref{Th_comportamiento asintotico}), we have that
        \begin{align}\nonumber
         &\intO A^{ijkl}(0)\Upskl \Upsij\sqrt{a} dx + \intO B^{ijkl}(0)\dUpskl \Upsij \sqrt{a} dx \\ \nonumber
         & \quad = \intO \left( \lambda a^{\alpha \beta} a^{\sigma \tau} + \mu (a^{\alpha \sigma}a^{\beta \tau} + a^{\alpha \tau}a^{\beta\sigma})  \right)\Upsst \Upsab \sqrt{a} dx 
          + \intO \lambda a^{\alpha\beta}\Upsdtres \Upsab\sqrt{a} dx
          \\ \nonumber
          & \qquad  + \intO\left(\lambda a^{\sigma\tau}\Upsst + (\lambda + 2\mu)\Upsdtres \right)\Upsdtres \sqrt{a} dx
         \\ \nonumber
         & \qquad + \intO \left( \theta a^{\alpha \beta} a^{\sigma \tau} + \frac{\rho}{2} (a^{\alpha \sigma}a^{\beta \tau} + a^{\alpha \tau}a^{\beta\sigma})  \right) \dUpsst \Upsab \sqrt{a} dx
         + \intO \theta a^{\alpha\beta} \dUpsdtres \Upsab yy \sqrt{a} dx 
           \\ \label{calc_1}
             & \qquad  + \intO\left(\theta a^{\sigma\tau}\dUpsst + (\theta + \rho)\dUpsdtres \right)\Upsdtres \sqrt{a} dx, \ae,
         \end{align}
       which using the expressions (\ref{ups33})--(\ref{dups33})  can be written as 
         \begin{align}\nonumber
         &\intO \left( \left(\lambda - \frac{\theta}{\theta + \rho} \left( \theta \Lambda + \lambda \right)\right)a^{\alpha\beta}a^{\sigma \tau} + \mu\ten  \right) \Upsst \Upsab \sqrt{a}dx
         \\\nonumber
         &\qquad + \intO \left( \frac{\theta \rho }{\theta + \rho} a^{\alpha \beta} a^{\sigma \tau}  + \frac{\rho}{2}\ten  \right) \dUpsst \Upsab \sqrt{a}dx
         \\\label{calc_2}
         & \qquad - \intO \frac{\left( \theta \Lambda\right)^2}{\theta + \rho} \int_0^t e^{-k(t-s)}a^{\sigma \tau}\Upsst (s) ds a^{\alpha \beta}\Upsab(t) \sqrt{a} dx,
         \end{align}
     thus,
   \begin{align*}
    &\intO A^{ijkl}(0)\Upskl \Upsij\sqrt{a} dx + \intO B^{ijkl}(0)\dUpskl \Upsij \sqrt{a} dx 
    \\ \nonumber
    &= \into \a \Upsilon_{\sigma\tau}\Upsilon_{\alpha\beta}\sqrt{a}dy + \into \b \dot{\Upsilon}_{\sigma\tau}\Upsilon_{\alpha\beta}\sqrt{a}dy
          \\
          & \quad - \int_0^te^{-k(t-s)}\into \c\Upsilon_{\sigma\tau}(s)\Upsilon_{\alpha\beta}(t)\sqrt{a}dy ds , \aes.
   \end{align*}
   Hence, from (\ref{psi3}) we find that,
   \begin{align}\label{psi5}
   \Psi=-\intO A^{ijkl}(0)\Upskl \Upsij\sqrt{a} dx - \intO B^{ijkl}(0)\dUpskl \Upsij \sqrt{a} dx.
   \end{align}
  Now, since the functions $\Upsilon_{\alpha\beta}$ are independent of $x_3$ and $\eij^1(0,\cdot)=0$ by (\ref{condicion_inicial_def}) and the weak convergences from $(i)$ (applying a result that can be found in Lemma 2.55, \cite{MiOchSo}), then $\rab(\bar{\bu}(0,\cdot))$ must be zero and, as a consequence, $\Upsilon_{\alpha\beta}(0,\cdot)=0.$ Moreover, by $(\ref{ups33})$ we have that $\Upsdtres(0,\cdot)=0,$ as well. Therefore, if we integrate (\ref{psi5})  over $[0,T]$ and take into account the initial conditions $\Upsilon_{ij}(0,\cdot)=0$ and the  ellipticity of tensors $\left(A^{ijkl}(0)\right)$ and $\left(B^{ijkl}(0)\right)$ (see (\ref{elipticidadA_eps0})), we find that $\int_0^T\Psi(s) ds\leq0$ so we conclude that
    \begin{align*}
    \int_0^T\Psi(s) ds=0.
    \end{align*}
      Therefore, by (\ref{ref1}) the strong convergences $\frac{1}{\var}\eij(\var)\to \eij^1$ in $\LLO$, when $\var\to0$, hold. On the other hand, if we define
     \begin{align}\nonumber
     \tilde{\Psi}(\varepsilon)&:=\int_{\Omega}A^{ijkl}(\varepsilon)\left(\frac{1}{\var}\ekl(\varepsilon)-\ekl^1\right)\left(\frac{1}{\var}\deij(\varepsilon)-\deij^1\right)\sqrt{g(\varepsilon)}dx \\ \nonumber
     & \qquad+\int_{\Omega}B^{ijkl}(\varepsilon)\left(\frac{1}{\var}\dekl(\varepsilon)-\dekl^1\right)\left(\frac{1}{\var}\deij(\varepsilon)-\deij^1\right)\sqrt{g(\varepsilon)}dx
     \\ \nonumber
     &\quad=\int_{\Omega} \fb^i\dot{u}_i(\varepsilon)\sqrt{g(\varepsilon)}dx +\int_{\Gamma_1} h^i \dot{u}_i(\var) \sqrt{a} d\Gamma
     \\ \nonumber
     & \qquad +\int_{\Omega} A^{ijkl}(\varepsilon)\left(\ekl^1\deij^1 - \frac{1}{\var}\frac{\d}{\d t}(\ekl(\var)\eij^1)\right)\sqrt{g(\varepsilon)}dx
     \\ \nonumber
     &\qquad -\int_{\Omega} B^{ijkl}(\varepsilon)\left(\frac{2}{\var}\dekl(\varepsilon)-\dekl^1\right)\deij^1\sqrt{g(\varepsilon)} dx.
     \end{align} 
     We have that,
      \begin{align}\nonumber
      \frac{1}{2}&\frac{\d}{\d t}\int_{\Omega}A^{ijkl}(\varepsilon)\left(\frac{1}{\var}\ekl(\varepsilon)-\ekl^1\right)\left(\frac{1}{\var}\eij(\varepsilon)-\eij^1\right)\sqrt{g(\varepsilon)}dx
      \\ \nonumber
      & +\int_{\Omega}B^{ijkl}(\varepsilon)\left(\frac{1}{\var}\dekl(\varepsilon)-\dekl^1\right)\left(\frac{1}{\var}\deij(\varepsilon)-\deij^1\right)\sqrt{g(\varepsilon)}dx =\tilde{\Psi}(\varepsilon), \ae.
      \end{align}
     Integrating over the interval $[0,T]$, using (\ref{elipticidadA_eps}) and (\ref{condicion_inicial_def}) we find that
       \begin{align}\nonumber
     \int_0^T\left(\int_{\Omega}B^{ijkl}(\varepsilon)\left(\frac{1}{\var}\dekl(\varepsilon)-\dekl^1\right)\left(\frac{1}{\var}\deij(\varepsilon)-\deij^1\right)\sqrt{g(\varepsilon)}dx\right) dt
       \quad\leq \int_{0}^{T}\tilde{\Psi}(\varepsilon)dt,
       \end{align}
     Now, by (\ref{elipticidadB_eps}) and (\ref{g_acotado})
      \begin{align}\nonumber
       &{C}_v^{-1}g_0^{1/2}\sum_{i,j}\left|\frac{1}{\var}\deij(\varepsilon)-\deij^1\right|^2_{0,\Omega}
        \leq\int_{\Omega}B^{ijkl}(\varepsilon)\left(\frac{1}{\var}\dekl(\varepsilon)-\dekl^1\right)\left(\frac{1}{\var}\deij(\varepsilon)-\deij^1\right)\sqrt{g(\varepsilon)}dx
       \end{align}
     Therefore, together with the previous inequality leads to
      \begin{align}\label{ref3}
       &{C}_v^{-1}g_0^{1/2}\int_0^T\left(\sum_{i,j}\left|\frac{1}{\var}\deij(\varepsilon)(t)-\deij^1(t)\right|^2_{0,\Omega}\right)dt \leq \int_0^T \tilde{\Psi}(\varepsilon) dt,
       \end{align}
     which is similar with (\ref{ref1}). Therefore, using analogous arguments as before, we find that
      \begin{align}
        \tilde{\Psi}&:=\lim_{\var\rightarrow 0} \tilde{\Psi}(\var)=\int_{\Omega} f^i\dot{u}_i\sqrt{a}dx +\int_{\Gamma_1} h^i \dot{u}_i \sqrt{a} d\Gamma
        \\
        & \qquad -\frac{1}{2}\intO \a \est^1\deab^1\sqrt{a}dx- \frac{1}{2}\intO \b \dest^1\deab^1\sqrt{a}dx 
         \\ \label{psi7}
         & \qquad+ \frac{1}{2}\int_0^te^{-k(t-s)}\intO \c\est^1(s)\deab^1(t)\sqrt{a}dx ds, \ae.
        \end{align}
      Now, following similar arguments as before, taking into account the time derivative of the relation (\ref{eab1_ups}) and taking $\beeta=\dot{\bar{\bu}}$  in (\ref{ubarra}), we find that 
       \begin{align} \nonumber
            \tilde{\Psi}&= -\into \a \Upsilon_{\sigma\tau}\dUpsab\sqrt{a}dy -\into \b \dUpsst\dUpsab\sqrt{a}dy
            \\ \label{psi6}
            & \quad + \int_0^te^{-k(t-s)}\into \c\Upsilon_{\sigma\tau}(s)\dUpsab(t)\sqrt{a}dy ds , \aes.
          \end{align}
      Again, taking into account the definition of the tensor $\mathbf{\Upsilon}:=(\Upsij)\in\mathbb{S}$ where $\Upsilon_{\alpha 3}=0$ and $\Upsilon_{3 3}$ given by (\ref{ups33}) and repeating analogous calculations as in (\ref{calc_1})--(\ref{calc_2}) we find that
       \begin{align*}
        &\intO A^{ijkl}(0)\Upskl \dUpsij\sqrt{a} dx + \intO B^{ijkl}(0)\dUpskl \dUpsij \sqrt{a} dx 
        \\ \nonumber
        &= \into \a \Upsilon_{\sigma\tau}\dUpsab\sqrt{a}dy + \into \b \dot{\Upsilon}_{\sigma\tau}\dUpsab\sqrt{a}dy
              \\
              & \quad - \int_0^te^{-k(t-s)}\into \c\Upsilon_{\sigma\tau}(s)\dUpsab(t)\sqrt{a}dy ds , \aes.
       \end{align*}
       Hence, from (\ref{psi7}) we find that,
       \begin{align}\label{psi8}
       \tilde{\Psi}=-\intO A^{ijkl}(0)\Upskl \dUpsij\sqrt{a} dx - \intO B^{ijkl}(0)\dUpskl \dUpsij \sqrt{a} dx.
       \end{align}
      Therefore, if we integrate (\ref{psi8})  over $[0,T]$ and take into account the initial conditions $\Upsilon_{ij}(0,\cdot)=0$ and the  ellipticity of tensors $\left(A^{ijkl}(0)\right)$ and $\left(B^{ijkl}(0)\right)$ (see (\ref{elipticidadA_eps0})), we find that $\int_0^T\tilde{\Psi}(s) ds\leq0$ so we conclude that
        \begin{align*}
        \int_0^T\tilde{\Psi}(s) ds=0.
        \end{align*}
          Therefore, by (\ref{ref3}) the strong convergences $\frac{1}{\var}\deij(\var)\to \deij^1$ in $\LLO$, when $\var\to0$, hold. Thus, we conclude that  $\frac{1}{\var}\eij(\var)\to \eij^1$ in $\WLO$, when $\var\to0$.  In particular, this convergence implies that $\frac{1}{\var}\eij(\var)\to \eij^1$ in $L^2(\Omega)$ for all $t\in[0,T].$ Now, taking the limit when $\var\to 0$ in the first equality of (\ref{psi4}) we find that $\Psi=0$. Hence, by using an existence and uniqueness result that can be found in Theorem 4.10 in \cite{intro2}, we can ensure that (\ref{psi3}) (with $\Psi=0$) implies that $\Upsab(t)=0$ for all $t\in[0,T]$. This also leads to,
          \begin{align*}
          \eab^1=-x_3\rab(\bar{\bu}),
          \end{align*}
      this is, the functions $\eab^1$ are unequivocally determined, since $\bar{\bu}$ is unique (it is the unique solution of the Problem \ref{problema_ab}, by Theorem \ref{Th_exist_unic_bid_cero}). As a consequence, by the relations found in $(iii)$ we obtain that the functions $e_{i||3}$ are also unique.

   \item {\em The strong convergence $\bu(\var)\to\bu$ in $\WHOt$ holds}.
   
   Since $\frac{1}{\var}\eij(\varepsilon) \rightarrow\eij^1 $ in $\WLO$ by the step $(v)$ we have that
   \begin{align*}
   \frac{1}{\var}\d_3\eij(\varepsilon)\to\d_3\eij^1 \in \WHMO. 
   \end{align*}
   Then, by the Theorem \ref{Th_521} we have that
   \begin{align*}
   \left(\rab(\bu(\var)) +  \frac{1}{\var}\d_3\eij(\varepsilon)\right) \to 0 \en \WHMO.
   \end{align*} 
    Hence,
    \begin{align*}
    \rab(\bu(\var)) \to -\d_3\eij^1 \en \WHMO,
    \end{align*}
    that is, each family $\left(\rab(\bu(\var))\right)_{\var>0}$ converges in $\WHMO$,  then the conclusion follows by $(e)$ in Theorem \ref{Th_522}. Hence, the proof of the step $(a)$ of this theorem is met.
  \end{enumerate}
Therefore, the proof of the theorem is complete.
   \end{proof}
It remains to be proved an analogous result to the previous theorem but in terms of de-scaled unknowns. The convergence $\bu(\var) \to \bu$ in $\WHOt$ the Theorem \ref{Th_convergencia}, the scaling proposed in Section \ref{seccion_dominio_ind}, the de-scalings $\xi_i^\var:=\xi_i$ for each $\var>0$ and  the Theorem \ref{Th_medias} together lead to the following convergences:
\begin{align*}
\frac{1}{2\var}\int^\var_{-\var} u_i^\var dx_3^{\var} \to \xi_i  \ \textrm{in} \ \WHo.
\end{align*} 
Furthermore, we can prove the following theorem regarding the convergences of the averages of the tangential and normal components of the three-dimensional displacement vector field:
\begin{theorem}
 Assume that $\btheta\in\mathcal{C}^3(\bar{\omega};\mathbb{R}^3)$. Consider a family of viscoelastic flexural shells with thickness $2\var$ approaching zero and with each having the same  middle surface $S=\btheta(\bar{\omega})$, and let the assumptions on the data be as in Theorem \ref{Th_exist_unic_bid_cero}.
 
 Let $\bu^\var=(u_i^\var)\in H^1(0,T, V(\Omega^\var))$ and $\bxi^\var=(\xi_i^\var)\in H^{1}(0,T, V_F(\omega))$ respectively denote for each $\var>0$ the solutions to the three-dimensional and two-dimensional Problems \ref{problema_eps} and \ref{problema_ab_eps}. Moreover, let $\bxi=(\xi_i)\in H^{1}(0,T, V_F(\omega))$ denote the solution to the Problem \ref{problema_ab}. Then we have that
 \begin{align*}
 \xi_\alpha^\var=\xi_\alpha \ \textrm{and thus} \ \xi_\alpha^\var\ba^\alpha=\xi_\alpha\ba^\alpha \ \textrm{in} \ H^{1}(0, T, H^1(\omega)),
  \forall \var>0,
  \\
  \frac{1}{2\var}\int^\var_{-\var} u_\alpha^\var \bg^{\alpha,\var} dx_3^\var \to \xi_\alpha \ba^\alpha \ \textrm{in} \ H^{1}(0, T, H^1(\omega)) \ \textrm{as} \ \var \to 0,
 \end{align*}
 and
  \begin{align*}
  \xi_3^\var=\xi_3 \ \textrm{and thus} \ \xi_3^\var\ba^3=\xi_3\ba^3 \ \textrm{in} \ H^{1}(0, T; H^2(\omega)),
   \forall \var>0,
   \\
   \frac{1}{2\var}\int^\var_{-\var} u_3^\var \bg^{3,\var} dx_3^\var \to \xi_3 \ba^3 \ \textrm{in} \ H^{1}(0, T; H^1(\omega)) \ \textrm{as} \ \var \to 0.
  \end{align*}
\end{theorem}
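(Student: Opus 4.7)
The plan is to reduce the assertions to Theorem \ref{Th_convergencia}, together with the definition of the de-scalings and a change of variable in the transversal direction. First, the equality $\xi_i^\var = \xi_i$ is essentially by definition of the de-scaling introduced just before the theorem: Problem \ref{problema_ab_eps}, after dividing through by $\var^3$, uses the rescaled tensors $\aeps,\beps,\ceps$ built so that it coincides with Problem \ref{problema_ab} (with the same right-hand side $p^{i}$), and the uniqueness statement of Theorem \ref{Th_exist_unic_bid_cero} then identifies the two solutions. This immediately delivers the products $\xi_\alpha^\var\ba^\alpha=\xi_\alpha\ba^\alpha$ and $\xi_3^\var\ba^3=\xi_3\ba^3$ in the stated spaces.

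For the two convergence statements, I would begin by performing the change of variable $x_3^\var=\var x_3$, combined with the scaling $u_i^\var(t,\bx^\var)=u_i(\var)(t,\bx)$ recalled in Section \ref{seccion_dominio_ind}. This gives
\begin{align*}
\frac{1}{2\var}\int_{-\var}^\var u_i^\var(t,\cdot,x_3^\var)\,dx_3^\var \;=\; \frac{1}{2}\int_{-1}^1 u_i(\var)(t,\cdot,x_3)\,dx_3 \;=\; \bar{u}_i(\var)(t,\cdot).
\end{align*}
By Theorem \ref{Th_convergencia}$(a)$, $\bu(\var)\to\bu$ in $\WHOt$, and by part $(b)$ of Theorem \ref{Th_medias} applied componentwise, the transversal averaging operator is continuous from $\WHO$ into $\WHo$; hence $\bar{u}_i(\var)\to\bar{u}_i$ in $H^1(0,T;H^1(\omega))$. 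By parts $(ii)$ and $(iv)$ of the proof of Theorem \ref{Th_convergencia}, $\bar{\bu}$ coincides with the unique solution $\bxi$ of Problem \ref{problema_ab}, so $\bar{u}_i=\xi_i$.

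To finish, I would multiply the displayed identity by the contravariant basis field $\bg^{i,\var}$ (one index $i$ at a time, no summation). The hypothesis $\btheta\in\mathcal{C}^3(\bar{\omega};\mathbb{R}^3)$, together with the definition (\ref{bTheta}) of $\bTheta$, guarantees that $\bg^{i,\var}$ is well defined in $\mathcal{C}^1(\bar{\Omega}^\var;\mathbb{R}^3)$ for $\var\leq\var_0$ and that $\bg^{i,\var}\to \ba^i$ uniformly on $\bar\omega$ with $O(\var)$ remainders, together with their tangential derivatives $\d_\alpha\bg^{i,\var}\to\d_\alpha\ba^i$. A standard product estimate in $H^1(\omega)$, applied in the Bochner space $H^1(0,T;H^1(\omega))$, then allows one to pass to the limit in $\bar{u}_i(\var)\bg^{i,\var}$, producing the limit $\xi_i\ba^i$ in $H^1(0,T;H^1(\omega))$ and yielding the two announced convergences.

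The main obstacle I anticipate is the clean handling of the product $\bar{u}_i(\var)\bg^{i,\var}$ at the level of Bochner norms, and in particular the asymmetry between the indices: for $i=\alpha$ the limit $\xi_\alpha$ already sits in $H^1(\omega)$, whereas for $i=3$ the limit $\xi_3$ lies in $H^2(\omega)$ but, since the contravariant basis fields only inherit $\mathcal{C}^1$-regularity from $\btheta\in\mathcal{C}^3$, the product $\xi_3\ba^3$ is asserted only in $H^1(\omega)$. Once uniform $\mathcal{C}^1(\bar\omega)$ bounds on $\bg^{i,\var}-\ba^i$, of order $O(\var)$, are obtained by direct differentiation of (\ref{bTheta}) and use of Theorem \ref{Th_simbolos2D_3D}, the product-rule estimate reduces everything to the already established strong convergence $\bar{u}_i(\var)\to\xi_i$ in $H^1(0,T;H^1(\omega))$, and the proof closes.
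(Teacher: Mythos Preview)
Your overall strategy coincides with the paper's, but there is a small technical gap in the tangential case. You propose to first average in $x_3$ to obtain $\bar{u}_i(\var)$ and then ``multiply the displayed identity by the contravariant basis field $\bg^{i,\var}$''. For $i=\alpha$ this is not legitimate: the field $\bg^{\alpha,\var}(\bx^\var)$ genuinely depends on $x_3^\var$ (indeed $\bg^{\alpha,\var}=\ba^\alpha+x_3^\var\d_\alpha\ba_3$ to leading order), so it cannot be factored out of the transversal integral, and the product $\bar{u}_\alpha(\var)\,\bg^{\alpha,\var}$ is not even a well-defined function on $\omega$. Consequently the quantity $\frac{1}{2\var}\int_{-\var}^\var u_\alpha^\var\bg^{\alpha,\var}\,dx_3^\var$ is \emph{not} equal to $\bar{u}_\alpha(\var)\bg^{\alpha,\var}$, and your product-rule argument as written does not apply.

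The paper's fix is exactly the decomposition you are missing: write
\[
\frac{1}{2\var}\int_{-\var}^\var u_\alpha^\var\bg^{\alpha,\var}\,dx_3^\var-\xi_\alpha\ba^\alpha
=\frac{1}{2}\int_{-1}^1 u_\alpha(\var)\bigl(\bg^\alpha(\var)-\ba^\alpha\bigr)\,dx_3+\bigl(\overline{u_\alpha(\var)}-\xi_\alpha\bigr)\ba^\alpha,
\]
where now $\ba^\alpha$ is $x_3$-independent and can be pulled through the average. The second term is handled precisely by your argument (Theorem~\ref{Th_convergencia} plus Theorem~\ref{Th_medias}(b)); the first term uses $\bg^\alpha(\var)\to\ba^\alpha$ in $\mathcal{C}^1(\bar\Omega)$ together with boundedness of $u_\alpha(\var)$ in $\WHO$, then averaging. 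For the normal component the paper exploits the simpler identity $\bg^{3,\var}=\ba^3$ (independent of $x_3^\var$), so there your approach works verbatim and no splitting is needed.
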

\begin{proof}
Since $\btheta\in\mathcal{C}^3(\bar{\omega};\mathbb{R}^3)$ the vector fields $\bg^\alpha(\var): \bar{\Omega}\longrightarrow \mathbb{R}^3$ defined by $\bg^\alpha(\var):=\bg^{\alpha,\var}(\bx^\var)$ for all $\bx^\var=\pi(\bx)\in \bar{\Omega}^\var$ are such that $\bg^\alpha(\var)- \ba^\alpha= O(\var)$, where the fields $\ba^\alpha$ have been identified with vector fields defined over the whole set $\bar{\Omega}$. Now we have that,
\begin{align*}
&\frac{1}{2 \var} \int_{-\var}^\var u_\alpha^\var\bg^{\alpha, \var} dx_3^\var - \xi_\alpha^\var \ba^\alpha= \frac{1}{2}\int^{1}_{-1} u_\alpha(\var)\bg^\alpha(\var)dx_3 - \xi_\alpha\ba^\alpha
\\ 
&\quad= \frac{1}{2}\int^1_{-1}u_\alpha(\var)(\bg^\alpha(\var)-\ba^\alpha) dx_3 - (\overline{u_\alpha(\var)} - \xi_\alpha)\ba^\alpha.
\end{align*}
On one hand, since $u_\alpha(\var)\to u_\alpha$ in $H^{1}(0,T;H^1(\Omega))$ and $\bg^\alpha(\var)\to \ba^\alpha$ in $\mathcal{C}^1(\bar{\Omega})$ imply that
\begin{align*}
u_\alpha(\var)(\bg^\alpha(\var)-\ba^\alpha)\to 0 \ \textrm{in} \ H^{1}(0,T;H^1(\Omega)),
\end{align*}
hence, applying Theorem \ref{Th_medias} $(b)$ we have that
\begin{align*}
\frac{1}{2}\int_{-1}^{1} u_\alpha(\var)(\bg^\alpha(\var)-\ba^\alpha) dx_3 \to 0 \ \textrm{in} \ H^{1}(0,T;H^1(\omega)),
\end{align*}
and by using the same argument we have that $(\overline{u_\alpha(\var)} - \xi_\alpha)\ba^\alpha \to 0$ in $H^{1}(0,T;H^1(\omega))$. For the normal components we have that $\bg^{3,\var}=\ba^3$, then
\begin{align*}
&\frac{1}{2 \var} \int_{-\var}^\var u_3^\var\bg^{3, \var} dx_3^\var - \xi_3^\var \ba^3=  (\overline{u_3(\var)} - \xi_3)\ba^3,
\end{align*}
hence applying Theorem \ref{Th_medias} $(a)$ we have that $(\overline{u_3(\var)} - \xi_3)\ba^3 \to 0$ in $H^{1}(0,T;H^1(\omega))$.
\end{proof}

\begin{remark}
The fields $\tilde{\bxi}_T^\var, \tilde{\bxi}_N^\var: [0,T]\times \bar{\omega}\longrightarrow\mathbb{R}^3$ defined by $\tilde{\bxi}_T^\var:=\xi_\alpha^\var\ba^\alpha$ and $\tilde{\bxi}_N^\var:=\xi_3^\var\ba^3$, are known as the limit tangential and normal displacement fields, respectively, of the middle surface $S$ of the shell. If we denote the limit displacement field of $S$ by $\tilde{\bxi}^\var:=\xi_i\ba^i$ then $\tilde{\bxi}^\var=\tilde{\bxi}_T^\var+\tilde{\bxi}_N^\var.$
\end{remark}

\section{Conclusions} \label{conclusiones}

We have found and mathematically justified a model for  viscoelastic shells in the particular case of the so-called flexural shells. To this end we used the insight provided by the  asymptotic expansion method (presented in our previous work \cite{intro2}) and we have justified this approach by obtaining convergence theorems.

The main novelty that this model presented  is a long-term memory, represented by an integral on the time variable, more specifically
\begin{align*}
 M(t,\beeta)=\int_0^te^{-k(t-s)}\into \c \rst(\bxi(s))\rab(\beeta)\sqrt{a}dyds , 
\end{align*}
for all $\beeta\in V_F(\omega)$. An analogous behaviour has been found in beam models for the bending-stretching of viscoelastic rods \cite{AV}, obtained by using asymptotic methods as well.  Also, this kind of viscoelasticity has been described in \cite{DL,Pipkin}, for instance.

As the viscoelastic case differs from the elastic case on time dependent constitutive law and external forces, we must consider the possibility that these models and the convergence result generalize the elastic case (studied in \cite{Ciarlet4b}). However, analogously to the asymptotic analysis made in \cite{intro2}, the reader can easily check, when the ordinary differential equation (\ref{ecuacion_casuistica}) was presented, we  had to use assumptions that make it impossible to include the elastic case.  Hence, the viscoelastic and elastic problems must be treated separately in order to reach reasonable and justified conclusions.

In this paper we have presented the convergence results concerning the models for the so-called  viscoelastic flexural shells where we assumed that $V_F(\omega)\neq \{\bcero\}$. Concerning the remaining cases where the space $V_F(\omega)$ reduces only to the zero element, in \cite{eliptico} and \cite{generalizada} we present the corresponding mathematical justifications of the models known as viscoelastic membrane shell problems. In the first one \cite{eliptico,eliptico2},   we consider a family of shells where each one as the same elliptic middle surface $S$ and the boundary condition is considered in the whole lateral face of the shell. This set of problems will be known as the viscoelastic elliptic membrane shells. In the later one \cite{generalizada}, we shall consider the remaining cases where one of these hypothesis does not verify but still $V_F(\omega)= \{\bcero\}$. This set of problems will be known as the viscoelastic generalized membrane shells.

\section*{Acknowledgements}
{\footnotesize \noindent This research was partially supported by Ministerio de Econom\'ia y Competitividad of Spain, under the grant MTM2016-78718-P, with the participation of FEDER.}
\section*{References}

\end{document}